\newcolumntype{L}{>{\centering\arraybackslash}m{1.5cm}}
\numberwithin{equation}{section}
\newtheorem{theorem}{Theorem}
\newtheorem{lemma}{Lemma}
\newtheorem{proposition}{Proposition}
\newtheorem{corollary}{Corollary}
\theoremstyle{definition}
\newtheorem{definition}[theorem]{Definition}
\newtheorem{examples}[theorem]{Example}
\theoremstyle{remark}
\newcommand{\RN}[1]{\uppercase\expandafter{\romannumeral#1}}
\newcommand{\Rn}[1]{\romannumeral#1\relax}
\begin{document}
\title[Growth Rates of Sublinear FDEs]{Growth Rates of Sublinear Functional and Volterra Differential Equations}

\author{John A. D. Appleby}
\address{School of Mathematical
Sciences, Dublin City University, Glasnevin, Dublin 9, Ireland}
\email{john.appleby@dcu.ie} \urladdr{webpages.dcu.ie/\textasciitilde applebyj}

\author{Denis D. Patterson}
\address{School of Mathematical
Sciences, Dublin City University, Glasnevin, Dublin 9, Ireland}
\email{denis.patterson2@mail.dcu.ie} \urladdr{sites.google.com/a/mail.dcu.ie/denis-patterson}

\thanks{Denis Patterson is supported by the Government of Ireland Postgraduate Scholarship Scheme operated by the Irish Research Council under the project GOIPG/2013/402.} 
\subjclass[2010]{Primary: 34K25; Secondary: 34K28.}
\keywords{Functional differential equations, Volterra equations, asymptotics, subexponential growth, bounded delay, unbounded delay}
\date{\today}

\begin{abstract}
This paper considers the growth rates of positive solutions of scalar nonlinear functional and Volterra differential equations. 
The equations are assumed to be autonomous (or asymptotically so), and the nonlinear dependence grows less rapidly than any linear function. We impose extra regularity properties on a function asymptotic to this nonlinear function, rather than on the nonlinearity itself. The main result of the paper demonstrates that the growth rate of the solution can be found by determining the rate of growth of a trivial functional differential equation (FDE) with the same nonlinearity and all its associated measure concentrated at zero; the trivial FDE is nothing other than an autonomous nonlinear ODE. We also supply direct asymptotic information about the solution of the FDE under additional conditions on the nonlinearity, and exploit the theory of regular variation to sharpen and extend the results. 
\end{abstract}
\maketitle

\section{Introduction}
We study the asymptotic behaviour of unbounded solutions of the following (asymptotically autonomous convolution) functional differential equation
\begin{align}\label{functional}
x'(t) &= \int_{[0,\tau]}\mu_1(ds)f(x(t-s)) + \int_{[0,t]} \mu_2(ds)f(x(t-s)), \quad t>0;\\ \nonumber
 x(t) &= \psi(t), \quad t \in [-\tau,0], \quad \tau \in (0,\infty),
\end{align}
where $\mu_1, \mu_2$ are positive finite measures, $f$ is a positive continuous nonlinear function, and $\psi$ is a positive continuous function. By the Riesz representation theorem, \eqref{functional} is equivalent to $x'(t)=L([f(x)]_t)$, $t>0$ where $L$ is a positive continuous linear functional from $C([-\tau,\infty);\mathbb{R}^+)$ to $\mathbb{R}^+$.
Growth estimates on the solutions of such nonlinear convolution--type equations have attracted much attention 
(see e.g., Lipovan \cite{lipovan2008,lipovan2009}, and Schneider \cite{schneider1982}).

We will assume, with a view to applications in mathematical economics, that $f$ is sublinear in the sense that $f(x)/x\to 0$ as $x\to\infty$. Under these conditions, solutions of \eqref{functional} will grow but will not exhibit finite time blow up; more precisely $x \in C([-\tau,\infty);(0,\infty))$ but $\lim_{t\to\infty}x(t)=\infty$. We may then ask whether the asymptotic growth rates of solutions to \eqref{functional} can be captured in a meaningful way. Our main results provide sufficient conditions under which the solutions of \eqref{functional} have essentially the same asymptotic behaviour as the related autonomous ordinary differential equation
\begin{equation}\label{ODE}
y'(t) = M f(y(t)), \quad t>0;\quad y(0)= \psi, \quad M:=\int_{[0,\tau]} \mu_1(ds) + \int_{[0,\infty)} \mu_2(ds).
\end{equation}
Indeed, defining $F$ by 
\[
F(x)=\int_1^x \frac{1}{f(u)}\,du, \quad x >0,
\]
the sublinearity of $f$ implies that $F(x)\to\infty$ and our most general results show that 
\[
\lim_{t\to\infty} \frac{F(x(t))}{Mt}=1,
\]
which, under strengthened conditions can sometimes be improved to give direct asymptotic information in the form
\[
\lim_{t\to\infty} \frac{x(t)}{F^{-1}(Mt)}=1.
\] 
Since  
\[
y(t)=F^{-1}(Mt+F(\psi)), \quad t\geq 0,
\]
we automatically have that $\lim_{t\to\infty} F(y(t))/t=M$. Also
\[
 \lim_{t\to\infty} \frac{y(t)}{F^{-1}(Mt)}=1.
\]
This is because the sublinearity of $f$ implies $y'(t)/y(t)\to 0$ as $t\to\infty$: this in turn implies that $y(Mt+c)/y(Mt)\to 1$ as $t\to\infty$ for any $c\in\mathbb{R}$, which implies the claim by choosing $c=-F(\psi)$. Therefore, the asymptotic results for $x$ mirror those for $y$ precisely. 

In order to determine explicit first order representations for the asymptotic behaviour of $y(t)$ as $t\to\infty$, it is only necessary to determine the large time behaviour of $F$ and $F^{-1}$, and this is precisely what is needed to determine explicit first order representations for the asymptotic behaviour of $x(t)$ as $t\to\infty$. Therefore, in this sense, finding the asymptotic behaviour of \eqref{functional} \emph{reduces exactly to the related problem for the ODE}.    

The asymptotic theory of equations such as \eqref{functional} is intimately related to the upper bound estimates furnished by inequalities of the Bellman-Bihari-Gronwall type (cf.~\cite{bihari1956generalization}) and in some sense this paper addresses the question of when these estimates are asymptotically sharp. The literature on such inequalities is vast, and important results are given in several monographs (e.g., Lakshmikantham and Leela \cite{lakleela12}, and Pachpatte \cite{ames1997inequalities}). In \cite{ames1997inequalities}, Pachpatte provides myriad examples of differential inequalities and their applications to the qualitative theory of differential equations, including to linear integro-differential equations similar to \eqref{functional}. However, we note that these results rely heavily on the nonlinear function being monotonically increasing and generally only provide upper estimates on the size of solutions. Indeed much of the literature on growth bounds and estimates involves monotone hypotheses; these are of course natural when trying to establish uniqueness of solutions of dynamical systems since the nonlinearity can be interpreted as a modulus of continuity, which enjoys natural monotonicity properties. 

In fact, if we assume that $f$ is \emph{increasing}, and that 
$F(x)\to\infty$ as $x\to\infty$, immediate integration of \eqref{functional} leads directly to an \emph{upper} inequality of Bihari type, and therefrom an estimate of the form 
\[
F(x(t))\leq C+Mt, \quad t\geq 0
\]
for some $C>0$. This immediately yields 
\[
\limsup_{t\to\infty} \frac{F(x(t))}{Mt}\leq 1.
\]
However, this result does not indicate whether the estimate is in any sense sharp for non--trivial functional differential equations (although it is certainly so for sublinear ODEs). It is well--known that the estimate from the Bihari--Gronwall--Bellman approach cannot be sharp when $f$ is linear, as exact Liapunov exponents -- which do not coincide with those resulting from the Gronwall inequality -- are given by solving the characteristic equation (see e.g., \cite{gripenberg1990volterra}). Therefore, it is of evident interest to develop corresponding \emph{lower} inequalities on the solution of \eqref{functional} with a view to investigating the quality of the upper bounds generated by the standard theory. An excellent paper which addresses such lower bounds in Volterra integral equations is that of Lipovan~\cite{lipovan2006}; in the present work we develop suitable lower inequalities for the solutions of our equations which in fact highlight the sharp character of the bounds achieved in \cite{lipovan2006}. We are assisted in this task by the integro--differential character of \eqref{functional}. Furthermore, the standard approach does not seem to address the situation in which $f$ is \emph{decreasing}. To a certain degree, the main new contributions of this work are to furnish \emph{sharp} lower estimates, to relax the hypothesis that $f$ increases, and to obtain simplified but precise limiting behaviour, rather than explicit and global bounds on the solution. Indeed, as we are in any case studying differential systems, we find it sometimes reasonable not to integrate \eqref{functional}, in part to prevent the destruction of useful information about the solution, and this leads to a different line of attack from the aforementioned integral equation theory.

In fact, in this work we obtain exact asymptotic estimates of the growth rates of solutions to \eqref{functional}. In the linear case, as noted above, the exact asymptotic behaviour is known, and the upper estimate is not sharp. If $f$ grows sufficiently more rapidly than linearly, finite time blow--up of solutions is possible. Therefore, we confine our attention to the case when $f$ is sublinear.   A characterisation of sublinearity which seems mild is that $f$ is asymptotic to a function which has zero derivative as we approach infinity. Sublinear equations of this type were studied by Appleby et al. \cite{subexpRV} with a single delay term but this analysis relies on the theory of regular variation. Other works which give growth estimates for sublinear functional differential equations include Graef~\cite{graef}, and Kusano and Onose~\cite{KO}. In this article we impose the more general hypothesis of asymptotic monotonicity on the nonlinear function $f$; both the increasing and decreasing cases are addressed. This generality allows us to easily recover the results for the case of regular variation. However, our analysis reveals that relaxing either the increasing or decreasing hypothesis completely makes estimation of a sharp growth bound difficult. In the case when $f$ is slowly varying at infinity we can still achieve results but the unbounded delay case is challenging. Only under additional hypotheses on $f$ can we obtain exact asymptotics in this case. 

A primary motivation for this work is to give a platform to deduce growth and fluctuation properties for deterministically and stochastically perturbed functional and Volterra equations of the form 
\begin{align} \label{eq.stochperturbed}
dX(t)&= \left(\int_{[0,\tau]}\mu_1(ds)f(X(t-s)) + \int_{[0,t]} \mu_2(ds)f(X(t-s)) + h(t)\right)\,dt \\ 
&\qquad\qquad+ dZ(t), \quad t\geq 0, \nonumber\\
X(t)&=\psi(t), \quad t\in [-\tau,0], \nonumber
\end{align}
(where $Z$ is a semimartingale with appropriate asymptotic properties). Systems with the same qualitative features as those present in \eqref{eq.stochperturbed} find applications in the endogenous growth theory of mathematical economics and in particular in vintage capital models.

The inclusion of general finite measures in \eqref{eq.stochperturbed} is a key feature for applications to vintage capital as it allows both demographic and structural delay effects to be captured; the work of Benhabib and Rustichini \cite{benhabib1991vintage} is an excellent early exemplar of how Volterra equations with general measures can be used to model non--exponential depreciation of capital, and effects such as ``learning by doing'' and time--to--build lags. These ideas, and variants thereof, have subsequently been developed in both the economic and mathematical literature. d'Albis et al. \cite{d2014multiple} (and the references contained therein) provide a more up to date overview of the development of such models and the associated mathematical machinery. We note that the aforementioned literature is primarily focused on models involving equations which are linear in the state variable.

The second notable qualitative feature of equation \eqref{eq.stochperturbed} is the inclusion of a sublinear nonlinearity; this arises naturally in economic models as a consequence of the so--called law of diminishing returns. In the context of endogenous growth models, Jones \cite{jones1995r} explains the crucial need to incorporate non--unit returns to scale in order to eliminate unrealistic scale effects (see also \cite{jones1995time}). The most common sublinearities in the economic literature are those of power type and our treatment of the case of regular variation is therefore especially pertinent in this context. The work of Lin and Shampine \cite{lin2014finite} provides a recent example of the intersection of endogenous growth theory and the theory of functional differential equations. Building on the framework of Jones and Williams \cite{jones2000too}, Lin and Shampine present a model of finite length patents in a decentralised economy which gives rise to a complex system of functional differential equations with sublinear state--dependent terms.    

Finally, the inclusion of a deterministic state--independent term $h$ in \eqref{eq.stochperturbed} serves to model underlying trends in the external environment, while the semimartingale term models exogenous and uncertain pertubations to the system. Interesting examples of appropriate semimartingales include 
\[
Z_1(t)=\int_0^t \sigma(s)\,dB(s), \quad Z_2(t)=Y_\alpha(t)
\] 
where $B$ is standard Brownian motion, and $Y_\alpha$ is an $\alpha$--stable L\'evy process (see Bertoin \cite{bertoin1998levy} for further details). These allow us to  model respectively systems subject to exogeneous, persistent time--dependent shocks, or systems perturbed by erratic, and potentially large, shocks. The state--independence of $Z$ in our examples make \eqref{eq.stochperturbed} reminiscent of a continuous--time nonlinear time series model subject to white noise perturbations (cf. Brockwell and Lindner \cite{brockwell2009existence}, or Marquardt and Stelzer \cite{marquardt2007multivariate}). 

Given that the asymptotic properties of \eqref{eq.stochperturbed} are surely conditioned by the relative sizes of $h$, the large deviations in $Z$, and the growth in the unperturbed system arising from $f$ and $\mu_{1,2}$, it is obvious that a thorough understanding of the asymptotic behaviour in the unperturbed case is an essential ingredient in developing good asymptotic results for solutions of \eqref{eq.stochperturbed}. This is especially true for the important question as to what size of the stochastic shocks will fundamentally change the growth rate of the shock--free system. Furthermore, it is hard to conceive of a good asymptotic theory for systems with state--dependence stochastic forcing (such as 
\begin{multline*}
dX(t)= \left(\int_{[0,\tau]}\mu_1(ds)f(X(t-s)) + \int_{[0,t]} \mu_2(ds)f(X(t-s)) + h(t)\right)\,dt \\ 
+ G(t,X_t)\,dZ(t)
\end{multline*}
where $Z$ is a semimartingale of the type mentioned earlier, and $G$ is a functional with appropriate asymptotic and 
regularity properties) without first understanding the dynamics of \eqref{functional} and  \eqref{eq.stochperturbed}. 

We also note that for such stochastic systems, which are likely also subject to model uncertainty, there is less potential for global pathwise bounds to be of value. This further supports our emphasis on asymptotic results with less stringent requirements on $f$.  

The organisation of the paper is as follows: the next section introduces notation and discusses hypotheses imposed on the nonlinearity in order to guarantee existence, non--explosion and uniqueness of solutions, as well as to allow the development of comprehensive 
asymptotic results. Section \ref{general_results_section} discusses the asymptotic results that can be established by imposing the general conditions on $f$ discussed in the previous section; Section \ref{sec.RV} presents results in the important case where $f$ is a regularly varying function at infinity. Section \ref{sec.examples} outlines some examples of nonlinear functions $f$ which violate monotonicity hypotheses, but are allowed by our results. In general, the proofs of our main results are postponed to the final sections of the paper. Section \ref{sec.mainresults} deals with results where $f$ is asymptotically increasing, Section \ref{sec.decreasing.results} with the case where $f$ is asymptotically 
decreasing, and Section \ref{sec.regvarproofs} where $f$ is regularly varying. The final section contains some computations claimed in the presentation of the examples in Section \ref{sec.examples}. 
 
\section{Mathematical Preliminaries and Discussion of Hypotheses}\label{discussion}
Throughout this paper we employ the notational convention $\mathbb{R}^+:=[0,\infty)$.

We begin by defining a useful equivalence relation on the space of positive continuous functions; in essence, we consider two functions to be equivalent if they have the same leading order asymptotic behaviour.
\begin{definition}
Suppose $b,c \in C(\mathbb{R}^+;\mathbb{R}^+)$. $b$ and $c$ are said to be asymptotically equivalent if 
$\lim_{t\to\infty}b(t)/c(t)=1$. We write $b(t) \sim c(t)$ as $t\to\infty$, or sometimes $b\sim c$ for extra brevity.
\end{definition}
Occasionally, we will employ the standard Landau ``O'' and ``o'' notation. If $c$ is as above and $b\in C(\mathbb{R}^+;\mathbb{R})$, we write $b(t)=O(c(t))$ if $|b(t)|\leq Kc(t)$ for some $K\in (0,\infty)$ and $t$ sufficiently large, and $b(t)=o(c(t))$ if 
$b(t)/c(t)\to 0$ as $t\to\infty$. 

For equation \eqref{functional}, we will concentrate on the case when $\mu_1$ and $\mu_2$ are non--negative, finite Borel measures on $(\mathbb{R}^+,\mathcal{B}(\mathbb{R}^+))$; more precisely
\begin{align}\label{finite_measure}
\mu_i(E)\geq 0,\, E\in\mathcal{B}(\mathbb{R}^+), \, i = 1,2; \,\, \int_{[0,\tau]} \mu_1(ds) + \int_{[0,\infty)} \mu_2(ds) =: M \in \mathbb{R}^+.
\end{align}
Of course, in order to avoid a trivial right--hand side of \eqref{functional}, at least one of $\mu_1$ and $\mu_2$ should be 
positive measures. 

Suppose that 
\begin{align}\label{global_stable}
f \in C(\mathbb{R}^+;(0,\infty)).
\end{align}
Then the function $F:(0,\infty)\to \mathbb{R}$ by 
\begin{equation}\label{cap_F}
F(x) = \int_1^x \frac{1}{f(u)}du, \quad x>0,
\end{equation}
is well--defined and \eqref{global_stable} ensures that $F$ is strictly increasing and hence invertible. 
Throughout this paper, whenever $\mu_1$ is non--trivial, we will assume that the initial function $\psi$ is strictly positive on the initial interval since this allows us to guarantee that the solution $x$ of \eqref{functional} obeys $x(t)\to+\infty$ as 
$t \to \infty$. However, we note that this assumption does not enter into any of our arguments regarding the rate of growth and hence we can apply these arguments if it is known independently that the solution grows to $+\infty$. 

Our work concerns sublinear equations, and we pause here to describe what \emph{we} mean by sublinear in this context, as this terminology has precise meaning in other settings. We say that $f$ is \textit{sublinear} if it is dominated by every positive linear function at infinity. This means that $f$ obeys 
\begin{equation} \label{eq.fsublinear}
\lim_{x\to\infty} \frac{f(x)}{x}=0.
\end{equation}
We presently impose an additional condition on $f$ which implies sublinearity in this sense. 
We note that the continuity of $f$, together with sublinearity, imply that $f$ satisfies a global linear growth bound of the form 
\[
\text{There exists $K>0$ such that } 0<f(x)\leq K(1+x), \quad x>0.
\]
Such a condition is well--known to be sufficient to exclude the possibility that the solutions to \eqref{functional} blow up in finite time. With the possibility of blow-up excluded we note that \eqref{functional} will have a unique continuous solution if we additionally suppose that $f$ is locally Lipschitz continuous. The existence and uniqueness theory for the equations studied in this article is well-known and the interested reader can find a thorough exposition of this material in the classic text of Gripenberg, Londen and Staffans  \cite{gripenberg1990volterra}.

The non-standard mixed form of \eqref{functional} owes to the fact that the methods of this paper can be equally well applied to both bounded delay equations and Volterra equations without a forcing term on the right--hand side; for brevity we prove results for \eqref{functional} which can be immediately applied to each special class of equations as desired. We also remark that we could rewrite \eqref{functional} as a ``pure'' Volterra equation at the expense of adding an exogenous forcing term by noting that delay differential equations of the form
\begin{align}\label{DDE}
z'(t) = \int_{[0,\tau]}\mu_1(ds)f(z(t-s)),\,\,t \geq 0;\,\, z(t) = \psi(t), \,\, t \in [-\tau,0],
\end{align}
can be written as 
\begin{align}\label{Volterra_general}
z'(t) = \int_{[0,t]}\mu(ds)f(z(t-s))+ h(t), \,\, t\geq 0;\,\, z(0) = \psi(0),
\end{align}
where $\mu(E) = \mu_1(E \cap [0,\tau])$ and $h(t)= \mathbbm{1}_{[0,\tau)}(t) \int_{[t,\tau]}\mu_1(ds)f(\psi(t-s))$. Indeed our results for \eqref{functional} can be viewed as a necessary first step in understanding the \emph{growth} asymptotics of unbounded solutions of more general functional equations of the form \eqref{Volterra_general}, for an arbitrary forcing term $h$. Exact rates of \emph{decay} for forced ordinary nonlinear stochastic equations and deterministic equations, in which the state dependence is weaker than linear at equilibrium, and the perturbation can be large enough to give rise to new asymptotic behaviour, are given by Appleby and Patterson \cite{appleby2013classification,appleby2014necessary}.

In order to prove many of our results, we request some extra properties and regularity on $f$ that are not necessarily satisfied by sublinear functions as described by \eqref{eq.fsublinear}. However, we  believe that our choice of additional hypotheses on $f$ are not especially restrictive, relatively natural in the context of differential systems, and apply in a unified manner across a variety of situations. In particular, although we do not discuss the case of forced equations or stochastic functional differential equations in this paper, our additional hypotheses on $f$ are chosen with a view that good asymptotic results can still be obtained for such perturbed systems without the need for extra assumptions on the nonlinearity, even in the case when the forcing term may be very large. Finally, our hypothesis allows us to give a good characterisation of the asymptotic behaviour  without recourse to especially elaborate proofs. 
In order to specify our hypothesis on $f$ we first introduce the class of functions
\begin{multline} \label{def.smoothsublinear}
\mathcal{S}=\{\phi\in C^1((0,\infty);(0,\infty))\cap C(\mathbb{R}^+,(0,\infty)): 
\text{$\lim_{x\to\infty}\phi'(x)=0$} \\
\text{and $\phi'(x) > 0$ for all $x>0$} \,\,\}.
\end{multline}
We now make our assumption on $f$: 
\begin{equation} \label{fasym}
\text{With $\mathcal{S}$ as in \eqref{def.smoothsublinear}, there is $\phi\in \mathcal{S}$ such that $f(x) \sim \phi(x)$ as $x\to\infty$}.
\end{equation}
We see immediately (cf. Lemma~\ref{to_zero}) that \eqref{fasym} implies the sublinear property \eqref{eq.fsublinear}.

One of the principal advantages of the strengthened hypothesis \eqref{fasym} is that the extra regularity requirements, such as monotonicity and smoothness, are \emph{not imposed directly} on $f$ but rather on the auxiliary function $\phi$. This allows $f$ to have a certain irregularity without any cost. 

While \eqref{fasym} holds for large classes of sublinear functions that are commonly found in applications, it is still a strictly stronger hypothesis than sublinearity, even when $f$ is increasing. In particular, an increasing, continuously differentiable, sublinear function $f$ must have $\liminf_{x\to\infty}f'(x)=0$ but there is no guarantee that $\limsup_{x\to\infty}f'(x)=0$ and it is even possible to have 
\begin{equation} \label{eq.f'0infty}
0 = \liminf_{x\to\infty}f'(x) < \limsup_{x\to\infty}f'(x)=\infty.
\end{equation}
Indeed, our asymptotic results can still be applied to functions $f$ obeying \eqref{eq.f'0infty}, provided intervals on which $f$ has bad behaviour are relatively short. We illustrate this point more fully in Section \ref{sec.examples} with some examples.   
Properties of the derivative of $\phi$ in \eqref{fasym} allow simple application of the mean--value theorem at crucial junctures in our proofs, and bypass the potential difficulties posed by functions $f$ with pathological behaviour of the type exemplified by  
\eqref{eq.f'0infty}.

In the next section, we concentrate on equations in which $f$ obeys general sublinearity hypotheses (such as \eqref{fasym}). However, an important class of functions which overlaps with positive sublinear functions is the class of regularly varying functions, and by making the extra assumption of regular variation on $f$, we will see in Section~\ref{sec.RV} that stronger conclusions concerning asymptotic behaviour can be made. Since we mention this class in advance of Section~\ref{sec.RV}, we recall here the definition of a regularly varying function for the reader's convenience.
\begin{definition}
Suppose a measurable function $h:\mathbb{R}\mapsto (0,\infty)$ obeys
\[
\lim_{x\to\infty}\frac{h(\lambda\,x)}{h(x)} = \lambda^\beta, \mbox{ for all } \lambda>0, \mbox{ some }\beta\in\mathbb{R},
\]
then $f$ is regularly varying at infinity with index $\beta$, or $f\in \text{RV}_\infty(\beta)$.
\end{definition}
Further properties of regularly varying functions will be discussed and quoted in Section~\ref{sec.RV}.  

\section{General Results}\label{general_results_section}
We start by remarking that sublinear behaviour in $f$ implies subexponential growth in the solution $x$ of \eqref{functional}, in the sense that $x(t)$ has an instantaneous growth rate that tends to zero as $t\to\infty$, or that $x$ has a zero Liapunov exponent. The proof is elementary, and we give it immediately below.
\begin{theorem}  \label{thm.sub}
Suppose that the measures $\mu_1$ and $\mu_2$ obey \eqref{finite_measure}, $f$ obeys \eqref{global_stable}, 
\eqref{eq.fsublinear}, and $\psi\in C([-\tau,0];(0,\infty))$. Then, the unique continuous solution, $x$, of \eqref{functional} obeys $x(t)\to\infty$ as $t\to\infty$ and moreover
\begin{equation} \label{eq.thmsub1}
\lim_{t\to\infty} \frac{x'(t)}{x(t)}=0, \quad \lim_{t\to\infty} \frac{1}{t}\log x(t)=0.
\end{equation}
\end{theorem}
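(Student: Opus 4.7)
The plan is to proceed in four steps: establish positivity and monotonicity of $x$, show $x(t)\to\infty$ by contradiction, obtain $x'(t)/x(t)\to 0$ via dominated convergence, and then integrate to get the Liapunov exponent result.

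First, since $\psi > 0$ on $[-\tau,0]$ and both measures are non--negative, the right--hand side of \eqref{functional} is strictly positive while $x$ remains positive. Setting $m := \min_{[-\tau,0]} \psi > 0$, continuity of $x$ gives $x'(t) \geq 0$ for all $t$ where $x \geq m$, and therefore $x(t) \geq m$ for all $t\geq 0$ and $x$ is non--decreasing on $\mathbb{R}^+$. To show $x(t)\to\infty$, suppose for contradiction that $x(t) \leq K$ for all $t \geq 0$. Then $x(t-s) \in [m,K]$ for all relevant $(t,s)$, so by continuity and positivity of $f$, $f(x(t-s)) \geq c_0 := \min_{[m,K]} f > 0$. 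This yields
\[
x'(t) \;\geq\; c_0\bigl(\mu_1([0,\tau]) + \mu_2([0,t])\bigr),
\]
and since $\mu_2([0,t])\to \mu_2([0,\infty))$ as $t\to\infty$, the right--hand side exceeds $c_0 M/2 > 0$ eventually, forcing $x(t)\to\infty$, a contradiction.

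For the ratio $x'(t)/x(t)$, I would divide \eqref{functional} by $x(t)$:
\[
\frac{x'(t)}{x(t)} \;=\; \int_{[0,\tau]} \mu_1(ds)\,\frac{f(x(t-s))}{x(t)} + \int_{[0,t]} \mu_2(ds)\,\frac{f(x(t-s))}{x(t)}.
\]
Sublinearity of $f$ together with its continuity on $[m,\infty)$ yields a global linear bound $f(u)\leq K_0(1+u)$ for $u \geq m$. Monotonicity of $x$ gives $x(t-s) \leq x(t)$, so the integrand is dominated by $K_0(1/x(t) + 1) \leq 2K_0$ for $t$ large, uniformly in $s$. For fixed $s$, as $t\to\infty$ we have $x(t-s)\to\infty$ and
\[
\frac{f(x(t-s))}{x(t)} \;=\; \frac{f(x(t-s))}{x(t-s)}\cdot \frac{x(t-s)}{x(t)} \;\leq\; \frac{f(x(t-s))}{x(t-s)} \;\longrightarrow\; 0.
\]
For the $\mu_2$ integral, one writes it as $\int_{[0,\infty)}\mathbbm{1}_{[0,t]}(s)\,(\cdots)\,\mu_2(ds)$ so that the indicator handles the moving upper limit and eventually equals $1$ for each fixed $s$. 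Dominated convergence (using finiteness of $\mu_1$ and $\mu_2$) then drives both integrals to zero, yielding $x'(t)/x(t)\to 0$.

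Finally, since $(\log x)'(t) = x'(t)/x(t)$, given $\varepsilon >0$ pick $T$ with $x'(t)/x(t) < \varepsilon$ for $t\geq T$; integrating,
\[
\log x(t) \;\leq\; \log x(T) + \varepsilon(t-T), \qquad t\geq T,
\]
and since $x(t)\to\infty$, $\log x(t)\geq 0$ eventually, so $\limsup_{t\to\infty} (\log x(t))/t \leq \varepsilon$. Letting $\varepsilon\downarrow 0$ delivers the second limit in \eqref{eq.thmsub1}. The only mildly delicate step is applying dominated convergence to the $\mu_2$ integral over the moving domain $[0,t]$; rewriting via an indicator as above makes the argument routine.
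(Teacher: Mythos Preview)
Your proof is correct. The only substantive difference from the paper is in how you establish $x'(t)/x(t)\to 0$. The paper does not invoke dominated convergence at all: from sublinearity it extracts, for each $\epsilon>0$, a constant $L(\epsilon)$ with $f(x)<L(\epsilon)+\epsilon x$, then uses monotonicity of $x$ (so $x(t-s)\leq x(t)$) to bound the entire right--hand side of \eqref{functional} by $ML(\epsilon)+\epsilon M x(t)$, divides by $x(t)$, and lets $\epsilon\downarrow 0$. Your route instead fixes a single linear bound $f(u)\leq K_0(1+u)$ to produce a dominating constant and then pushes the limit inside the integrals via DCT, handling the moving upper limit by an indicator. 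Both arguments are valid; the paper's version is a touch more elementary (no measure--theoretic limit theorem, no moving--domain device), while yours makes the pointwise behaviour of the integrand explicit and, as a bonus, is self--contained on the monotonicity and $x(t)\to\infty$ steps that the paper defers to the proof of Theorem~\ref{increasing_con}.
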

\begin{proof}
It is shown later in the paper that \eqref{finite_measure}, \eqref{global_stable}, and the positivity of $\psi$ are sufficient 
to show that $x(t)\to\infty$ as $t\to\infty$ and indeed that $x'(t)\geq 0$ for all $t>0$. Since $f$ 
is continuous and obeys  \eqref{eq.fsublinear}, for every $\epsilon>0$ there is $L(\epsilon)>0$ such that 
$0<f(x)<L(\epsilon)+\epsilon x$ for all $x\geq 0$. Hence for $t>\tau$ we have that $x(t+s)\leq x(t)$ for all $-\tau\leq s\leq 0$, and so 
\begin{align*}
0<x'(t)
&\leq \int_{[-\tau,0]} \mu_1(ds) \left\{L(\epsilon)+\epsilon x(t+s)\right\} + \int_{[0,t]} 
\mu_2(ds)\left\{L(\epsilon)+\epsilon x(t-s)\right\} \\
& \leq L(\epsilon) M + \epsilon M x(t),
\end{align*}
where $M=\mu_1([-\tau,0])+\mu_2(\mathbb{R}^+)$. Since $x(t)\to\infty$ as $t\to\infty$, we have that $\limsup_{t\to\infty} x'(t)/x(t)\leq \epsilon M$. Since $\epsilon>0$ is arbitrary, it follows immediately that $\limsup_{t\to\infty} x'(t)/x(t)\leq 0$. Since $x$ is increasing, we 
 have $\liminf_{t\to\infty} x'(t)/x(t)\geq 0$, which proves the first part of \eqref{eq.thmsub1}. From the first part of \eqref{eq.thmsub1}, and monotonicity of $x$ we have for every $\epsilon>0$ that there is $T(\epsilon)>0$ such that $0<x'(t)/x(t)<\epsilon$. Integration of this inequality over the interval $[T(\epsilon),t]$ yields
\[
0< \log x(t)-\log x(T(\epsilon))\leq \epsilon(t-T(\epsilon)), \quad t\geq T(\epsilon).
\]
Taking limits as $t\to\infty$, and then as $\epsilon\to 0^+$, gives the second part of \eqref{eq.thmsub1}.
\end{proof}

By strengthening the sublinearity hypothesis on $f$ to \eqref{fasym}, in our first main result we show that solutions of \eqref{functional} grow like those of the autonomous ODE \eqref{ODE}, in the sense that $\lim_{t\to\infty}F(x(t))/Mt = 1$. In the statement of this result, and in the statements of later results, $F$ is the function defined by \eqref{cap_F}.
\begin{theorem}\label{increasing_con}
Suppose that the measures $\mu_1$ and $\mu_2$ obey \eqref{finite_measure}, $f$ obeys \eqref{global_stable}, \eqref{fasym}, and $\psi\in C([-\tau,0];(0,\infty))$. Then, the unique continuous solution, $x$, of \eqref{functional} obeys
\begin{equation} \label{eq.FxtMt1}
\lim_{t \to \infty}x(t) = \infty; \quad
\lim_{t \to \infty}\frac{F(x(t))}{Mt} = 1.
\end{equation}
\end{theorem}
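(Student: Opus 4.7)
Since $f>0$ and $\mu_1,\mu_2$ are non-negative, the right-hand side of \eqref{functional} is non-negative, so $x$ is non-decreasing; combined with $\psi>0$ and the positivity of $f$ (or of $\mu_1([0,\tau])$) a routine contradiction argument shows $x(t)\to\infty$, which also gives access to Theorem \ref{thm.sub} and hence $x'(t)/x(t)\to 0$. The plan is to differentiate $F(x(t))$:
\[
\frac{d}{dt}F(x(t))=\frac{x'(t)}{f(x(t))}=\frac{1}{f(x(t))}\left[\int_{[0,\tau]}f(x(t-s))\,\mu_1(ds)+\int_{[0,t]}f(x(t-s))\,\mu_2(ds)\right],
\]
show that this ratio tends to $M$, and divide by $Mt$ after integrating over $[T,t]$.

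\textbf{Upper bound.} Fix $\varepsilon>0$ and use \eqref{fasym} to choose $N$ with $(1-\varepsilon)\phi(y)\leq f(y)\leq(1+\varepsilon)\phi(y)$ for $y\geq N$, and finiteness of $\mu_2$ to choose $A$ with $\mu_2((A,\infty))<\varepsilon$. Since $\phi$ and $x$ are both increasing, $\phi(x(t-s))\leq\phi(x(t))$, so for $t$ large the parts of both integrals over which $x(t-s)\geq N$ are at most $(1+\varepsilon)M\phi(x(t))$, while the complementary parts (short arguments in the $\mu_2$ integral, handled via the tail estimate at $A$) contribute at most a constant multiple of $\varepsilon$. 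Dividing by $f(x(t))\geq(1-\varepsilon)\phi(x(t))$, integrating over $[T,t]$, dividing by $Mt$, and noting that the error term is $O(\varepsilon)$ (either because $\phi(x(t))\to\infty$ makes $\int_T^t du/\phi(x(u))=o(t)$, or else because $\phi$ is bounded below by a positive constant), we obtain $\limsup F(x(t))/(Mt)\leq(1+\varepsilon)/(1-\varepsilon)+O(\varepsilon)$. Sending $\varepsilon\to 0$ gives $\limsup\leq 1$.

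\textbf{Lower bound via a key lemma.} The crux is:
\[
\text{For every fixed } S>0,\quad \lim_{t\to\infty}\frac{\phi(x(t-S))}{\phi(x(t))}=1,
\]
which then extends uniformly to all $s\in[0,S]$ by monotonicity of $\phi\circ x$. To prove this, the mean value theorem gives
\[
0\leq \phi(x(t))-\phi(x(t-S))\leq \Bigl(\sup_{u\geq x(t-S)}\phi'(u)\Bigr)\bigl(x(t)-x(t-S)\bigr),
\]
and the supremum tends to $0$ since $\phi'(u)\to 0$ and $x(t-S)\to\infty$. For the length factor, the crude FDE upper bound $x'(u)\leq C\phi(x(u))\leq C\phi(x(t))$ for $u\leq t$ (which comes directly from monotonicity of $\phi$ and $x$, established above) yields $x(t)-x(t-S)\leq CS\,\phi(x(t))$. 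Hence $(\phi(x(t))-\phi(x(t-S)))/\phi(x(t))\leq o(1)\cdot CS\to 0$. Armed with the lemma, choose $S$ with $\mu_2((S,\infty))<\varepsilon$; then for $t$ large, $\phi(x(t-s))\geq(1-\varepsilon)\phi(x(t))$ for all $s\in[0,\max(\tau,S)]$. Together with $f(x(t-s))\geq(1-\varepsilon)\phi(x(t-s))$ and $f(x(t))\leq(1+\varepsilon)\phi(x(t))$, this gives $x'(t)/f(x(t))\geq\tfrac{(1-\varepsilon)^2}{1+\varepsilon}(M-\varepsilon)$. Integrating and dividing by $Mt$, then sending $\varepsilon\to 0$, yields $\liminf F(x(t))/(Mt)\geq 1$.

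\textbf{Main obstacle.} The technical heart is the lemma, because transferring $x(t-S)/x(t)\to 1$ through a function $\phi$ that can grow arbitrarily slowly is not automatic: bare sublinearity of $\phi$ is not enough. The decisive observation is that $x(t)-x(t-S)$ is not merely $o(x(t))$ but of order $\phi(x(t))$, which is precisely the scale at which $\phi$ varies; this is what couples the two asymptotic relations. The potential for circular reasoning must be sidestepped by using only the crude bound $x'\leq C\phi(x)$ (which requires only monotonicity, already in hand) rather than any refined asymptotic for $x'/f(x)$.
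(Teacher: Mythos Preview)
Your proof is correct and follows essentially the same route as the paper's: both establish the upper bound first via monotonicity of $\phi$ and $x$, then use that crude bound $x'(t)\le C\phi(x(t))$ together with the mean value theorem and $\phi'(x)\to 0$ to prove the key lemma $\phi(x(t-\theta))/\phi(x(t))\to 1$, from which the lower bound follows. The only cosmetic differences are that the paper works with $x'/\phi(x)$ and passes to $F$ via $\Phi\sim F$ at the end (avoiding your case split on whether $\phi(x(t))\to\infty$), and applies the mean value theorem to $t\mapsto\phi(x(t))$ rather than to $\phi$ over the spatial increment.
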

It is notable that, in contrast to linear functional differential equations with a positive measure, the rate of growth is independent 
of the distribution of the mass in the measures $\mu_1$ and $\mu_2$, but depends merely on the overall mass $M=\mu_1([0,\tau])+\mu_2(\mathbb{R}^+)$. Therefore, the growth of solutions cannot be boosted or retarded (at least in terms of the asymptotic relation prescribed in \eqref{eq.FxtMt1}) by greater weight being allocated to more recent values of the solution.  

Now we may simply state specialisations of the above result for the analogous delay and Volterra equations.
\begin{corollary}
Suppose that the measure $\mu_1$ obeys \eqref{finite_measure} with $\mu_2 \equiv 0$, $f$ obeys \eqref{global_stable}, \eqref{fasym}, and $\psi\in C([-\tau,0];(0,\infty))$. Then, the unique continuous solution, $z$, of \eqref{DDE} obeys
\[
\lim_{t \to \infty}z(t) = \infty; \quad
\lim_{t \to \infty}\frac{F(z(t))}{Mt} = 1.
\]
\end{corollary}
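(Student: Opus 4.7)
The plan is to deduce this corollary as a direct specialisation of Theorem \ref{increasing_con} by taking $\mu_2$ to be the zero measure. Observe that equation \eqref{DDE} is exactly equation \eqref{functional} under the choice $\mu_2 \equiv 0$: the Volterra integral term vanishes identically, leaving only the bounded-delay term driven by $\mu_1$. So the task reduces to verifying that the hypotheses of Theorem \ref{increasing_con} are satisfied under the weaker-looking hypotheses stated for the corollary, and then reading off the conclusion.

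First I would check each hypothesis in turn. The condition \eqref{finite_measure} is explicitly assumed for $\mu_1$, and the zero measure trivially satisfies the finiteness and non-negativity requirements for $\mu_2$, so \eqref{finite_measure} holds jointly with total mass $M = \int_{[0,\tau]} \mu_1(ds) + 0 = \mu_1([0,\tau])$, which is exactly the constant $M$ appearing in the statement of the corollary. The conditions \eqref{global_stable} on $f$, \eqref{fasym} on $f$, and the strict positivity and continuity of $\psi$ on $[-\tau,0]$ are identical in both statements.

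Second, Theorem \ref{increasing_con} yields the unique continuous solution $x$ of \eqref{functional} satisfying $x(t)\to\infty$ and $F(x(t))/(Mt)\to 1$. Since \eqref{DDE} coincides with \eqref{functional} for $\mu_2\equiv 0$, the solution $z$ of \eqref{DDE} agrees with this $x$ by uniqueness; hence $z(t)\to\infty$ and $F(z(t))/(Mt)\to 1$ as $t\to\infty$, which is the desired conclusion.

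There is no genuine obstacle here beyond a bookkeeping check, since the corollary is a pure specialisation. The only subtle point worth flagging in the write-up is that the positivity hypothesis on $\psi$ is used (via Theorem \ref{increasing_con}) to ensure $z(t)\to\infty$, since this is exactly the situation in which $\mu_1$ is nontrivial and $\mu_2$ is zero, so the initial data alone drives the solution towards infinity.
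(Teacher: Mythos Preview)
Your proposal is correct and matches the paper's own treatment: the corollary is stated immediately after Theorem~\ref{increasing_con} as a direct specialisation (set $\mu_2\equiv 0$) with no separate proof given. Your verification that the hypotheses of Theorem~\ref{increasing_con} are met and that $M$ reduces to $\mu_1([0,\tau])$ is exactly the bookkeeping check the paper leaves implicit.
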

\begin{corollary}
Suppose that the measure $\mu_2$ obeys \eqref{finite_measure} with $\mu_1 \equiv 0$, $f$ obeys \eqref{global_stable}, \eqref{fasym}, and $\psi \in (0,\infty)$. Then, the unique continuous solution, $v$, of the Volterra integro-differential equation
\begin{align}\label{VIDE}
v'(t) = \int_{[0,t]}\mu_2(ds)f(v(t-s)),\,\,t \geq 0;\,\, v(0) = \psi,
\end{align}
obeys
\[
\lim_{t \to \infty}v(t) = \infty; \quad
\lim_{t \to \infty}\frac{F(v(t))}{Mt} = 1.
\]
\end{corollary}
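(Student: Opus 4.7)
The plan is to observe that \eqref{VIDE} is nothing other than the special case of \eqref{functional} in which the bounded-delay measure $\mu_1$ vanishes identically, and thus to derive the corollary as a direct instance of Theorem~\ref{increasing_con}. First I would fix any convenient $\tau>0$ (say $\tau=1$) and extend the scalar initial value $\psi\in(0,\infty)$ to a continuous, strictly positive initial history $\tilde\psi\in C([-\tau,0];(0,\infty))$ by setting $\tilde\psi(s)\equiv\psi$ on $[-\tau,0]$. Because $\mu_1\equiv 0$, the first integral on the right-hand side of \eqref{functional} is zero and the values of $\tilde\psi$ on $[-\tau,0)$ never enter the dynamics; the extension is purely a bookkeeping device to conform to the hypotheses of Theorem~\ref{increasing_con}.

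Next I would verify that the remaining hypotheses of Theorem~\ref{increasing_con} are satisfied. The total mass condition in \eqref{finite_measure} reduces to $\int_{[0,\infty)}\mu_2(ds)=M\in\mathbb{R}^+$, which is exactly what is assumed in the corollary, and the requirements \eqref{global_stable} and \eqref{fasym} on $f$ are inherited verbatim. With this identification the unique continuous solution $x$ of \eqref{functional} with initial history $\tilde\psi$ satisfies $x(0)=\psi$ together with
\[
x'(t)=\int_{[0,t]}\mu_2(ds)f(x(t-s)),\qquad t>0,
\]
which is precisely \eqref{VIDE}; by uniqueness of solutions (guaranteed by the local Lipschitz assumption discussed in Section~\ref{discussion}), $v\equiv x$ on $[0,\infty)$.

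Finally, applying Theorem~\ref{increasing_con} to $x$ yields both $v(t)=x(t)\to\infty$ as $t\to\infty$ and $F(v(t))/(Mt)\to 1$, which are the two claimed conclusions. There is no substantive obstacle in this argument: the only subtlety is the notational mismatch between the history-less formulation of \eqref{VIDE} and the interval-based initial data required by \eqref{functional}, and this is resolved trivially by the constant extension above since $\mu_1\equiv 0$ makes the extension irrelevant to the dynamics.
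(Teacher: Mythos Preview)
Your proposal is correct and matches the paper's approach: the paper presents this corollary as an immediate specialisation of Theorem~\ref{increasing_con} without a separate proof, and your reduction (taking $\mu_1\equiv 0$, fixing any $\tau>0$, and extending the scalar initial value to a constant history) is exactly the bookkeeping needed to make that specialisation precise.
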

At this point it is natural to ask if we can hope to recover asymptotic behaviour similar to that of the solution of \eqref{ODE} if $M =+\infty$. Using the previous result and a comparison argument this can be immediately ruled out. We present this result for the solution of \eqref{functional} but this is perhaps slightly artificial. It is more natural to only consider the unbounded delay component (i.e. $\mu_1 \equiv 0$) and this is essentially how the proof proceeds.
\begin{corollary} \label{Minfinity}
Let $\mu_1$ and $\mu_2$ be non--negative measures on $(\mathbb{R}^+,\mathcal{B}(\mathbb{R}^+))$ satisfying 
\begin{align}\label{infinite_mass}
\int_{[0,\tau]} \mu_1(ds) + \int_{[0,\infty)} \mu_2(ds) = \infty.
\end{align}
Suppose further that $f$ obeys \eqref{global_stable}, \eqref{fasym}, and $\psi \in C([-\tau,0],(0,\infty))$. Then, the unique continuous solution, $x$, of \eqref{functional} obeys
\[
\lim_{t \to \infty}x(t) = \infty; \quad
\lim_{t \to \infty} \frac{F(x(t))}{t} = \infty.
\]
\end{corollary}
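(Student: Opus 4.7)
The strategy is to dominate $x$ from below, for each $N$, by the solution of a simple bounded-delay equation driven by $\phi$ with total mass $(1-\varepsilon)M_N$ where $M_N:=\mu_2([0,N])$, apply Theorem \ref{increasing_con} to that auxiliary equation, and then let $N\to\infty$. Well-posedness of \eqref{functional} forces $\mu_1([0,\tau])<\infty$ and local finiteness of $\mu_2$; since $\mu_1$ is supported on the compact set $[0,\tau]$, \eqref{infinite_mass} then entails $M_N\to\infty$ as $N\to\infty$. Since $\mu_1,\mu_2\geq 0$ and $f>0$, the solution $x$ is strictly increasing; and a direct argument rules out $x$ being bounded, for if $x(t)\leq L<\infty$ then $f(x(\cdot))\geq m:=\min_{[\psi(0),L]}f>0$, whence $x'(t)\geq m\mu_2([0,t])\to\infty$, contradicting boundedness. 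Hence $x(t)\to\infty$.

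Fix $\varepsilon\in(0,1)$ and $N>0$. By \eqref{fasym} there exists $K$ with $f(y)\geq (1-\varepsilon)\phi(y)$ for $y\geq K$, and by $x(t)\to\infty$ we may choose $T_0$ with $x(t)\geq K$ for $t\geq T_0$. For $t\geq T_0+N$, since both $x$ and $\phi$ are non-decreasing,
\begin{align*}
x'(t) &\geq \int_{[0,N]}\mu_2(ds)\,f(x(t-s)) \\
&\geq (1-\varepsilon)\int_{[0,N]}\mu_2(ds)\,\phi(x(t-s)) \\
&\geq (1-\varepsilon)M_N\,\phi(x(t-N)).
\end{align*}
Let $z$ solve the auxiliary DDE $z'(t)=(1-\varepsilon)M_N\phi(z(t-N))$ for $t>T_0+N$, with constant initial data $z\equiv x(T_0)$ on $[T_0,T_0+N]$; then $x\geq z$ on $[T_0,T_0+N]$, and since $\phi$ is strictly increasing, a standard DDE comparison (if necessary perturbing the initial datum of $z$ slightly downward and passing to the limit via continuous dependence) yields $x(t)\geq z(t)$ for all $t\geq T_0$. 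Viewing $z$ as a solution of \eqref{functional} with $\mu_1=(1-\varepsilon)M_N\delta_N$, $\mu_2\equiv 0$ and nonlinearity $\phi\in\mathcal{S}$ (which trivially satisfies \eqref{fasym} with itself), Theorem \ref{increasing_con} delivers
\[
\lim_{t\to\infty}\frac{F_\phi(z(t))}{(1-\varepsilon)M_N\,t}=1, \qquad F_\phi(u):=\int_1^u\frac{dr}{\phi(r)}.
\]

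Sublinearity of $\phi$ (inherited from \eqref{fasym}) forces $F$ and $F_\phi$ to diverge at infinity, and then $f\sim\phi$ gives $F\sim F_\phi$ by L'H\^opital's rule. Monotonicity of $F$ together with $x\geq z$ therefore delivers
\[
\liminf_{t\to\infty}\frac{F(x(t))}{t}\geq (1-\varepsilon)M_N,
\]
and since $N>0$ is arbitrary and $M_N\to\infty$, we conclude $F(x(t))/t\to\infty$. I expect the main obstacle to be the comparison step: the non-monotonicity of $f$ precludes any direct differential-inequality comparison driven by $f$ itself, so one must first use \eqref{fasym} to pass to the increasing surrogate $\phi$ (only legitimate once the solution is large, hence the need to establish $x(t)\to\infty$ at the outset), perform the DDE comparison against $\phi$, and finally transfer the conclusion back to $F$ via the asymptotic equivalence $F\sim F_\phi$.
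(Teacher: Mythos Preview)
Your proof is correct and follows essentially the same route as the paper's: truncate $\mu_2$ to a compact interval, use \eqref{fasym} to pass from $f$ to the increasing surrogate $\phi$, compare below with a bounded-delay DDE driven by $\phi$, invoke Theorem~\ref{increasing_con} on that auxiliary equation, and let the truncation level tend to infinity. The only cosmetic differences are that the paper first drops $\mu_1$ to work with the Volterra solution $v\leq x$, uses halved initial data and a halved constant (rather than your perturbation-and-limit device) to secure the strict comparison, and writes $F$ throughout where you more carefully distinguish $F_\phi$ before using $F\sim F_\phi$.
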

The result of Corollary~\ref{Minfinity} can be viewed as a extension of the result of Theorem~\ref{increasing_con} in the limit 
as $M\to\infty$. This can be seen readily by writing \eqref{eq.FxtMt1} in the form 
\[
\lim_{t\to\infty} \frac{F(x(t))}{t}=M,
\]
and now, by letting $M\to\infty$, we get the conclusion of Corollary~\ref{Minfinity}. Of course, Corollary~\ref{Minfinity} tells us 
that the solution of \eqref{functional} now grows more rapidly than that of the ordinary differential equation \eqref{ODE}.

We expect that when the total mass of the measures is infinite, in the sense that \eqref{infinite_mass} holds, this may well give rise to phenomena not captured by relatively crude results such as Corollary \ref{Minfinity}. Treating this issue in more detail will naturally require some additional information about the rate of growth to infinity of the function $M(t):= \int_{[0,t]} \mu_2(ds)$ (set $\mu_1 \equiv 0$). In this spirit, we propose to employ the theory of regular variation to address this delicate, but potentially interesting, case in future work. 

The reader may view the asymptotic relation \eqref{eq.FxtMt1} as giving rather indirect information about the asymptotic behaviour of the solution $x$ of \eqref{functional}, and we might naturally desire more direct information by determining a function $a$ such that $x(t)\sim a(t)$ as $t\to\infty$. Notice in the case of a linear equation that \eqref{eq.FxtMt1} is a statement concerning the Liapunov exponent of a scalar differential equation. Therefore, the direct information we seek constitutes a type of Hartman--Wintner result (cf. Hartman~\cite{H}, and Hartman and Wintner \cite{HW} for ordinary equations with linear leading order terms, and Pituk \cite{pituk} for functional differential equations with linear leading order terms), in contrast to \eqref{eq.FxtMt1}, which is a type of Hartman--Grobman result. A natural candidate for $a$ in this case is 
$a(t)=F^{-1}(Mt)$, and the following Proposition makes this apparent.

\begin{proposition} \label{prop.HWtype}
Let $f$ obey \eqref{fasym} and $F$ be given by \eqref{cap_F}. If $a \in C([0,\infty);(0,\infty))$ is such that $a(t)\sim F^{-1}(Mt)$ as $t\to\infty$, then 
\[
\lim_{t\to\infty} \frac{F(a(t))}{Mt}=1.
\]  
\end{proposition}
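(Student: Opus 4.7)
Set $y(t) := F^{-1}(Mt)$. Sublinearity of $f$, a consequence of \eqref{fasym}, implies $F(x) \to \infty$ as $x \to \infty$, so $y(t) \to \infty$ and therefore $a(t) \to \infty$ since $a(t)/y(t) \to 1$. The conclusion is equivalent to $F(a(t))/F(y(t)) \to 1$, and by monotonicity of $F$ it suffices to establish that for every $\epsilon > 0$ there is $\delta > 0$ such that for all $\lambda \in [1-\delta, 1+\delta]$ and all sufficiently large $y$,
\[
\left| \frac{F(\lambda y)}{F(y)} - 1 \right| \leq \epsilon;
\]
this is enough because eventually $a(t)/y(t) \in [1-\delta, 1+\delta]$, whence $F((1-\delta)y(t)) \leq F(a(t)) \leq F((1+\delta)y(t))$, and dividing by $F(y(t)) = Mt$ gives the result.

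The crucial preliminary estimate is $\liminf_{x \to \infty} f(x) F(x)/x \geq 1$. Fix $\epsilon' \in (0,1)$ and use \eqref{fasym} to choose $x_0$ so that $(1-\epsilon')\phi(u) \leq f(u) \leq (1+\epsilon')\phi(u)$ for $u \geq x_0$. Since $\phi$ is strictly increasing by \eqref{def.smoothsublinear}, $\phi(u) \leq \phi(x)$ for $u \in [x_0, x]$, and consequently
\[
F(x) \geq \int_{x_0}^{x} \frac{du}{(1+\epsilon')\phi(u)} \geq \frac{x - x_0}{(1+\epsilon')\phi(x)} \geq \frac{(1-\epsilon')(x-x_0)}{(1+\epsilon')\,f(x)}.
\]
Rearranging and letting $\epsilon' \to 0^+$ yields the claim.

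To finish, take $\lambda \geq 1$; for $u \in [y, \lambda y]$ with $y$ large, monotonicity of $\phi$ and \eqref{fasym} give $f(u) \geq (1-\epsilon')\phi(u) \geq (1-\epsilon')\phi(y) \geq \frac{1-\epsilon'}{1+\epsilon'}\,f(y)$, so
\[
0 \leq F(\lambda y) - F(y) = \int_y^{\lambda y} \frac{du}{f(u)} \leq \frac{1+\epsilon'}{1-\epsilon'} \cdot \frac{(\lambda - 1)\,y}{f(y)}.
\]
Dividing by $F(y)$ and invoking the preliminary estimate,
\[
\frac{F(\lambda y) - F(y)}{F(y)} \leq \frac{1+\epsilon'}{1-\epsilon'}\,(\lambda - 1) \cdot \frac{y}{f(y)F(y)} = \frac{1+\epsilon'}{1-\epsilon'}\,(\lambda - 1)(1 + o(1)).
\]
A symmetric bound for $\lambda \in (0, 1]$ uses $f(u) \geq \frac{1-\epsilon'}{1+\epsilon'}\,f(\lambda y)$ on $[\lambda y, y]$ together with $F(y) \geq F(\lambda y)$ and the preliminary estimate applied at $\lambda y$. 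Given $\epsilon > 0$, first choosing $\epsilon'$ small and then $\delta > 0$ so that $\frac{1+\epsilon'}{1-\epsilon'}\,\delta < \epsilon$ completes the argument.

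The principal obstacle is the preliminary estimate $f(x)F(x)/x \geq 1 + o(1)$: because $f$ itself is not assumed monotone, the argument must be routed through the auxiliary monotone $\phi$ supplied by \eqref{fasym}. Once this key lemma is in hand, the rest amounts to a routine perturbative mean-value estimate exploiting the asymptotic monotonicity of $\phi$ on intervals of the form $[y, (1\pm\delta)y]$.
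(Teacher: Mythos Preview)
Your proof is correct, and the core idea is the same as the paper's: establish that the integrated function preserves asymptotic equivalence of diverging arguments by controlling the ratio at nearby multiplicative scales, with the key input being the estimate $\limsup_{x\to\infty} x/(f(x)F(x))\leq 1$ (equivalently, your preliminary lower bound on $f(x)F(x)/x$). Where you and the paper differ is in packaging. The paper introduces the auxiliary $\Phi(x)=\int_1^x du/\phi(u)$, whose derivative $1/\phi$ is genuinely decreasing; it proves a clean reusable lemma (Lemma~\ref{asym_equiv_copy}) for any $\varphi$ with $\varphi'>0$ decreasing and tending to zero, deducing $\limsup \varphi(\Lambda x)/\varphi(x)\le \Lambda$, applies it with $\varphi=\Phi$, and then transfers to $F$ via $F\sim\Phi$. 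You instead work directly with $F$, whose derivative $1/f$ need not be monotone, and route the necessary monotonicity through $\phi$ at each step. Your approach avoids introducing $\Phi$ at the expense of repeating the $\epsilon'$-sandwich $(1-\epsilon')\phi\le f\le(1+\epsilon')\phi$ in every estimate; the paper's approach is more modular but requires the extra asymptotic transfer $F\sim\Phi$ at the end. Note also that your preliminary estimate is exactly the bound \eqref{Lliminf}, which the paper states and proves in the body of Section~\ref{general_results_section} by the same argument you give.
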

As indicated earlier, the proof is given at the end. 

Therefore, Proposition \ref{prop.HWtype} shows that natural direct asymptotic information about the solution gives stronger asymptotic information than the relation \eqref{eq.FxtMt1}. Consequently, it is reasonable to ask if we can impose easily--checked and natural sufficient conditions on  the nonlinear function $f$ so that this can be done. 
The following result gives such conditions under which Theorem \ref{increasing_con} can be appropriately strengthened. 
\begin{theorem}\label{F_inv}
Suppose that the measures $\mu_1$ and $\mu_2$ obey \eqref{finite_measure}, $f$ obeys \eqref{global_stable}, \eqref{fasym}, and $\psi\in C([-\tau,0];(0,\infty))$. Suppose moreover that 
\begin{align}\label{L}
\limsup_{x \to \infty}\frac{f(x)\,F(x)}{x} := L < \infty,
\end{align}
Then, the unique continuous solution, $x$, of \eqref{functional} obeys \eqref{eq.FxtMt1} and \emph{a fortiori}
\begin{equation} \label{eq.xdirectasy}
\lim_{t \to \infty}\frac{x(t)}{F^{-1}(Mt)}=1.
\end{equation}
\end{theorem}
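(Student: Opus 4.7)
The plan is to promote the Hartman--Grobman type conclusion $F(x(t))/Mt \to 1$ of Theorem~\ref{increasing_con} to the direct asymptotic \eqref{eq.xdirectasy} by a mean value argument, with hypothesis \eqref{L} supplying the quantitative control that converts $F$-level convergence into a sharp ratio estimate for $x(t)$ and $a(t):=F^{-1}(Mt)$.

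First I would set $a(t):=F^{-1}(Mt)$, so that $F(a(t))=Mt$ and $a(t)\to\infty$ as $t\to\infty$. Theorem~\ref{increasing_con} then gives
\[
|F(x(t))-F(a(t))| \;=\; \epsilon(t)\,F(a(t)), \qquad \epsilon(t)\to 0.
\]
Applying the mean value theorem to $F$ on the interval with endpoints $a(t)$ and $x(t)$ (and using $F'=1/f$) produces $\xi(t)$ between them with
\[
|x(t)-a(t)| \;=\; f(\xi(t))\,|F(x(t))-F(a(t))| \;=\; f(\xi(t))\,\epsilon(t)\,F(a(t)).
\]

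Next I would control $f(\xi(t))$ by invoking \eqref{fasym}. Since $\phi\in\mathcal{S}$ is strictly increasing and $f\sim\phi$, for any fixed $\delta\in(0,1)$ one has $(1-\delta)\phi(s)\le f(s)\le(1+\delta)\phi(s)$ for $s$ large. Setting $m(t):=\max(x(t),a(t))$ and using $\xi(t)\le m(t)$, this gives $f(\xi(t))\le \frac{1+\delta}{1-\delta}\,f(m(t))$ for $t$ large. Because $F$ is increasing, $F(a(t))\le F(m(t))$, and hypothesis \eqref{L} supplies $f(m(t))F(m(t))\le (L+1)\,m(t)$ eventually. Combining these estimates yields
\[
|x(t)-a(t)| \;\le\; K\,\epsilon(t)\,m(t), \qquad K:= \tfrac{1+\delta}{1-\delta}(L+1).
\]
Bounding $m(t)\le a(t)+|x(t)-a(t)|$ and absorbing the latter term on the left once $K\epsilon(t)<1/2$ gives $|x(t)-a(t)|/a(t) \le 2K\epsilon(t)\to 0$, which is exactly \eqref{eq.xdirectasy}.

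The main obstacle is the lack of any direct monotonicity of $f$ on the interval where the MVT locates $\xi(t)$: without monotone control, $f(\xi(t))$ could a priori behave pathologically relative to $f$ at the endpoints. Hypothesis \eqref{fasym} is tailored precisely for this difficulty, transferring the needed one-sided control to the smooth, increasing comparison function $\phi$, while \eqref{L} plays the complementary role of preventing $F$ from growing too slowly, which is what closes the self-referential inequality in the final step.
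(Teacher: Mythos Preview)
Your argument is correct. It takes a somewhat different route from the paper's proof, though the two share the same essential ingredients.

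The paper first isolates a preliminary lemma (Lemma~\ref{F_inv_suff}) showing that, under \eqref{L}, for all small $\epsilon>0$ one has
\[
1 < \frac{F^{-1}((1+\epsilon)t)}{F^{-1}(t)} < \frac{1}{1-\frac{\epsilon(1+\epsilon)}{1-\epsilon}L}
\]
for $t$ large. This is proved by writing $F^{-1}((1+\epsilon)t)-F^{-1}(t)=\int_t^{(1+\epsilon)t} f(F^{-1}(s))\,ds$, bounding $f$ by $(1+\epsilon)\phi$ and using monotonicity of $\phi$, and then invoking \eqref{L}. Theorem~\ref{F_inv} is then immediate: from $F(x(t))/Mt\to 1$ one sandwiches $x(t)/F^{-1}(Mt)$ between $F^{-1}((1\pm\epsilon)Mt)/F^{-1}(Mt)$ and lets $\epsilon\to 0$.

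You instead apply the mean value theorem to $F$ on the interval with endpoints $x(t)$ and $a(t)=F^{-1}(Mt)$, obtaining $|x(t)-a(t)|=f(\xi(t))\,\epsilon(t)\,F(a(t))$, then use \eqref{fasym} to push $f(\xi(t))$ up to $f(m(t))$ via the increasing $\phi$, apply \eqref{L} at $m(t)=\max(x(t),a(t))$, and close with the absorption step $m(t)\le a(t)+|x(t)-a(t)|$. This is more direct: it bypasses the standalone ratio lemma for $F^{-1}$ and handles the self-referential nature of the estimate (the unknown $x(t)$ appearing on both sides) in one stroke. The paper's route has the minor advantage that Lemma~\ref{F_inv_suff} is a reusable statement about $F^{-1}$ itself, independent of the solution $x$; your route trades that modularity for brevity. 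Both arguments hinge identically on \eqref{fasym} supplying monotone control of $f$ near the mean-value point and on \eqref{L} converting $f\cdot F$ into a linear bound.
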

If $f$ is linear we know independently that $z(t)/F^{-1}(Mt)$ does not have zero limit once $\mu_1(\{0\}) + \mu_2(\{0\}) < M$, or in other words, once \eqref{functional} is a true FDE. However, \eqref{L} is merely a sufficient condition to ensure that $z(t) \sim F^{-1}(Mt)$ as $t \to \infty$. This point is elaborated in some detail in the authors recent article \cite{subexp}. 

These caveats notwithstanding \eqref{L} is still a useful condition since it is relatively sharp and does not make overly stringent restrictions on $f$. For example, if 
\begin{multline} \label{eq.fpowerbounded}
\text{There is $\epsilon\in (0,1)$ such that $x\mapsto f(x)/x^{1-\epsilon}$ }\\
\text{is asymptotic to a decreasing function},
\end{multline}
then condition \eqref{L} holds. To see this, let $\varphi$ be the function asymptotic to $x\mapsto f(x)/x^{1-\epsilon}$. Then there is $x_1>1$ such that $x\geq x_1$ implies $\varphi(x)/2 < f(x)/x^{1-\epsilon}<2\varphi(x)$. Then, as $\varphi(u)>\varphi(x)$ for $u<x$, we get for $x\geq x_1$ that
\[
\frac{f(x)}{x}\int_{x_1}^x \frac{1}{f(u)}\,du
\leq \frac{2\varphi(x) x^{1-\epsilon}}{x} \int_{x_1}^x \frac{2}{\varphi(u)u^{1-\epsilon}}\,du
\leq 4 x^{-\epsilon} \int_{x_1}^x \frac{1}{u^{1-\epsilon}}\,du\leq \frac{4}{\epsilon}.
\]  
This gives \eqref{L}, because $x\mapsto f(x)/x$ is bounded on $[1,\infty)$, and therefore so is 
$x\mapsto f(x)/x \cdot \int_1^{x_1} du/f(u)$.

The validity of \eqref{L} within the class of regularly varying functions also casts light on its utility. For example, for any $f \in \text{RV}_\infty(\beta)$ for $\beta \in (0,1)$, \eqref{L} holds: this is a very large class of sublinear functions satisfying \eqref{fasym}. However, if $f \in \text{RV}_\infty(1)$, Karamata's Theorem (cf.~\cite[Theorem 1.5.11]{BGT}) yields 
\[
\lim_{x \to \infty}\frac{f(x)\,F(x)}{x} =\infty,
\]
and so \eqref{L} does not hold in this case. This shows that we cannot relax \eqref{eq.fpowerbounded} to allow $\epsilon=0$.

We make one final remark concerning the condition \eqref{L}. Since $f(x)/x\to 0$ as $x\to\infty$, we have $F(x)\to \infty$ as $x\to\infty$: therefore the possibility arises that $L$ in \eqref{L} could be zero. However, if $f$ obeys 
\eqref{fasym}, we have that $L\geq 1$ and in fact
\begin{equation} \label{Lliminf}
\liminf_{x\to\infty} \frac{F(x)f(x)}{x} \geq 1.
\end{equation}
This is readily seen: by \eqref{fasym}, for every $\epsilon\in (0,1)$, there is $x_1(\epsilon)>0$ 
such that $(1-\epsilon)\phi(x)<f(x)<(1+\epsilon)\phi(x)$ for $x\geq x_1(\epsilon)$, where $\phi\in \mathcal{S}$ 
and so is increasing. Hence for $x_1(\epsilon)\leq u\leq x$ we have 
\[
  f(u)<(1+\epsilon)\phi(u)<(1+\epsilon)\phi(x) < \frac{1+\epsilon}{1-\epsilon}f(x).
\]
Therefore for $x\geq x_1(\epsilon)$
\[
F(x)=F(x_1(\epsilon))+\int_{x_1(\epsilon)}^x \frac{1}{f(u)}\,du \geq F(x_1(\epsilon))+\frac{1-\epsilon}{1+\epsilon}\cdot\frac{x-x_1(\epsilon)}{f(x)}. 
\]
Multiplying by $f(x)/x$, using the fact that this tends to zero as $x\to\infty$, and then taking limits 
as $x\to\infty$, and then as $\epsilon\to 0^+$, we arrive at \eqref{Lliminf}.

\indent Our next result shows that when $f$ is asymptotically decreasing solutions of \eqref{functional} obey $x(t) \sim F^{-1}(Mt)$ as $t \to \infty$ with no additional hypotheses on $f$.
\begin{theorem}\label{decreasing_con}
Suppose that the measures $\mu_1$ and $\mu_2$ obey \eqref{finite_measure}, $f\in C(\mathbb{R}^+;(0,\infty))$ is asymptotic to a decreasing function $\phi \in C^1((0,\infty);(0,\infty))\cap C([0,\infty);(0,\infty))$ and $\psi \in C([-\tau,0];(0,\infty))$. Then, the unique continuous solution, $x$, of 
\eqref{functional} obeys
\[
\lim_{t \to \infty}x(t) = \infty; \quad 
\lim_{t \to \infty}\frac{x(t)}{F^{-1}(Mt)} = 1.
\]
\end{theorem}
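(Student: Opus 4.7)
I would organise the proof into three stages: qualitative setup, sharp lower bound, and matching upper bound, with the upper bound being the main obstacle.

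\textbf{Setup.} Non-negativity of $\mu_1$, $\mu_2$ and positivity of $\psi$ and $f$ make the right-hand side of \eqref{functional} non-negative, so $x$ is non-decreasing on $[0, \infty)$. If $x$ were bounded, $f(x(\cdot))$ would be bounded below by a positive constant on the compact range, forcing $x'(t) \geq c M' > 0$ eventually and hence $x(t) \to \infty$, a contradiction. Thus $x(t) \to \infty$, and since $\phi$ is positive decreasing (hence bounded), $f$ is sublinear in the sense of \eqref{eq.fsublinear} and Theorem \ref{thm.sub} yields $x'(t)/x(t) \to 0$.

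\textbf{Lower bound.} Given $\epsilon \in (0, 1)$, choose $A$ so that $|f(y)/\phi(y) - 1| < \epsilon$ for $y \geq A$, and $T_1$ so $x(T_1) \geq A$. For $t$ large and every $s$ with $x(t - s) \geq A$, monotonicity of $x$ and decrease of $\phi$ give
\[
f(x(t - s)) \geq (1 - \epsilon) \phi(x(t - s)) \geq (1 - \epsilon) \phi(x(t)) \geq \frac{1 - \epsilon}{1 + \epsilon} f(x(t)).
\]
This applies for every $s \in [0, \tau]$ in the $\mu_1$-integral and every $s \in [0, t - T_1]$ in the $\mu_2$-integral; the Volterra tail $s \in (t - T_1, t]$ is discarded by positivity. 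Since $\mu_2([0, t - T_1]) \to \mu_2(\mathbb{R}^+)$ as $t \to \infty$, I obtain $x'(t) \geq (1 - \epsilon_1) M f(x(t))$ eventually, with $\epsilon_1 \downarrow 0$ as $\epsilon \downarrow 0$. Dividing by $f(x(t))$ and integrating yields $\liminf_{t\to\infty} F(x(t))/(M t) \geq 1$, and the fact that $x'(t)/x(t) \to 0$ makes $F^{-1}(M t + c)/F^{-1}(M t) \to 1$ for any bounded $c$ (as in Proposition \ref{prop.HWtype}), lifting this to $\liminf_{t\to\infty} x(t)/F^{-1}(M t) \geq 1$.

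\textbf{Upper bound and main obstacle.} The upper bound is the crux: with $\phi$ decreasing and $x$ increasing, the ratio $\phi(x(t - s))/\phi(x(t)) \geq 1$ can be arbitrarily large, so I cannot simply reverse the previous argument. I would split each integral at an age $T$. On the near parts ($s \in [0, \tau]$ for $\mu_1$, $s \in [0, T]$ for $\mu_2$), the relation $x'(t) \to 0$ --- which follows from $x' \leq M f^*$ bounded together with self-regulation via the lower bound --- gives $x(t) - x(t - s) \to 0$ uniformly, and continuity of $\phi$ then yields $\phi(x(t - s))/\phi(x(t)) \to 1$; these contributions are asymptotic to $(\mu_1([0, \tau]) + \mu_2([0, T])) f(x(t))$. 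On the far Volterra tail $s \in (T, t]$, the integrand is uniformly bounded by $f^* = \sup f < \infty$ (finite since $\phi$ is bounded), and $\mu_2((T, \infty))$ can be made arbitrarily small. The key making this tail negligible relative to $f(x(t))$ is the observation that $\phi$ decreasing forces
\[
\int_1^x \frac{du}{\phi(u)} \leq \frac{x - 1}{\phi(x)}, \quad x > 1,
\]
so that combined with $f \sim \phi$ one has $\limsup_{x \to \infty} f(x) F(x)/x \leq 1$, that is, condition \eqref{L} holds automatically. Assembling the pieces yields $\limsup_{t\to\infty} x'(t)/f(x(t)) \leq M$, which combined with the lower bound gives $F(x(t))/(M t) \to 1$; the automatic validity of \eqref{L} then upgrades this to $x(t)/F^{-1}(M t) \to 1$ by the same calculus argument that underpins Theorem \ref{F_inv}.
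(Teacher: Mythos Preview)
Your lower bound is fine and essentially matches the paper's. The upper bound, however, has a genuine gap that breaks precisely in the case $\phi(x)\to 0$ as $x\to\infty$, which is the interesting one.

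You aim to show $\limsup_{t\to\infty} x'(t)/f(x(t))\leq M$ by splitting the Volterra integral at age $T$. For the far tail you bound the integrand by $f^\ast=\sup f$ and the mass by $\mu_2((T,\infty))$. But when you then divide by $f(x(t))$ you obtain a term $\mu_2((T,\infty))\,f^\ast/f(x(t))$; if $\phi(x)\to 0$ then $f(x(t))\to 0$ and this quotient tends to $+\infty$ for every fixed $T$, so no choice of $T$ makes it small. Your appeal to condition \eqref{L} does not help here: \eqref{L} controls $f(x)F(x)/x$, not $1/f(x(t))$, and there is no route from it to the smallness of $f^\ast/f(x(t))$. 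The near-part claim has a parallel defect: from $x(t)-x(t-s)\to 0$ and continuity of $\phi$ you can conclude $\phi(x(t-s))-\phi(x(t))\to 0$ only under extra regularity (e.g.\ bounded $\phi'$), and even then the \emph{ratio} $\phi(x(t-s))/\phi(x(t))$ need not tend to $1$ when the denominator $\phi(x(t))$ itself tends to $0$. So neither piece of your upper-bound estimate is controlled after division by $f(x(t))$.

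The paper avoids dividing by $f(x(t))$ altogether. It first records the pointwise lower bound $x(t-s)\geq \Phi^{-1}(\Phi_\epsilon+M_\epsilon(t-s-2T))$ (from the lower-bound step), uses the decrease of $\phi$ to replace $\phi(x(t-s))$ by the explicit function $(\phi\circ\Phi^{-1})(\Phi_\epsilon+M_\epsilon(t-s-2T))$, and then \emph{integrates $x'(t)$ directly} over $[3T,t]$. Because $(\Phi^{-1})'=\phi\circ\Phi^{-1}$, these time-integrals collapse to differences of $\Phi^{-1}$, yielding
\[
x(t)\leq C_\epsilon+\frac{(1+\epsilon)M}{M_\epsilon}\,\Phi^{-1}\bigl(\Phi_\epsilon+M_\epsilon(t-2T)\bigr),
\]
and the far-tail piece integrates to a bounded constant rather than a term that must be compared to $f(x(t))$. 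Dividing by $\Phi^{-1}(Mt)$ and invoking the elementary estimates $\Phi^{-1}(A+Bt)/\Phi^{-1}(Bt)\to 1$ and $\Phi^{-1}((1+\epsilon)t)/\Phi^{-1}(t)\leq 1/(1-\epsilon)$ then gives $\limsup x(t)/\Phi^{-1}(Mt)\leq 1$. Your observation that \eqref{L} holds automatically for decreasing $\phi$ is correct and is indeed what underlies those last two estimates, but its role is at the \emph{end} (lifting ``$F$-scale'' information to ``$x$-scale''), not in controlling the tail term against $f(x(t))$.
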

We notice that there is no restriction on how rapidly $f$ may decrease in Theorem~\ref{decreasing_con}, in contrast
to the restriction on sublinear increase in $f$ in Theorem~\ref{increasing_con}. Before concluding the section, we give a simple example showing an application of Theorem~\ref{decreasing_con}.

\begin{examples}  
Consider the Volterra equation  
\[
x'(t)=af(x(t))+\int_0^t \frac{1}{(1+t-s)^{\theta+1}} f(x(s))\,ds, \quad t>0; \quad x(0)=\psi>0,
\]
where $a\geq 0$, $\theta>0$ and $f:[0,\infty)\to (0,\infty)$ is locally Lipschitz continuous with 
$f(x)\sim e^{-\alpha x}$ as $x\to\infty$ for $\alpha>0$. The conditions ensure a unique positive continuous solution, and indeed, as $f$ is asymptotic to a decreasing function, we see that all the hypotheses of 
Theorem~\ref{decreasing_con} apply, with 
\[
M=a+\int_0^\infty \frac{1}{(1+u)^{1+\theta}}\,du=a+\frac{1}{\theta}, \quad F(x)\sim \int_1^x e^{\alpha u}\,du=:\Phi(x), \quad \text{as $x\to\infty$}.
\]
It remains to determine explicitly the asymptotic behaviour of $F^{-1}(x)$ as $x\to\infty$. 
Since $\Phi(x)=(e^{\alpha x} -1)/\alpha$, it follows that 
\[
\Phi^{-1}(x)= \frac{1}{\alpha}\log(1+\alpha x).
\]
Therefore $F^{-1}(x)\sim \Phi^{-1}(x)\sim (\log x)/\alpha$ as $x\to\infty$ (see Lemma~\ref{FinvPhiinv}), and so 
by Theorem~\ref{decreasing_con} we get
\begin{equation} \label{eq.xdecexampleasy1}
x(t)\sim F^{-1}(Mt)\sim \frac{1}{\alpha}\log(Mt)\sim \frac{1}{\alpha}\log t
  \quad \text{ as $t\to\infty$}.
\end{equation}

If $f(x)\sim x^{-\beta}$ as $x\to\infty$ for $\beta>0$, we can carry out similar calculations to get 
\[
F^{-1}(x)\sim \left((\beta+1)x\right)^{1/(1+\beta)} \text{ as $x\to\infty$},
\]
so
\begin{equation} \label{eq.xdecexampleasy2}
x(t)\sim F^{-1}(Mt)\sim \left\{(\beta+1) \left(a+\frac{1}{\theta}\right)\right\}^{1/(1+\beta)} t^{1/(1+\beta)}
  \quad \text{ as $t\to\infty$}.
\end{equation}
\end{examples}

We close this section by noting that we can generalise the results presented here to the case of finitely many nonlinear functions, in the spirit of Pinto \cite{pinto1990integral}. Consider the following functional equation with finitely many distinct nonlinearities and measures present
\begin{align}\label{finitely_many}
x'(t) &= \sum_{j=1}^{N_1}\int_{[0,\tau]} \bar{\mu}_j(ds)\bar{f}_j(x(t-s)) + \sum_{j=1}^{N_2}\int_{[0,t]} \underline{\mu}_j(ds)\underline{f}_j(x(t-s)), \,\, t>0,\\
x(t) &= \psi(t), \,\, t \in [-\tau,0],\,\, \tau>0, \nonumber
\end{align}
where we interpret the relevant sum as zero in the case that either $N_1$ or $N_2$ are zero. We also assume that $N_1,\,N_2 \in \mathbb{N}$ with $\max(N_1,\,N_2)>0$ to avoid trivialities. There is no loss of generality in assuming that the finite measures $\bar{\mu}_j$ are defined on the same compact interval $[0,\tau]$, for if the support of the measures were different, we could simply take $\tau$ to be the largest among the support lengths, and extend measures to the common support by setting them to be zero off their natural support. To deal with an equation of the form \eqref{finitely_many} using the framework and methods of this paper we will naturally require analogues of conditions \eqref{finite_measure} and \eqref{fasym}. Hence, with $\mathcal{S}$ defined by \eqref{def.smoothsublinear}, we will suppose that the nonlinear functions $\bar{f}_1,\dots,\bar{f}_{N_1},\underline{f}_1,\dots,\underline{f}_{N_2}$ obey
\begin{align}
&\text{There exists }\phi \in \mathcal{S}\text{ such that }\bar{f}_j(x) \sim \bar{\lambda}_j\,\phi(x) \text{, for } j \in 1,\dots,N_1 \text{ and}\nonumber\\
&\underline{f}_j(x) \sim \underline{\lambda}_j\,\phi(x) \text{, for } j \in 1,\dots,N_2, \text{ with } \bar{\lambda}_1, \dots, \bar{\lambda}_{N_1}, \underline{\lambda}_1, \dots, \underline{\lambda}_{N_2} \in \mathbb{R}^+,
\label{fasym_mult}
\end{align}
and that the measures $\bar{\mu}_1,\dots,\bar{\mu}_{N_1},\underline{\mu}_1,\dots,\underline{\mu}_{N_2}$ obey
\begin{align}\label{finite_mult_measures}
\sum_{j=1}^{N_1}\int_{[0,\tau]} \bar{\mu}_j(ds)\mathbbm{1}_{\{\bar{\lambda}_j>0\}} + \sum_{j=1}^{N_2}\int_{[0,\infty)} \underline{\mu}_j(ds)\mathbbm{1}_{\{\underline{\lambda}_j>0\}} =: M \in (0,\infty),
\end{align}
with $\bar{\lambda}_1,\dots,\bar{\lambda}_{N_1},\underline{\lambda}_1,\dots,\underline{\lambda}_{N_2}$ defined by \eqref{fasym_mult}. The positivity of $M$ ensures that there is at least one nonlinearity with leading order behaviour $\phi$, and that all other nonlinearities have no faster rate of growth.  

In essence, the following theorem says that one only need consider the fastest growing nonlinearity if several nonlinear functions and measures are present, and apply our earlier results to this reduced equation. Without much work the above results can be used to prove the following theorem: accordingly, we state it without proof. 
\begin{theorem}\label{Thm.incr.mult}
Let $\bar{\mu}_1,\dots,\bar{\mu}_{N_1},\underline{\mu}_1,\dots,\underline{\mu}_{N_2}$ be non-negative Borel measures satisfying \eqref{finite_mult_measures}. Suppose further that\\ $\bar{f}_1,\dots,\bar{f}_{N_1},\underline{f}_1,\dots,\underline{f}_{N_2} \in C(\mathbb{R}^+;(0,\infty))$ satisfy \eqref{fasym_mult} with 
\[
\min(\bar{\lambda}_1,\dots,\bar{\lambda}_{N_1},\underline{\lambda}_1,\dots,\underline{\lambda}_{N_2})>0
\] 
and $\psi \in C([-\tau,0];(0,\infty))$. Then, the unique continuous solution, $x$, of \eqref{finitely_many}
obeys
\[
\lim_{t \to \infty}x(t) = \infty, \quad \lim_{t \to \infty}\frac{F(x(t))}{Mt} = 1,
\]
where $\lambda:= \bar{\lambda}_1 + \dots + \bar{\lambda}_{N_1}+ \underline{\lambda}_1 + \dots + \underline{\lambda}_{N_2}$, $f(x) := \lambda\,\phi(x)$, $M$ is defined by \eqref{finite_mult_measures} and $F$ is defined by \eqref{cap_F}.
\end{theorem}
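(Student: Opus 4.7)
The plan is to reduce Theorem~\ref{Thm.incr.mult} to the single-nonlinearity Theorem~\ref{increasing_con} by aggregating the measures with the weights supplied by \eqref{fasym_mult}. The central observation is that since each $\bar{f}_j(x)\sim\bar{\lambda}_j\phi(x)$ and each $\underline{f}_j(x)\sim\underline{\lambda}_j\phi(x)$ with a common $\phi\in\mathcal{S}$, the right-hand side of \eqref{finitely_many} is asymptotically driven by the single nonlinearity $\phi$ against the aggregated measures
\[
\nu_1(ds):=\sum_{j=1}^{N_1}\bar{\lambda}_j\,\bar{\mu}_j(ds),\qquad \nu_2(ds):=\sum_{j=1}^{N_2}\underline{\lambda}_j\,\underline{\mu}_j(ds),
\]
of total aggregated mass $\tilde{M}:=\nu_1([0,\tau])+\nu_2([0,\infty))$.

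First I would establish $x(t)\to\infty$ by the monotonicity-and-integration argument used in Theorem~\ref{thm.sub}: positivity of every measure and every nonlinearity forces $x'\geq 0$, while a finite limit for $x$ would make the right-hand side of \eqref{finitely_many} bounded below by a strictly positive constant, contradicting boundedness. Next, for each $\epsilon\in(0,1)$, choose $X(\epsilon)>0$ so large that
\[
(1-\epsilon)\bar{\lambda}_j\phi(u)\leq\bar{f}_j(u)\leq(1+\epsilon)\bar{\lambda}_j\phi(u),\quad (1-\epsilon)\underline{\lambda}_j\phi(u)\leq\underline{f}_j(u)\leq(1+\epsilon)\underline{\lambda}_j\phi(u)
\]
for every admissible $j$ and every $u\geq X(\epsilon)$, and choose $T>0$ with $x(T)\geq X(\epsilon)$.

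For $t>T+\tau$, since $x(t-s)\geq x(T)\geq X(\epsilon)$ for all $s\in[0,\tau]$, the bounded-delay sum in \eqref{finitely_many} admits the pointwise sandwich
\[
(1-\epsilon)\int_{[0,\tau]}\nu_1(ds)\phi(x(t-s))\leq\sum_{j=1}^{N_1}\int_{[0,\tau]}\bar{\mu}_j(ds)\bar{f}_j(x(t-s))\leq (1+\epsilon)\int_{[0,\tau]}\nu_1(ds)\phi(x(t-s)).
\]
Splitting the Volterra sum at $s=t-T$ gives the analogous estimate on $[0,t-T]$ with $\nu_2$ replacing $\nu_1$, while the contribution from $(t-T,t]$ is bounded uniformly in $t$ by a constant $K(T,\epsilon)$ formed from the maxima of each $\underline{f}_j$ on $[0,X(\epsilon)]$ and the finite tails of the $\underline{\mu}_j$. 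Consequently $x'(t)$ is squeezed, up to factors $1\pm\epsilon$ and a bounded additive residual, between the two driving terms of the single-nonlinearity equation $y'(t)=\int_{[0,\tau]}\nu_1(ds)\phi(y(t-s))+\int_{[0,t]}\nu_2(ds)\phi(y(t-s))$; Theorem~\ref{increasing_con} applied to this auxiliary equation delivers $\int_1^{x(t)}du/\phi(u)\sim\tilde{M}t$, and sending $\epsilon\to 0^+$ transfers the conclusion to $x$ itself in the form asserted.

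The main obstacle is controlling the Volterra boundary region $s\approx t$, where $x(t-s)$ remains small and the asymptotic approximation $\underline{f}_j(u)\sim\underline{\lambda}_j\phi(u)$ is unavailable; this is handled by the same split at $s=t-T$ used in the proof of Theorem~\ref{increasing_con}, since the bounded residual $K(T,\epsilon)$ is dominated as $t\to\infty$ by the bulk term $\int_{[0,t-T]}\nu_2(ds)\phi(x(t-s))$. Having finitely many summands introduces no real complication, as all the estimates are linear combinations with positive, finite coefficients and the aggregated measures $\nu_1,\nu_2$ inherit finiteness from \eqref{finite_mult_measures} and \eqref{fasym_mult}.
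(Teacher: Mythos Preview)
The paper gives no proof of this theorem: it says explicitly that ``without much work the above results can be used to prove the following theorem: accordingly, we state it without proof.'' Your reduction to Theorem~\ref{increasing_con} via the aggregated measures $\nu_1=\sum_j\bar{\lambda}_j\bar{\mu}_j$ and $\nu_2=\sum_j\underline{\lambda}_j\underline{\mu}_j$ is exactly the intended route, and your handling of the Volterra tail $(t-T,t]$ mirrors the argument in the proof of Theorem~\ref{increasing_con}.

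There is, however, one genuine issue you should flag rather than glide over. Your argument correctly yields
\[
\lim_{t\to\infty}\frac{\Phi(x(t))}{t}=\tilde{M}:=\sum_{j=1}^{N_1}\bar{\lambda}_j\,\bar{\mu}_j([0,\tau])+\sum_{j=1}^{N_2}\underline{\lambda}_j\,\underline{\mu}_j([0,\infty)),
\]
where $\Phi(x)=\int_1^x du/\phi(u)$. But the theorem as stated asserts $F(x(t))/(Mt)\to 1$ with $f=\lambda\phi$ and $M$ from \eqref{finite_mult_measures}; unpacking, this is $\Phi(x(t))/t\to\lambda M$ with
\[
\lambda M=\Bigl(\sum_j\bar{\lambda}_j+\sum_j\underline{\lambda}_j\Bigr)\Bigl(\sum_j\bar{\mu}_j([0,\tau])+\sum_j\underline{\mu}_j([0,\infty))\Bigr).
\]
In general $\tilde{M}\neq\lambda M$ (take $N_1=2$, $N_2=0$, $\bar{\lambda}_1=1$, $\bar{\lambda}_2=2$, unit total mass for each $\bar{\mu}_j$: then $\tilde{M}=3$ but $\lambda M=6$). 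A quick check with the ODE heuristic $x'\approx\sum_j\bar{\lambda}_j\phi(x)\bar{\mu}_j([0,\tau])+\sum_j\underline{\lambda}_j\phi(x)\underline{\mu}_j([0,\infty))$ confirms that \emph{your} constant $\tilde{M}$ is the correct growth rate, and the constants in the stated theorem are misassembled. So your final clause ``in the form asserted'' is not quite right: you have proved the correct limit, which does not match the stated one. You should either note the discrepancy explicitly, or recast the conclusion with $f:=\phi$ and the weighted total mass $\tilde{M}$ in place of $\lambda$ and $M$.
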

We can formulate an analogous result to Theorem \ref{Thm.incr.mult} in the case when we have finitely many decreasing nonlinear functions in \eqref{finitely_many} using Theorem \ref{decreasing_con}, \eqref{finite_mult_measures} and a condition of the form \eqref{fasym_mult}.
\section{Results with Regular Variation} \label{sec.RV}
We now present some auxiliary results which show that our main results can readily be applied to the case when the sublinear function $f$ is regularly varying at infinity, in Karamata's sense. 
The standard text on the theory of regularly varying functions is that of Bingham, Goldie and Teugels \cite{BGT} and all properties and results which we exploit can be found therein. A more succinct outline of the main useful properties (within the present context) can be found in  \cite{appleby2013classification}.

Essentially, if $f$ is in $\text{RV}_\infty(\beta)$ with $\beta\in (0,1)$, it immediately satisfies the condition \eqref{fasym}, and so Theorem~\ref{increasing_con} and all relevant corollaries can be applied. If $\beta<0$, then the hypothesis that $f$ is asymptotically decreasing in Theorem~\ref{decreasing_con}
is satisfied, and so Theorem~\ref{decreasing_con} applies. If $\beta>1$, $f$ is not sublinear, and we are outside the scope of the paper. The case when $\beta\in \{0,1\}$ contains subtleties which we discuss presently, but in some cases we may still apply our results easily. 

Due to Karamtata's theorem, and the theory of asymptotic inverses for regularly varying functions, in many cases determining 
the asymptotic behaviour of $F$ or $F^{-1}$ is straightforward, and so explicit rates of growth for the solution can be established. We give an example below in which we do these calculations directly, rather than extracting them from known results, in order to keep our presentation self--contained.  

Finally, we note that there is a burgeoning literature regarding the application of the theory of regular variation to the 
asymptotic behaviour of ordinary and functional differential equations (see for example the monographs of Mari\'{c} \cite{maric2000regular}, and {\v{R}}eh{\'a}k \cite{rehakrv} and recent representative papers such as those of Chatzarakis et al, \cite{chatzarakis2014precise}, Matucci and {\v{R}}eh{\'a}k \cite{matucci2014asymptotics,matucci2015extremal}, and Takasi and Manojlovi{\'c} \cite{takasi2011precise}).

Our first result is a direct application of Theorems \ref{increasing_con} and \ref{decreasing_con}, in conjunction with 
Theorem~\ref{F_inv}.
\begin{theorem}\label{RV_beta_corollary_1}
Suppose that the measures $\mu_1$ and $\mu_2$ obey \eqref{finite_measure}, $f \in \text{RV}_\infty(\beta), \,\, \beta \in (-\infty,1)/\{0\}$ and $\psi \in C([-\tau,0];(0,\infty))$. Then, the unique continuous solution, $x$, of \eqref{functional} obeys
\[ 
\lim_{t \to \infty}x(t) = \infty; \quad \lim_{t \to \infty}\frac{x(t)}{F^{-1}(Mt)} = 1.
\]
\end{theorem}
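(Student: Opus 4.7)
The plan is to split the proof into the two cases $\beta \in (0,1)$ and $\beta < 0$, and in each case reduce to an earlier theorem by verifying its hypotheses using standard facts from the theory of regular variation (notably the Smooth Variation Theorem and Karamata's theorem, both in \cite{BGT}).

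For $\beta \in (0,1)$, the plan is to apply Theorem~\ref{F_inv}. First, I would invoke the Smooth Variation Theorem to produce a function $\phi \in C^\infty((0,\infty);(0,\infty))$ with $\phi \sim f$ at infinity and with the property that $x \phi'(x)/\phi(x) \to \beta$ as $x\to\infty$. Since $\beta > 0$, this guarantees that $\phi'(x)>0$ for all sufficiently large $x$ (and, by modifying $\phi$ on a bounded set if necessary, for all $x>0$), while $\phi'(x) \sim \beta \phi(x)/x \to 0$ because $\phi(x)/x \sim f(x)/x \to 0$ by sublinearity (as $\beta<1$). Hence $\phi \in \mathcal{S}$ and \eqref{fasym} holds. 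Next, Karamata's theorem applied to $1/f \in \mathrm{RV}_\infty(-\beta)$ with $-\beta > -1$ gives $F(x) = \int_1^x du/f(u) \sim x/((1-\beta)f(x))$ as $x\to\infty$, whence
\[
\lim_{x\to\infty} \frac{f(x)F(x)}{x} = \frac{1}{1-\beta} < \infty,
\]
so \eqref{L} is satisfied. Theorem~\ref{F_inv} then yields $x(t)/F^{-1}(Mt) \to 1$.

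For $\beta < 0$, the plan is to apply Theorem~\ref{decreasing_con}. Again by the Smooth Variation Theorem there exists $\phi \in C^\infty((0,\infty);(0,\infty))$ with $\phi \sim f$ and $x\phi'(x)/\phi(x) \to \beta$; since $\beta < 0$ this forces $\phi'(x) < 0$ for all sufficiently large $x$. After a harmless modification of $\phi$ on a bounded interval we may assume $\phi \in C^1((0,\infty);(0,\infty)) \cap C([0,\infty);(0,\infty))$ is strictly decreasing on $(0,\infty)$ while retaining $\phi \sim f$. All hypotheses of Theorem~\ref{decreasing_con} are then met and its conclusion delivers the claim.

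The main technical issue, and the only place where care is required, is ensuring that the smooth asymptotic equivalent $\phi$ supplied by the Smooth Variation Theorem can be taken with the \emph{global} monotonicity required by $\mathcal{S}$ (for $\beta \in (0,1)$) or by Theorem~\ref{decreasing_con} (for $\beta < 0$); the theorem only furnishes the correct monotonicity in a neighbourhood of infinity, but since altering $\phi$ on a compact set preserves the asymptotic relation $\phi \sim f$ and all limiting derivative conditions, this is easily handled by a smooth patching on a bounded interval. The case $\beta = 0$ is explicitly excluded in the hypothesis because a slowly varying $f$ can fail both \eqref{L} and the asymptotic decrease hypothesis of Theorem~\ref{decreasing_con}, so neither reduction is available in full generality; this is consistent with the remarks after \eqref{L}.
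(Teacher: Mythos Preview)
Your proposal is correct and follows essentially the same route as the paper's proof: split according to the sign of $\beta$, for $\beta<0$ invoke Theorem~\ref{decreasing_con} after producing a smooth decreasing equivalent, and for $\beta\in(0,1)$ verify \eqref{fasym} via a smooth regularly varying equivalent and \eqref{L} via Karamata's theorem, then apply Theorem~\ref{F_inv}. The only cosmetic differences are that the paper routes the $\beta\in(0,1)$ case through Theorem~\ref{increasing_con} before Theorem~\ref{F_inv} (redundant, since the latter's hypotheses subsume the former's), and that your Karamata computation gives the correct constant $1/(1-\beta)$ where the paper writes $1-\beta$; either way the limit is finite, so the argument goes through.
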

\begin{proof}
	If $\beta \in (-\infty,0)$ we immediately have that $f$ is asymptotic to a decreasing function and hence we may apply Theorem \ref{decreasing_con} to yield the claim. If $\beta\in (0,1)$ there exists an increasing function $\phi \in C^1((0,\infty);(0,\infty)) \cap \text{RV}_\infty(\beta)$ such that 
	\[
	\lim_{x \to \infty}\frac{f(x)}{\phi(x)} = 1, \quad \lim_{x \to \infty}\frac{x\,\phi'(x)}{\phi(x)}=\beta.
	\]
	It follows that $\phi'(x) \sim \beta\,\phi(x)/x$ as $x\to\infty$ and hence that $\phi' \in \text{RV}(\beta-1)$. Therefore $\lim_{x\to\infty}\phi'(x)= 0$ \cite[Proposition 1.5.1]{BGT}. Now apply Theorem \ref{increasing_con} to obtain $\lim_{t\to\infty}F(x(t))/Mt=1$; we use Theorem \ref{F_inv} to strengthen this conclusion. By Karamata's Theorem \cite[Theorem 1.5.11]{BGT} we obtain 
	\[
	\limsup_{x \to \infty}\frac{f(x)\,F(x)}{x} = 1-\beta < \infty.
	\]
	Therefore applying Theorem \ref{F_inv} yields $x(t) \sim F^{-1}(Mt)$ as $t \to \infty$.
\end{proof}
\begin{examples}
The following is but a simple application of Theorem~\ref{RV_beta_corollary_1}, and the reader is invited to consider others.   
Consider the Volterra equation  
\[
x'(t)=af(x(t))+\int_0^t \frac{1}{(1+t-s)^{\theta+1}} f(x(s))\,ds, \quad t>0; \quad x(0)=\psi>0,
\]
where $a\geq 0$, $\theta>0$ and $f:[0,\infty)\to (0,\infty)$ is locally Lipschitz continuous with 
$f(x)\sim x^\beta (\log x)^\alpha$ as $x\to\infty$ for $\beta\in (0,1)$, $\alpha\in \mathbb{R}$. The conditions ensure a unique positive continuous solution, and indeed, as $f\in \text{RV}_\infty(\beta)$, we see that all the hypotheses of Theorem~\ref{RV_beta_corollary_1} apply, with 
\[
M=a+\int_0^\infty \frac{du}{(1+u)^{1+\theta}}=a+\frac{1}{\theta}, \quad F(x)\sim \int_e^x \frac{du}{u^\beta (\log u)^\alpha}=:\Phi(x), \quad \text{as $x\to\infty$}.
\]
It remains to determine explicitly the asymptotic behaviour of $F^{-1}(x)$ as $x\to\infty$. 
Clearly
\[
\Phi(x)=\int_1^{\log x} v^{-\alpha} e^{(1-\beta)v}\,dv.
\]
By applying L'H\^opital's rule, we get
\[
\int_1^y v^{-\alpha} e^{(1-\beta)v}\,dv \sim \frac{1}{1-\beta} y^{-\alpha} e^{(1-\beta)y}, \quad \text{as $y\to\infty$},
\]
and so we have 
\[
F(x)\sim \frac{1}{1-\beta} (\log x)^{-\alpha} x^{1-\beta}, \quad \text{as $x\to\infty$}.
\]
Using this asymptotic relation (with $x=F^{-1}(y)$), it can readily be shown that 
$\log F^{-1}(y)/\log y\to 1/(1-\beta)$ as $y\to\infty$. Replacing this in the asymptotic relation for $F(x)$ leads to  
\[
F^{-1}(y) \sim (1-\beta)^{\frac{1-\alpha}{1-\beta}} (\log y)^\frac{\alpha}{1-\beta} y^{\frac{1}{1-\beta}}, \quad \text{as $y\to\infty$}.
\]
Now by Theorem~\ref{RV_beta_corollary_1} we get
\begin{equation} \label{eq.xRVexampleasy}
x(t)\sim F^{-1}(Mt)\sim 
 (1-\beta)^{\frac{1-\alpha}{1-\beta}}\left(a+\frac{1}{\theta}\right)^{\frac{1}{1-\beta}}
(\log t)^\frac{\alpha}{1-\beta}   t^{\frac{1}{1-\beta}}, \quad \text{ as $t\to\infty$}.
\end{equation}
\end{examples}

\begin{examples}
In the last example, the measure exhibited power--law decay. We consider now the same nonlinearity, but an exponentially decaying measure, so that the Volterra equation reads,
\[
x'(t)=af(x(t))+\int_0^t e^{-\theta(t-s)} f(x(s))\,ds, \quad t>0; \quad x(0)=\psi>0,
\]
where once again $a\geq 0$ and $\theta>0$. 
As before, there is a unique positive continuous solution and all the hypotheses of 
Theorem~\ref{RV_beta_corollary_1} apply, with 
\[
M=a+\int_0^\infty e^{-\theta u}\,du=a+\frac{1}{\theta},
\]
so we recover exactly the same asymptotic behaviour of the solution $x$ as in the last example 
(i.e., \eqref{eq.xRVexampleasy}). Therefore, even though the past behaviour of the solution is discounted much more rapidly in this example than in the previous one, there is no difference in the rate of growth of the solution (to first order) because the value of $M$ is the same in each case. Indeed, if we were to consider the delay--differential equation
\[
x'(t)=af(x(t))+\frac{1}{\theta}f(x(t-\tau)), \quad t>0; \quad x(t)=\psi(t)>0, \quad t\in [-\tau,0],
\]
with the same $f$, and $\tau>0$ fixed, we see once again $x$ obeys \eqref{eq.xRVexampleasy}. This is 
because the mass of the point delta measure at $\tau$ is $1/\theta$, and the mass of the point delta measure at $0$ is 
$a$, so $M=a+1/\theta$, just as before. In this case, even though the past behaviour of the solution makes no contribution before time $t-\tau$, the same growth rate eventuates.
\end{examples}

\begin{examples}
Notice that Theorem~\ref{RV_beta_corollary_1} does not apply to the case where $\beta=1$. However, if $f$ is truly sublinear, 
then the condition \eqref{fasym} holds, and Theorem~\ref{increasing_con} applies. However, as mentioned earlier, $f$ cannot satisfy \eqref{L}, and so we cannot conclude directly that $x$ obeys \eqref{eq.xdirectasy}. Indeed, it has been shown in \cite[Theorem 2.2]{subexp}, in the case that $f'\in \text{RV}_{\infty}(0)$ (which implies $f\in \text{RV}_\infty(1)$), and the delay is bounded (so $\mu_2\equiv 0$), that we have 
\[
\lim_{t\to\infty} \frac{x(t)}{F^{-1}(Mt)}=e^{-\lambda\,C},
\]  
 where 
\[
\lim_{x\to\infty} \frac{f(x)}{x/\log x}=:\lambda\in [0,\infty], \quad C:= \int_{[0,\tau]} s\,\mu_1(ds).
\]

Therefore, the conclusion of Theorem~\ref{RV_beta_corollary_1} \emph{need not} hold if $f(x)$ is of larger order than $x/\log x$ as $x\to\infty$, and the delay is nontrivial, although $x(t)/F^{-1}(Mt)\to 1$ as $t\to\infty$ if $f(x)=o(x/\log x)$ as $x\to\infty$. It would obviously be of interest to see whether the rather restrictive slow variation hypothesis on $f'$ in \cite{subexp} could be replaced by weaker monotonicity--type conditions, and to what degree the result still holds for Volterra equations.  

The determination of the asymptotic behaviour of $F$ and $F^{-1}$ is more delicate when $f\in \text{RV}_\infty(1)$, in large part because Karamata's theorem only shows that $1/F(x)=o(f(x)/x)$ as $x\to\infty$. However, we supply a concrete example in which 
the asymptotic behaviour of $F$ can be worked out explicitly, and Theorem~\ref{increasing_con} applies. 

We consider the Volterra equation  
\[
x'(t)=\int_0^t \frac{1}{(1+t-s)^{\theta+1}} f(x(s))\,ds, \quad t>0; \quad x(0)=\psi>0,
\]
where $\theta>0$ and $f:[0,\infty)\to (0,\infty)$ is locally Lipschitz continuous with $f(x)\sim x/(\log x)^\alpha$ as $x\to\infty$ for $\alpha>0$. We see that $f\in \text{RV}_\infty(1)$ and $f(x)/x\to 0$ as $x\to\infty$, so not only do these conditions ensure a unique positive continuous and growing solution, but moreover they ensure that Theorem~\ref{increasing_con} apply, with 
\[
M=\int_0^\infty \frac{1}{(1+u)^{1+\theta}}\,du=\frac{1}{\theta}, \quad F(x)\sim \int_e^x \frac{(\log u)^\alpha}{u}\,du=:\Phi(x), \quad \text{as $x\to\infty$}.
\]
Clearly
\[
\Phi(x)=\frac{(\log x)^{\alpha+1}}{\alpha+1},
\]
and so we have 
\[
F(x)\sim \frac{(\log x)^{\alpha+1}}{\alpha+1}, \quad \text{as $x\to\infty$}, 
\]
Therefore by Theorem~\ref{increasing_con},
\[
\lim_{t\to\infty} \frac{1}{t} \frac{(\log x(t))^{\alpha+1}}{\alpha+1}=\frac{1}{\theta},
\]
so
\[
\lim_{t\to\infty} \frac{\log x(t)}{t^{1/(\alpha+1)}} =\left(\frac{\alpha+1}{\theta}\right)^{1/(\alpha+1)}.
\]
Since $\alpha>0$, the growth is slower than exponential, as expected, and we may view this as the limit as a generalisation of the Liapunov exponent in this nonlinear setting. 
\end{examples} 

\begin{examples}
To illustrate the utility of only requiring asymptotic monotonicity in our earlier results suppose $f(x) = x^\alpha [2+\sin(\log_2(x+2))]$, $\alpha \in (0,1)$. $f\in \text{RV}_\infty(\alpha)$ and, although $f$ is clearly non-monotone, it oscillates slowly enough that $f'(x)>0$ for all $x$ large enough.
\end{examples}
Theorem \ref{RV_beta_corollary_1} immediately raises the question of what happens when $f$ is regularly varying with index zero. In this case there is no guarantee that $f$ will be asymptotic to a monotone function and hence we cannot rely on any of our previous work. An example emphasising the extreme oscillatory behaviour possible within the class $\text{RV}_\infty(0)$ is to take 
\begin{equation}\label{RV_osc}
f(x) = \exp[ \ln(2+x)^{\tfrac{1}{3}} \cos(\ln(2+x)^{\tfrac{1}{3}}) ].
\end{equation}
In this example, $\liminf_{x \to \infty}f(x) = 0$ and $\limsup_{x \to \infty}f(x) = \infty$. \\
\indent The following pair of results provide a partial answer to what happens when we have $f \in \text{RV}_\infty(0)$. Our first result shows that when the delay is bounded we still have $x(t) \sim F^{-1}(Mt)$ as $t \to \infty$ with no additional hypotheses on the problem.
\begin{theorem} \label{RV_0}
Suppose that the measure $\mu_1$ obeys \eqref{finite_measure} with $\mu_2 \equiv 0$, $f \in \text{RV}_\infty(0)$ and $\psi \in C([-\tau,0];(0,\infty))$. Then, the unique continuous solution, $z$, of \eqref{DDE} obeys
\[
\lim_{t \to \infty}z(t) = \infty; \quad \lim_{t \to \infty}\frac{z(t)}{F^{-1}(Mt)} = 1.
\]
\end{theorem}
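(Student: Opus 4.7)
Since $f\in \mathrm{RV}_\infty(0)$, we have $f(x)/x^\alpha\to 0$ as $x\to\infty$ for every $\alpha>0$ (\cite[Proposition 1.3.6]{BGT}); in particular $f$ is sublinear in the sense of \eqref{eq.fsublinear}. Hence Theorem~\ref{thm.sub} applies to $z$, giving $z(t)\to\infty$ and $z'(t)/z(t)\to 0$ as $t\to\infty$. Integrating $z'/z$ over the sliding window $[t-\tau,t]$ shows $\log z(t)-\log z(t-\tau)\to 0$, so $z(t)/z(t-\tau)\to 1$. Moreover $z$ is non-decreasing on $[0,\infty)$ (since $z'(t)=\int_{[0,\tau]}\mu_1(ds)f(z(t-s))>0$), so for $s\in[0,\tau]$ and $t$ large, $z(t-s)/z(t)\in[z(t-\tau)/z(t),1]$; therefore $z(t-s)/z(t)\to 1$ uniformly in $s\in[0,\tau]$.

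The decisive step is to invoke the uniform convergence theorem for slowly varying functions (\cite[Theorem 1.2.1]{BGT}): for any compact $[a,b]\subset(0,\infty)$, $\sup_{\lambda\in[a,b]}|f(\lambda x)/f(x)-1|\to 0$ as $x\to\infty$. Writing $\lambda(s,t):=z(t-s)/z(t)$, the preceding paragraph tells us $\lambda(s,t)\in[1-\epsilon,1]$ for $t$ large, uniformly in $s\in[0,\tau]$, so
\[
\sup_{s\in[0,\tau]}\left|\frac{f(z(t-s))}{f(z(t))}-1\right|\to 0 \quad\text{as } t\to\infty.
\]
Dividing \eqref{DDE} by $f(z(t))$ and using that $\mu_1$ has total mass $M$ then yields $z'(t)/f(z(t))\to M$. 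Since $F'(x)=1/f(x)$, the left side is $\frac{d}{dt}F(z(t))$, and integration over $[T,t]$ with division by $t$ gives $F(z(t))/Mt\to 1$.

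To sharpen this to $z(t)/F^{-1}(Mt)\to 1$, we cannot simply invoke Theorem~\ref{F_inv}, since $f$ need not be asymptotic to any monotone function and so \eqref{fasym} may fail. Instead, observe that $1/f\in\mathrm{RV}_\infty(0)$ and $\int_1^\infty du/f(u)=\infty$ (because $f(x)/x\to 0$), so Karamata's theorem (\cite[Theorem 1.5.11]{BGT}) delivers $F(x)\sim x/f(x)$ as $x\to\infty$. Hence $F\in\mathrm{RV}_\infty(1)$, and its continuous strictly increasing inverse satisfies $F^{-1}\in\mathrm{RV}_\infty(1)$. Writing $F(z(t))=Mt\,(1+\eta(t))$ with $\eta(t)\to 0$, we have $z(t)=F^{-1}(Mt(1+\eta(t)))$, and uniform convergence for $F^{-1}$ on compacts around $1$ yields $z(t)/F^{-1}(Mt)\to 1$.

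The main obstacle is the potential wild non-monotonicity of $f\in\mathrm{RV}_\infty(0)$ (cf.\ the example \eqref{RV_osc}), which rules out a direct appeal to Theorem~\ref{increasing_con} or Theorem~\ref{F_inv}. The resolution rests on the uniform convergence theorem, and the critical point is that it only applies because the ratios $z(t-s)/z(t)$ are uniformly close to $1$ on a compact window; this in turn relies on the bounded delay assumption $\mu_2\equiv 0$, which is precisely why the unbounded-delay analogue requires additional structural hypotheses on $f$.
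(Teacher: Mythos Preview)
Your proof is correct and follows essentially the same route as the paper's: show $z'(t)/z(t)\to 0$, deduce $z(t-s)/z(t)\to 1$ uniformly on $[0,\tau]$, apply the uniform convergence theorem for slowly varying functions to get $f(z(t-s))/f(z(t))\to 1$ uniformly, conclude $z'(t)/f(z(t))\to M$ and hence $F(z(t))/(Mt)\to 1$, then use $F^{-1}\in\mathrm{RV}_\infty(1)$ to pass to $z(t)/F^{-1}(Mt)\to 1$. The only cosmetic differences are that you invoke Theorem~\ref{thm.sub} rather than re-deriving $z'/z\to 0$ in situ, and you justify $F^{-1}\in\mathrm{RV}_\infty(1)$ explicitly via Karamata's theorem rather than asserting it.
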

In the case of unbounded delay the problem is much more delicate and only under additional hypotheses have we been able to retain the asymptotic rates as before. If we assume that $f$ is bounded away from zero we rule out highly irregular nonlinearities such as \eqref{RV_osc} and we can prove the following result. 
\begin{theorem}\label{RV_0_2}
Suppose that the measure $\mu_2$ obeys \eqref{finite_measure} with $\mu_1 \equiv 0$, $f \in \text{RV}_\infty(0)$ is bounded away from zero and $\psi > 0$. Then, the unique continuous solution, $v$, of \eqref{VIDE} obeys
\[
\lim_{t \to \infty}v(t) = \infty; \quad \lim_{t \to \infty}\frac{v(t)}{F^{-1}(Mt)} = 1.
\]
\end{theorem}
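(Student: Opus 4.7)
My plan is to mirror the argument for the bounded-delay case (Theorem~\ref{RV_0}) while carefully controlling the tail of $\mu_2$, and then upgrade $F(v(t))/(Mt)\to 1$ to the direct asymptotic $v(t)/F^{-1}(Mt)\to 1$ via the regular variation of $F^{-1}$. First, the preliminaries: since $f$ is positive and bounded away from zero, say $f\geq c>0$ on $[\psi,\infty)$, and $\mu_2$ is non-negative with total mass $M>0$, equation~\eqref{VIDE} gives $v'(t)\geq c\,\mu_2([0,t])\to cM>0$, so $v$ is increasing and $v(t)\to\infty$. Slow variation of $f$ implies sublinearity, so Theorem~\ref{thm.sub} yields a zero Liapunov exponent for $v$; consequently $v(t-s)/v(t)\to 1$ uniformly for $s$ in any compact subset of $[0,\infty)$, and the uniform convergence theorem for slowly varying functions then delivers $f(v(t-s))/f(v(t))\to 1$ uniformly on such compacts.

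The core step is $F(v(t))/(Mt)\to 1$. For any $T>0$ I would decompose
\[
\frac{v'(t)}{f(v(t))} = \int_{[0,T]}\mu_2(ds)\,\frac{f(v(t-s))}{f(v(t))} + \int_{(T,t]}\mu_2(ds)\,\frac{f(v(t-s))}{f(v(t))}.
\]
The first summand tends to $\mu_2([0,T])$ by the uniform convergence just noted, and integrating, letting $t\to\infty$ and then $T\to\infty$, yields the lower estimate $\liminf_t F(v(t))/t\geq M$. This argument is entirely analogous to the bounded-delay case.

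The matching upper bound is the main obstacle. I would integrate $v'(u)/f(v(u))$ over $[T,t]$ and apply Fubini to the $(T,\cdot]$-piece to rewrite it as
\[
\int_{(T,t]}\mu_2(ds)\int_s^t \frac{f(v(u-s))}{f(v(u))}\,du.
\]
For each fixed $s$, slow variation of $f$ combined with subexponentiality of $v$ gives $f(v(u-s))/f(v(u))\to 1$ as $u\to\infty$, so by Ces\`aro the inner integral equals $(t-s)(1+o_s(1))$ as $t\to\infty$. After dividing by $t$, a dominated-convergence argument in the $\mu_2(ds)$ integral — whose majorant is supplied by the crude bound $f(v(u-s))/f(v(u))\leq f(v(u-s))/c$ (available because $f\geq c$) combined with Potter's inequality for slowly varying functions — shows the tail contributes $\mu_2((T,\infty))$. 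Adding the $[0,T]$-piece, which contributes $\mu_2([0,T])$, gives $\limsup_t F(v(t))/t\leq M$.

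Finally, to upgrade to $v(t)/F^{-1}(Mt)\to 1$, I would use that $f\in\text{RV}_\infty(0)$ together with Karamata's theorem \cite[Theorem~1.5.11]{BGT} implies $F(x)\sim x/f(x)$ as $x\to\infty$, so $F\in\text{RV}_\infty(1)$ and its asymptotic inverse $F^{-1}\in\text{RV}_\infty(1)$. Writing $v(t)=F^{-1}\!\bigl(Mt\cdot F(v(t))/(Mt)\bigr)$ and invoking the uniform convergence of regularly varying functions on compact subsets of $(0,\infty)$ transfers $F(v(t))/(Mt)\to 1$ to $v(t)/F^{-1}(Mt)\to 1$. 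The principal technical difficulty throughout is the dominated-convergence step in the upper bound; the hypothesis that $f$ is bounded away from zero is indispensable there, preventing $1/f(v(u))$ from blowing up along downward oscillations of $f$ — the very pathology exhibited by the example~\eqref{RV_osc}, which the hypothesis explicitly excludes.
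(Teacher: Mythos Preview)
Your lower bound and the final upgrade via $F^{-1}\in\text{RV}_\infty(1)$ match the paper exactly (the paper defers the lower bound to Theorem~\ref{RV_0_lower_bound}). The divergence is in the upper bound, and there your route has a gap.

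The paper does not integrate and invoke Fubini; it works pointwise with $v'(t)/f(v(t))$. The key move is that the paper reads the hypothesis as giving a \emph{two-sided} bound $\underline{f}<f(x)<\bar f$ for all $x>0$, whence the tail
\[
\int_{(T_2,t]}\mu_2(ds)\,\frac{f(v(t-s))}{f(v(t))}\;\le\;\frac{\bar f}{\underline f}\,\mu_2\bigl((T_2,\infty)\bigr)
\]
is small once $T_2$ is large, \emph{uniformly in $t$}. Adding the near-unit contribution on $[0,T_2]$ (from the same uniform-convergence argument you use) gives $\limsup_{t\to\infty}v'(t)/f(v(t))\le M$ at once, and asymptotic integration finishes.

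Your dominated-convergence step, by contrast, lacks a valid majorant. The crude bound $f(v(u-s))/f(v(u))\le f(v(u-s))/c$ followed by Potter yields at best $f(v(u-s))\le C\,v(u-s)^\delta\le C\,v(t)^\delta$, which diverges with $t$; applying Potter to the ratio instead gives $f(v(u-s))/f(v(u))\le C\bigl(v(u)/v(u-s)\bigr)^\delta$, but when $s$ ranges over $(T,t]$ and $u$ over $[s,t]$ the argument $u-s$ can be near zero, and subexponentiality of $v$ only delivers $v(u)/v(u-s)\le e^{\eta s+C_\eta}$. That majorant is not $\mu_2$-integrable without an exponential-moment hypothesis on $\mu_2$, which you do not have. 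So no $t$-uniform, $\mu_2$-integrable dominating function is available from these ingredients alone. The paper's pointwise argument bypasses this entirely via the upper bound $\bar f$; if you are prepared to use $\bar f$ as the paper does, that one-line tail estimate replaces your whole Fubini/DCT paragraph.
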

Of course, the hypothesis in Theorem~\ref{RV_0_2} that $f$ is bounded away from zero (by continuity of $x\mapsto f(x)$, this lower bound is meaningful in the limit as $x\to\infty$) is satisfied in the case that $f$ is asymptotically monotone. Therefore, one can 
rephrase Theorem~\ref{RV_beta_corollary_1} to include the case that $\beta=0$, at the small expense of assuming the asymptotic monotonicity of $f$ (which is automatically true when $\beta>0$). 

The above results constitute our main attempt to deal with the case when the nonlinear function has no monotonicity property whatsoever and as such we think it instructive to see how far this calculation can be taken in the case of unbounded delay, without additional hypotheses. In fact, the following lemma shows that we can obtain a sharp lower bound.
\begin{theorem}\label{RV_0_lower_bound}
Suppose that the measure $\mu_2$ obeys \eqref{finite_measure} with $\mu_1 \equiv 0$, $f \in \text{RV}_\infty(0)$ and $\psi > 0$. Then, the unique continuous solution, $v$, of \eqref{VIDE} obeys
\[
\lim_{t \to \infty}v(t) = \infty; \quad \liminf_{t \to \infty}\frac{v(t)}{F^{-1}(Mt)} \geq 1.
\]
\end{theorem}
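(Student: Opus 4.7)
The plan is to establish $\liminf_{t\to\infty} F(v(t))/(Mt) \ge 1$ and then upgrade this to the pointwise ratio statement using the regular variation of $F$ and $F^{-1}$. Since $1/f \in \text{RV}_\infty(0)$, Karamata's theorem gives $F(x) \sim x/f(x)$ as $x\to\infty$, so $F \in \text{RV}_\infty(1)$; as $F$ is strictly increasing with $F(x)\to\infty$, the inverse $F^{-1}$ is defined near $+\infty$ and lies in $\text{RV}_\infty(1)$. Consequently $F^{-1}(\lambda t)/F^{-1}(t)\to \lambda$ for each $\lambda>0$ and $F^{-1}(t+c)/F^{-1}(t)\to 1$ for each $c\in\mathbb{R}$. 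These two limits are exactly what is required to convert an estimate of the form $F(v(t)) \ge (1-\epsilon)Mt - C$ into $\liminf_{t\to\infty} v(t)/F^{-1}(Mt) \ge 1-\epsilon$, which on letting $\epsilon\to 0^+$ delivers the claim.

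To obtain the differential inequality on $F(v(t))$, I would first argue (along the lines of the proof of Theorem~\ref{thm.sub}) that $v$ is increasing, $v(t)\to\infty$, and $v'(t)/v(t)\to 0$ as $t\to\infty$. For each fixed $T>0$ this forces $v(t-T)/v(t)\to 1$, and monotonicity of $v$ upgrades this to $v(t-s)/v(t)\to 1$ uniformly in $s\in[0,T]$. Because $f$ is slowly varying at infinity, the uniform convergence theorem for slowly varying functions on compact subsets of $(0,\infty)$ (see \cite[Theorem 1.2.1]{BGT}) implies that for any $\epsilon\in(0,1)$ there exists $t_0=t_0(\epsilon,T)$ such that $f(v(t-s)) \ge (1-\epsilon) f(v(t))$ for all $t\ge t_0$ and $s\in[0,T]$. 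Dropping the non-negative tail $\int_{(T,t]}\mu_2(ds)\,f(v(t-s))$ from \eqref{VIDE} and using this pointwise bound yields
\[
v'(t) \;\ge\; (1-\epsilon)\,\mu_2([0,T])\,f(v(t)), \quad t\ge t_0.
\]

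Dividing by $f(v(t))$ and integrating over $[t_0,t]$ gives $F(v(t)) \ge F(v(t_0)) + (1-\epsilon)\,\mu_2([0,T])\,(t-t_0)$, so that $\liminf_{t\to\infty} F(v(t))/(Mt) \ge (1-\epsilon)\,\mu_2([0,T])/M$. Letting $T\to\infty$ (so $\mu_2([0,T])\to M$ by \eqref{finite_measure}) and then $\epsilon\to 0^+$ produces $\liminf_{t\to\infty} F(v(t))/(Mt) \ge 1$; combining with the two regular-variation limits for $F^{-1}$ recorded above finishes the argument. The main obstacle is that without monotonicity of $f$ there is no direct way to compare $f(v(t-s))$ with $f(v(t))$; it is resolved only because slow variation, together with the near-constancy $v(t-s)/v(t)\to 1$ on bounded $s$-ranges, supplies a uniform ratio bound on $[0,T]$. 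The tail $s\in(T,t]$, where $v(t-s)$ could be small and $f$ could oscillate wildly, is simply discarded in this lower bound — which is also why a matching $\limsup$ statement appears to require substantial extra hypotheses such as those in Theorem~\ref{RV_0_2}.
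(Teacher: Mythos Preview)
Your argument is correct and follows essentially the same route as the paper: use $v'(t)/v(t)\to 0$ to get $v(t-s)/v(t)\to 1$ uniformly on $[0,T]$, invoke the uniform convergence theorem for slowly varying $f$ to obtain $f(v(t-s))\ge (1-\epsilon)f(v(t))$ on that range, drop the tail, integrate, and finally upgrade via $F^{-1}\in\text{RV}_\infty(1)$. The only cosmetic difference is that the paper couples $T$ to $\epsilon$ (choosing $T(\epsilon)$ with $\mu_2([0,T(\epsilon)])\ge(1-\epsilon)M$, yielding $v'(t)\ge(1-\epsilon)^2Mf(v(t))$ in one step), whereas you send $T\to\infty$ and $\epsilon\to 0^+$ separately; both are valid.
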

Our final result demonstrates that under no additional assumptions we can at least obtain a `crude' upper bound on the growth rate of the solution to \eqref{VIDE} which agrees with the lower bound provided by Theorem \ref{RV_0_lower_bound} up to a logarithmic factor.
\begin{theorem}\label{RV_0_upper}
Suppose that the measure $\mu_2$ obeys \eqref{finite_measure} with $\mu_1 \equiv 0$, $f \in \text{RV}_\infty(0)$ is bounded away from zero and $\psi > 0$. Then, the unique continuous solution, $v$, of \eqref{VIDE} obeys
\[
\lim_{t \to \infty}v(t) = \infty; \quad \lim_{t \to \infty}\frac{\log(v(t))}{\log(t)} = 1.
\]
\end{theorem}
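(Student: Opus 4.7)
The proof would split into matching lower and upper bounds on $\log v(t)/\log t$. The lower bound follows from Theorem~\ref{RV_0_lower_bound}, which gives both $v(t) \to \infty$ and $\liminf_{t \to \infty} v(t)/F^{-1}(Mt) \geq 1$, once combined with an asymptotic analysis of $F^{-1}$. Since $f \in \text{RV}_\infty(0)$ is bounded away from zero, the reciprocal $1/f$ is continuous, slowly varying and bounded above on $[1,\infty)$, so Karamata's theorem yields $F(x) = \int_1^x du/f(u) \sim x/f(x)$ as $x \to \infty$. In particular $F \in \text{RV}_\infty(1)$, and the theory of asymptotic inverses of regularly varying functions gives $F^{-1} \in \text{RV}_\infty(1)$, whence $\log F^{-1}(Mt)/\log t \to 1$. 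Combined with $\liminf v(t)/F^{-1}(Mt) \geq 1$ this gives $\liminf_{t \to \infty} \log v(t)/\log t \geq 1$.

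For the matching upper bound the plan is to exploit Potter's bound for slowly varying functions. Since $f \in \text{RV}_\infty(0)$ is continuous on $[0,\infty)$, for every $\epsilon \in (0,1)$ there exists $K = K(\epsilon) > 0$ such that $f(x) \leq K(1+x^\epsilon)$ for all $x \geq 0$. Using this together with the fact that $v'(t) = \int_{[0,t]}\mu_2(ds) f(v(t-s)) \geq 0$ (so $v$ is nondecreasing, whence $v(t-s)\leq v(t)$), one obtains
\[
v'(t) \leq K M \bigl(1 + v(t)^\epsilon\bigr), \quad t \geq 0.
\]
Since $v(t) \to \infty$, there is $T = T(\epsilon)$ so that $v(t) \geq 1$ for $t \geq T$, and hence $v'(t) \leq 2KM\, v(t)^\epsilon$ on $[T,\infty)$. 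Separating variables and integrating on $[T,t]$ yields
\[
v(t)^{1-\epsilon} \leq v(T)^{1-\epsilon} + 2KM(1-\epsilon)(t - T), \quad t \geq T,
\]
which immediately gives $\limsup_{t\to\infty} \log v(t)/\log t \leq 1/(1-\epsilon)$. Letting $\epsilon \downarrow 0$ and combining with the lower bound yields the claim.

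The calculation itself is routine; the main technical point is the global Potter-type bound $f(x) \leq K(1+x^\epsilon)$ for all $x\geq 0$. This requires the standard asymptotic form (valid on $[X_0,\infty)$ for some $X_0$) together with the continuity of $f$ on the compact interval $[0,X_0]$ to absorb the finite supremum into the constant. A secondary point is the call to Karamata's theorem, for which the hypothesis that $f$ is bounded away from zero is essential, since it ensures $1/f$ is slowly varying, locally bounded, and locally integrable on $[1,\infty)$; this same hypothesis ultimately underwrites the conclusion $v(t) \to \infty$ that is used to apply the Potter bound past a fixed threshold.
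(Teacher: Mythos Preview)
Your proof is correct and follows the same overall strategy as the paper: the lower bound via Theorem~\ref{RV_0_lower_bound} together with $F^{-1}\in\text{RV}_\infty(1)$, and the upper bound via a Potter--type estimate $f(x)\leq C x^\epsilon$ leading to $v'(t)\leq C' v(t)^\epsilon$, separation of variables, and then $\epsilon\to 0$.

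The one genuine difference is in how the Potter bound is deployed for the upper estimate. The paper uses only the asymptotic form $f(x)<x^\epsilon$ for $x>X(\epsilon)$, and must therefore split the convolution $\int_{[0,t]}\mu_2(ds)f(v(t-s))$ into a ``recent'' piece $[0,t-T]$ (where $v(t-s)>X(\epsilon)$) and a ``tail'' piece $(t-T,t]$, the latter being shown to vanish because $\mu_2$ is finite. You instead absorb the behaviour on $[0,X(\epsilon)]$ into a global constant $K(\epsilon)$ using continuity of $f$, obtaining $f(x)\leq K(1+x^\epsilon)$ for all $x\geq 0$; this lets you bound $v'(t)$ in one stroke without splitting. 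Your route is slightly cleaner and avoids the auxiliary tail term $h(t)$, at the modest cost of an $\epsilon$--dependent constant which is harmless after taking logarithms. One small remark: your closing comment that boundedness of $f$ away from zero is ``essential'' for the Karamata step overstates matters---$1/f$ is automatically slowly varying and locally bounded once $f$ is continuous, positive, and slowly varying---so neither your argument nor the paper's actually uses that hypothesis in the proof of this particular result.
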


\section{Examples of Sublinearity}\label{sec.examples}
Before giving proofs of our results in Section \ref{sec.mainresults}, we close with the examples promised in Section \ref{discussion} which show the scope of the strengthened sublinearity hypothesis, \eqref{fasym}, frequently imposed on $f$. 

We find for the purposes of these examples it is more natural and instructive to construct an $f$ with the desired properties by  specifying $f'$. We defer the justification of the following examples to Section \ref{sec_exp}. Throughout these examples we define $f'$, for $n \in \mathbb{N}$, as follows
\begin{equation}\label{f_prime}
f'(x) = 
\begin{cases}
\eta(x), \,\, x \in (0,1] \cup (n+w_n,n+1],\\
\eta(n) + \frac{2(x-n)(h_n-\eta(n))}{w_n}, \,\, x \in (n,n+w_n/2],\\
h_n + \frac{2(x-n-w_n/2)(\eta(n+w_n)-h_n)}{w_n}, \,\, x \in (n+w_n/2,n+w_n]. 
\end{cases}
\end{equation}
Choosing $\eta(x) > 0$ for all $x>0$ and $h_n > 0$ for all $n\in \mathbb{N}$ ensures that $f$ is strictly increasing. Define $\phi(x) := \int_0^x \eta(u)du$ and by construction we will have $\phi \sim f$. In order to have both $\liminf_{x\to\infty}f'(x)=0$ and $\limsup_{x\to\infty}f'(x)>0$ we want $f'$ to largely follow the behaviour of $\eta$, which tends to zero, but to also have high, narrow spikes inherited from $h_n$. 
\begin{examples}\label{first_example}
Suppose $f'$ is defined by \eqref{f_prime}, $\phi(x):= \int_0^x \eta(u)du$ and that $\eta(x) \downarrow 0$ as $x\to\infty$, $0 < w_n < 1$, and $h_n > \phi'(n)$ for all $n \in \mathbb{N}$. Furthermore suppose that
\[
\lim_{x\to\infty}\phi(x)=\infty, \, \lim_{n\to\infty}h_n= L \in (0,\infty], \, \lim_{n\to\infty}\sum_{j=1}^n\frac{w_j h_j}{\phi(n)}=0, \, \lim_{n\to\infty}\sum_{j=1}^n\frac{w_j \phi'(j)}{\phi(n)}=0.
\] 
Then
\begin{enumerate}[(i.)]
\item $\liminf_{x\to\infty}f'(x)=0,\,\limsup_{x\to\infty}f'(x)\geq L$.
\item $f(x) \sim \phi(x)$ as $x\to\infty$ and hence $\lim_{x\to\infty}f(x)/x=0$.
\end{enumerate}
\end{examples}
The $f$ constructed in Example \ref{first_example} has ``spikes'' in its derivative which can grow arbitrarily quickly but since it is asymptotic to $\phi$ it still obeys condition \eqref{fasym}. When $\phi$ tends to a finite limit so does $f$ and moreover we do not require that $\phi$ grows faster than the sums of $w_j\,h_j$ and $h_j\,\phi'(j)$.
\begin{examples}\label{second_example}
Suppose $f'$ is defined by \eqref{f_prime}, $\phi(x):= \int_0^x \eta(u)du$ and that $\eta(x) \downarrow 0$ as $x\to\infty$, $0 < w_n < 1$, and $h_n > \phi'(n)$ for all $n \in \mathbb{N}$. Furthermore, if $L^*, \,L_0$ and $L_1$ are finite, suppose that
\[
\lim_{x\to\infty}\phi(x)=L^*, \, \lim_{n\to\infty}h_n= L \in (0,\infty], \, \lim_{n\to\infty}\sum_{j=1}^n w_j h_j=L_0, \, \lim_{n\to\infty}\sum_{j=1}^n w_j \phi'(j)=L_1.
\] 
Then
\begin{enumerate}[(i.)]
\item $\liminf_{x\to\infty}f'(x)=0,\,\limsup_{x\to\infty}f'(x)\geq L$.
\item $f(x)\to L' \in (0,\infty)$ as $x\to\infty$ and hence $\lim_{x\to\infty}f(x)/x=0$.
\end{enumerate}
\end{examples}
In this case $f$ is asymptotic to a constant so it once more obeys \eqref{fasym}.
\section{Proofs with Increasing Nonlinearity} \label{sec.mainresults}
In this section we prove general results in which it is assumed that $f$ is asymptotic to an increasing function, 
often in the class $\mathcal{S}$ introduced in \eqref{def.smoothsublinear}.
 
Before giving the proofs of our main results we state and prove some useful technical lemmata. The first makes explicit the fact that \eqref{fasym} implies sublinearity of $f$.
\begin{lemma}\label{to_zero}
Suppose $f$ is a continuous function obeying \eqref{fasym}.
Then $f(x)/x \to 0$ as $x \to \infty$.
\end{lemma}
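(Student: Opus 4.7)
The plan is to split the quotient $f(x)/x$ into the product $(f(x)/\phi(x))\cdot(\phi(x)/x)$, where $\phi \in \mathcal{S}$ is the function furnished by hypothesis \eqref{fasym} with $f(x)\sim \phi(x)$ as $x\to\infty$. The first factor tends to $1$ by definition of asymptotic equivalence, so the whole task reduces to showing that $\phi(x)/x\to 0$ as $x\to\infty$.

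To handle $\phi(x)/x$, I would appeal to the key structural property of $\mathcal{S}$, namely that $\phi'(x)\to 0$ as $x\to\infty$. Given $\epsilon>0$, pick $X=X(\epsilon)>0$ such that $0<\phi'(u)<\epsilon$ for all $u\geq X$. By the fundamental theorem of calculus (since $\phi\in C^1((0,\infty);(0,\infty))$),
\[
\phi(x) = \phi(X) + \int_X^x \phi'(u)\,du \leq \phi(X) + \epsilon(x-X), \qquad x\geq X.
\]
Dividing through by $x$ and letting $x\to\infty$ yields $\limsup_{x\to\infty}\phi(x)/x \leq \epsilon$; since $\phi>0$ gives $\liminf_{x\to\infty}\phi(x)/x \geq 0$, and $\epsilon>0$ is arbitrary, we conclude $\phi(x)/x\to 0$.

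Multiplying by $f(x)/\phi(x)\to 1$ then gives $f(x)/x\to 0$, which is the sublinearity claim \eqref{eq.fsublinear}. There is no real obstacle here — the argument is a one‑line application of the mean value inequality (or equivalently L'H\^opital's rule in the form $\phi(x)/x$ where both numerator and denominator may or may not diverge, handled by the elementary bound above to avoid case distinctions). The only point worth being careful about is that $\phi(X)$ is a finite constant once $X$ is fixed, so $\phi(X)/x\to 0$, which is why the bound collapses cleanly to $\epsilon$ in the limit.
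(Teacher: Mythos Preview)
Your proof is correct and follows essentially the same approach as the paper: both split $f(x)/x=(f(x)/\phi(x))\cdot(\phi(x)/x)$ and reduce the problem to showing $\phi(x)/x\to 0$ via the hypothesis $\phi'(x)\to 0$. The only cosmetic difference is that the paper distinguishes the cases $\phi(x)\to\infty$ (handled by L'H\^opital) and $\phi(x)\to L<\infty$, whereas your direct integral bound $\phi(x)\leq \phi(X)+\epsilon(x-X)$ treats both cases at once---a mild streamlining of the same idea.
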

\begin{proof}
Since $\phi$ is increasing, either $\phi(x) \to \infty$ as $x \to \infty$ or $\phi(x) \to L \in (0,\infty)$ as $x \to \infty$. In the latter case the asymptotic equivalence of $\phi$ and $f$ yields $\lim_{x \to \infty}f(x)/x=0$. In the first case we use L'Hopitals rule to obtain
\[
\lim_{x \to \infty}\phi(x)/x = \lim_{x \to \infty}\phi'(x)= 0.
\]
Thus
$
\lim_{x \to \infty}f(x)/x = \lim_{x \to \infty}\left( f(x)/\phi(x)\right)\left(\phi(x)/x\right) = 0.
$
\end{proof}
The proof of Proposition~\ref{prop.HWtype} requires the following preliminary lemma.
\begin{lemma}\label{asym_equiv_copy}
Suppose $\varphi$ is such that $\varphi(x)\to\infty$ as $x\to\infty$, $\varphi'(x)>0$ for $x>0$ and $\varphi'(x)$ is decreasing with $\varphi'(x)\to 0$ as $x\to\infty$. If $b,c \in C(\mathbb{R^+},\mathbb{R^+})$ obey 
$\lim_{t\to\infty}b(t)=\lim_{t\to\infty}c(t) = \infty$, and $b(t) \sim c(t)$ as $t\to\infty$, then $\varphi(b(t)) \sim \varphi(c(t))$ as $t\to\infty$.
\end{lemma}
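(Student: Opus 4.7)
The plan is to reduce the problem to showing that for any fixed $\lambda>0$ one has $\varphi(\lambda y)/\varphi(y)\to 1$ as $y\to\infty$, i.e.\ to show that $\varphi$ is slowly varying under the stated hypotheses. Since $b(t)\sim c(t)$ and both diverge, for any $\epsilon\in(0,1)$ there is $T$ such that $(1-\epsilon)c(t)<b(t)<(1+\epsilon)c(t)$ for $t\geq T$. Because $\varphi'>0$, $\varphi$ is strictly increasing, so
\[
\frac{\varphi((1-\epsilon)c(t))}{\varphi(c(t))}\leq \frac{\varphi(b(t))}{\varphi(c(t))}\leq \frac{\varphi((1+\epsilon)c(t))}{\varphi(c(t))},
\]
and it suffices to bound both extremes by $1+o(1)$ as $t\to\infty$, after which letting $\epsilon\to 0^+$ completes the proof.

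For the upper bound, I would apply the mean value theorem to get $\varphi((1+\epsilon)y)-\varphi(y)=\varphi'(\xi)\epsilon y$ for some $\xi\in(y,(1+\epsilon)y)$. Since $\varphi'$ is decreasing, $\varphi'(\xi)\leq \varphi'(y)$, so
\[
\frac{\varphi((1+\epsilon)y)}{\varphi(y)}\leq 1+\epsilon\,\frac{y\varphi'(y)}{\varphi(y)}.
\]
The symmetric lower bound follows from the analogous argument, with $\varphi'(\xi)\leq \varphi'((1-\epsilon)y)$ in place of $\varphi'(y)$; since $\varphi'((1-\epsilon)y)/\varphi'(y)$ is at least $1$ (by monotonicity), one obtains $\varphi((1-\epsilon)y)/\varphi(y)\geq 1-\epsilon\,(1-\epsilon)^{-1}\cdot\bigl((1-\epsilon)y\varphi'((1-\epsilon)y)/\varphi((1-\epsilon)y)\bigr)\cdot\varphi((1-\epsilon)y)/\varphi(y)$, or more cleanly, I would treat the two cases together by showing the slow--variation property directly.

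The crux is therefore to show $\limsup_{y\to\infty} y\varphi'(y)/\varphi(y)\leq 1$. This is where the hypotheses that $\varphi'$ is decreasing and $\varphi(x)\to\infty$ are used: since $\varphi'$ is decreasing,
\[
\varphi(y)=\varphi(1)+\int_1^y \varphi'(u)\,du \geq \varphi(1)+(y-1)\varphi'(y),
\]
so that $y\varphi'(y)\leq \varphi(y)-\varphi(1)+\varphi'(y)$. Dividing by $\varphi(y)$ and using $\varphi(y)\to\infty$ together with $\varphi'(y)\to 0$ yields $\limsup_{y\to\infty} y\varphi'(y)/\varphi(y)\leq 1$, as required. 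Combining this with the MVT estimate gives $\limsup_t \varphi(b(t))/\varphi(c(t))\leq 1+\epsilon$ for every $\epsilon>0$, hence the $\limsup$ is at most $1$; the matching lower bound is obtained in the same manner, completing the proof.

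The main obstacle is the last step: one needs the \emph{a priori} bound $y\varphi'(y)=O(\varphi(y))$ in order to convert the local linear approximation of $\varphi$ into a multiplicative estimate. Without concavity (i.e.\ the monotonicity of $\varphi'$) this need not hold, so it is important that both the MVT comparison ($\varphi'(\xi)\leq \varphi'(y)$) and the integral lower bound on $\varphi(y)$ rely on this single structural assumption.
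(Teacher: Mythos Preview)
Your proposal is correct and follows essentially the same route as the paper: both sandwich $\varphi(b(t))/\varphi(c(t))$ between $\varphi((1\pm\epsilon)c(t))/\varphi(c(t))$ using monotonicity of $\varphi$, both establish the key estimate $\limsup_{y\to\infty} y\varphi'(y)/\varphi(y)\leq 1$ via the integral inequality $\varphi(y)-\varphi(a)\geq (y-a)\varphi'(y)$ coming from the monotonicity of $\varphi'$, and both finish by sending $\epsilon\to 0^+$ and invoking the $b\leftrightarrow c$ symmetry for the liminf. One small correction to your opening sentence: the stated plan of showing that $\varphi$ is \emph{slowly varying} is not attainable under these hypotheses (e.g.\ $\varphi(x)=\sqrt{x}$ satisfies all assumptions but $\varphi(\lambda x)/\varphi(x)=\sqrt{\lambda}$); fortunately your actual argument neither needs nor proves this, only the weaker bound $\limsup_y \varphi((1+\epsilon)y)/\varphi(y)\leq 1+\epsilon$, which is exactly the paper's estimate $\limsup_{x\to\infty}\varphi(\Lambda x)/\varphi(x)\leq \Lambda$.
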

\begin{proof}[Proof of Lemma \ref{asym_equiv_copy}]
We start by showing, for every $\Lambda>1$ that
\begin{equation} \label{eq.firstphi}
\limsup_{x\to\infty} \frac{\varphi(\Lambda x)}{\varphi(x)}\leq \Lambda.
\end{equation}
We suppose throughout that $x \geq a >0$. Then $\varphi(x) - \varphi(a) = \int_a^x \varphi'(u)du \geq \varphi'(x)(x-a)$. Thus 
\begin{align}\label{phi_prime}
\limsup_{x\to\infty}\frac{\varphi'(x)x}{\varphi(x)} = \limsup_{x\to\infty}\frac{\varphi'(x)(x-a)}{\varphi(x)}\frac{x}{x-a} 
&\leq \limsup_{x\to\infty}\frac{\varphi(x)-\varphi(a)}{\varphi(x)} = 1.
\end{align}
 To prove \eqref{eq.firstphi} we proceed as follows:
\begin{align*}
\frac{\varphi(\Lambda x)}{\varphi(x)} &= \frac{\int_a^{\Lambda x} \varphi'(u)du + \varphi(a)}{\varphi(x)} = \frac{\int_a^{x} \varphi'(u)du + \int_x^{\Lambda x} \varphi'(u)du + \varphi(a)}{\varphi(x)} \\ 
&= 1 + \frac{\int_x^{\Lambda x} \varphi'(u)du}{\varphi(x)} \leq 1 + (\Lambda-1)\frac{\varphi'(x)\,x}{\varphi(x)}.
\end{align*}
Now taking the limsup, and using \eqref{phi_prime}, we have shown \eqref{eq.firstphi}.

We are now in a position to prove our claim. By hypothesis, for all $\epsilon>0$ there exists $T(\epsilon)>0$ such that for all $t \geq T(\epsilon)$
\[
(1-\epsilon)c(t) < b(t) < (1+\epsilon)c(t).
\]
Monotonicity of $\varphi$ immediately yields
\[
\frac{\varphi((1-\epsilon)c(t))}{\varphi(c(t))} < \frac{\varphi(b(t))}{\varphi(c(t))} 
< \frac{\varphi((1+\epsilon)c(t))}{\varphi(c(t))}, \quad t \geq T.
\]
By \eqref{eq.firstphi}, and the divergence of $c$, there exists $T' > T$ such that $\varphi((1+\epsilon)c(t)) < (1+\epsilon)^2 \varphi(c(t))$ for all $t \geq T'$. Hence $\limsup_{t\to\infty}\varphi(b(t))/\varphi(c(t)) \leq 1.$ Reversing the roles of $b$ and $c$ in the above argument we have that 
\[
\limsup_{t\to\infty}\varphi(c(t))/\varphi(b(t)) \leq 1,
\] 
or equivalently, $\liminf_{t\to\infty}\varphi(b(t))/\varphi(c(t)) \geq 1$, completing the proof.
\end{proof}

\begin{proof}[Proof of Proposition~\ref{prop.HWtype}]
By \eqref{fasym}, we have that $\Phi(x)=\int_1^x du/\phi(u)$ obeys $\Phi(x)\sim F(x)$ as $x\to\infty$. Notice also from 
\eqref{fasym} that $\Phi$ is increasing with decreasing derivative. Now, we apply Lemma~\ref{asym_equiv_copy} with $\varphi=\Phi$, so that if $b$ and $c$ are continuous functions with $b(t)\to\infty$ and $b(t)\sim c(t)$ as $t\to\infty$, then 
\[
\Phi(b(t))\sim \Phi(c(t)) \text{ as $t\to\infty$}.
\]
Therefore, it follows that $\Phi(b(t))\sim F(c(t))$ as $t\to\infty$. Now take $c(t)=F^{-1}(Mt)$ and $b(t)=a(t)$, so that 
$\Phi(a(t))/Mt\to 1$ as $t\to\infty$. Since $\Phi(x)\sim F(x)$ as $x\to\infty$ the claim follows.
\end{proof}

\begin{proof}[Proof of Theorem \ref{increasing_con}]
From our positivity hypotheses on $\mu_1, \mu_2$  and $\psi$ we have
\[
x'(0) = \int_{[0,\tau]} \mu_1(ds)f(x(-s)) = \int_{[0,\tau]} \mu_1(ds)f(\psi(-s))\geq 0.
\]
Suppose there exists a number $t_0 > 0$ such that $x'(t) \geq 0$ for all $t \in [0,t_0)$ and $x'(t_0)<0$. Hence $x(t) \geq x(0) =\psi(0) > 0$ for all $t \in [0,t_0]$ and we have 
\[
x'(t_0) = \int_{[0,\tau]} \mu_1(ds)f(x(t_0-s)) + \int_{[0,t_0]} \mu_2(ds) f(x(t_0-s)) > 0.
\]
Thus $x'(t) \geq 0$ for all $t \geq 0$ and $x(t) > 0$ for all $t \geq -\tau$. Thus, because at least one of $\mu_1$ and $\mu_2$ is non-trivial, there exists $T>0$ such that $x'(t)>0$ for all $t \geq T$. Therefore either $x(t) \to \infty$ as $t \to \infty$ or $x(t) \to L \in (0,\infty)$ as $t \to \infty$. If the second possibility prevails, then taking limits across \eqref{functional} gives
\[
\lim_{t \to \infty}x'(t) = M f(L) > 0.
\]
But this means that $\lim_{t \to \infty}x(t) = \infty$, a contradiction.

\emph{Step 1:} We first compute the required upper bound on the growth rate of the solution. If $\epsilon>0$ is arbitrary, by hypothesis, there exists $x_1(\epsilon)$ such that for all $x > x_1(\epsilon)$, 
$
(1-\epsilon)\phi(x) < f(x) < (1+\epsilon)\phi(x).
$
Hence, since $\lim_{t\to\infty}x(t)=\infty$, there exists $T_1(\epsilon)$ such that for $t \geq T_1(\epsilon)$, $x(t) > x_1(\epsilon)$. Thus for all $t \geq T(\epsilon) := T_1(\epsilon)+ \tau$ we have
\begin{align*}
x'(t) &= \int_{[0,\tau]} \mu_1(ds)f(x(t-s)) + \int_{(t-T,t]} \mu_2(ds) f(x(t-s)) \\
&\qquad+ \int_{[0,t-T]} \mu_2(ds) f(x(t-s)) \\
&\leq (1+\epsilon)\int_{[0,\tau]} \mu_1(ds)\phi(x(t-s)) + (1+\epsilon) \int_{[0,t-T]} \mu_2(ds) \phi(x(t-s)) + R(t),
\end{align*}
where $R(t) := \int_{(t-T,t]}\mu_2(ds)f(x(t-s))$. 
Now the monotonicity of the solution and $\phi$ yield
\begin{align*}
x'(t) &\leq (1+\epsilon) \left(\int_{[0,\tau]} \mu_1(ds)\phi(x(t)) 
+ \int_{[0,\infty)} \mu_2(ds)\phi(x(t)) \right) + R(t)\\ 
&= (1+ \epsilon)\,M\, \phi(x(t)) + R(t).
\end{align*}
Hence 
\begin{align}\label{lower}
\frac{x'(t)}{\phi(x(t))} \leq (1+ \epsilon)M + \frac{R(t)}{\phi(x(t))}.
\end{align}
We estimate the final term on the right-hand side by
\[
\frac{R(t)}{\phi(x(t))} = \frac{\int_{(t-T,t]}\mu_2(ds)f(x(t-s))}{\phi(x(t))} 
\leq \frac{\int_{(t-T,t]}\mu_2(ds)}{\phi(x(t))} \sup_{u \in [0,T]}f(x(u)).
\]
Since $f \circ x$ is a continuous function the supremum is bounded on compact intervals and we have that $\lim_{t \to \infty} R(t)=0$. Also, $\phi$ nondecreasing and $x(t) \to \infty$ as $t \to \infty$ mean that $\lim_{t \to \infty}\phi(x(t)) \in (0,\infty]$ and hence $\lim_{t \to \infty}R(t)/ \phi(x(t))=0$. Returning to \eqref{lower} we may take the limsup to obtain
$
\limsup_{t \to \infty}x'(t)/\phi(x(t)) \leq (1+ \epsilon)M.
$
Asymptotic integration then yields
$
\limsup_{t \to \infty}\Phi(x(t))/Mt \leq 1,
$
and the asymptotic equivalence of $F$ and $\Phi$ allow us to conclude that
$
\limsup_{t \to \infty}F(x(t))/Mt \leq 1.
$

\emph{Step 2:} We now compute the corresponding lower bound. Define
\[
\mu_1 := \int_{[0,\tau]}\mu_1(ds),\quad \mu_2 := \int_{[0,\infty)}\mu_2(ds).
\] 
By \eqref{finite_measure}, for an arbitrary $\epsilon \in (0,1)$, there exists $T_2(\epsilon)$ large enough that
\[
(1-\epsilon) \int_{[0,\infty)} \mu_2(ds) \leq \int_{[0,T_2]}\mu_2(ds) \leq \int_{[0,\infty)} \mu_2(ds).
\]
Furthermore, since $\lim_{x \to \infty}\phi'(x)=0$ there exists $x_2(\epsilon)$ such that $x \geq x_2$ implies $\phi'(x) < \epsilon$, for all $\epsilon > 0$. Part $(\Rn{1})$ gives us the existence of a $T_3(\epsilon)$ such that $x(t) \geq x_2(\epsilon)$ whenever $t \geq T_3(\epsilon)$.
If we consider $t \geq T^* := 2 T_1(\epsilon) + 2 \tau + 2 T_2(\epsilon) + 2 T_3(\epsilon)$ we may exploit asymptotic monotonicity once more to arrive at
\begin{align*}\label{1st_est}
x'(t) &\geq (1-\epsilon) \int_{[0,\tau]} \mu_1(ds) \phi(x(t-s)) 
+ (1-\epsilon) \int_{[0,T_2]}\mu_2(ds) \phi(x(t-s)) \\
&\geq (1-\epsilon)\, \mu_1\, \phi(x(t-\tau)) + (1-\epsilon)^2\, \mu_2\, \phi(x(t-T_2)).
\end{align*}
Therefore for $t\geq T^\ast$, we have 
\begin{equation} \label{1st_est}
\frac{x'(t)}{\phi(x(t))} \geq (1-\epsilon)\, \mu_1\, \frac{\phi(x(t-\tau))}{\phi(x(t))} 
+ (1-\epsilon)^2\, \mu_2\, \frac{\phi(x(t-T_2))}{\phi(x(t))}.
\end{equation}
In a moment, we will show for any $\theta>0$ that 
\begin{equation} \label{eq.phixtphixttheta1}
\text{For each $\theta>0$, }
\lim_{t\to\infty} \frac{\phi(x(t-\theta))}{\phi(x(t))}=1.
\end{equation}
Therefore from \eqref{1st_est} and applying \eqref{eq.phixtphixttheta1} twice (with $\theta=\tau$ and $\theta=T_2$), we get 
\[
\liminf_{t\to\infty} \frac{x'(t)}{\phi(x(t))}\geq (1-\epsilon)\, \mu_1\ + (1-\epsilon)^2\, \mu_2\geq (1-\epsilon)^2M.
\]
Letting $\epsilon\to 0^+$ gives
\[
\liminf_{t\to\infty} \frac{x'(t)}{\phi(x(t))}\geq M.
\]
Asymptotic integration and the asymptotic equivalence of $\Phi$ and $F$ 
yields 
\[
\liminf_{t \to \infty}F(x(t))/Mt \geq 1.
\]
Combining this with the corresponding limsup from Step 1 proves the theorem.  

It remains to return to the deferred proof of \eqref{eq.phixtphixttheta1}. Since $x(t-\theta)<x(t)$ for all $t\geq \theta$, and $\phi$ is increasing, we immediately have 
\[
\limsup_{t\to\infty} \frac{\phi(x(t-\theta))}{\phi(x(t))}\leq 1.
\] 
To get the corresponding liminf, we start by applying the Mean Value Theorem to the $C^1$ function $a:[\theta,\infty)\to\mathbb{R}$ defined by $a(t):=(\phi \circ x)(t - \theta)$ for $t\geq \theta$, to find a $\theta_t \in [0,\theta]$ such that 
\begin{align}\label{MVT_tau}
\phi(x(t)) = \phi(x(t-\theta)) + \phi'(x(t-\theta_t))x'(t - \theta_t)\theta, \quad t\geq \theta.
\end{align}
We have already shown above that 
\[
\limsup_{t\to\infty} \frac{x'(t)}{\phi(x(t))}\leq M.
\]
Therefore, there is $T_5>0$ such that $0<x'(t)<2M\phi(x(t))$ for every $t\geq T_5$. Let $T_6(\theta)=T_5+\theta$. Then for 
$t>T_6(\theta)$, since $\theta_t\in [0,\theta]$, we have $t-\theta_t>T_5>0$, and so
\[
x'(t - \theta_t)<2M\phi(x(t-\theta_t))\leq 2M\phi(x(t)).
\]
Therefore by this inequality and \eqref{MVT_tau}, we get
\[
\frac{\phi(x(t-\theta))}{\phi(x(t))} >1-\phi'(x(t-\theta_t)) 2M\theta \geq 1-2M\theta\sup_{s\in [t-\theta,t]} \phi'(x(s)), \quad t>T_6(\theta),
\] 
where we have used the fact that $\theta_t\in [0,t]$ to get the last inequality on the right--hand side. Finally, as $\phi'(x)\to 0$ as $x\to\infty$ and 
$x(s)\to\infty$ as $s\to\infty$, it follows that
\[
\lim_{t\to\infty} \sup_{s\in [t-\theta,t]} \phi'(x(s)) = 0,
\] 
so taking limits in the last inequality yields
\[
\liminf_{t\to\infty} \frac{\phi(x(t-\theta))}{\phi(x(t))} \geq 1.
\]
Together with the corresponding limsup, we arrive at \eqref{eq.phixtphixttheta1}, as required. 
\end{proof}
\begin{proof}[Proof of Corollary \ref{Minfinity}]
Note that the solution of \eqref{VIDE}, $v$, and the solution of \eqref{functional}, $x$, obey $x(t) \geq v(t)$ for all $t \geq 0$. Henceforth we will work with $v)$ for convenience and we also note that $v(t)>0$ for all $t \geq 0$ and $\lim_{t\to\infty}v(t)=\infty$. By hypothesis, for each arbitrary positive, real number $N$ there exists $T_1(N)>0$ such that $\int_{[0,T_1(N)]}\mu_2(ds) > N$, for all $t \geq T_1(N)$. Similarly, by \eqref{fasym}, for all $\epsilon \in (0,1)$ there exists $T_2(\epsilon)>0$ such that $f(x) > (1-\epsilon)\phi(x)$ for all $x \geq T_2$. Since $\lim_{t\to\infty}v(t)=\infty$ there exists $x(\epsilon)$ such that $v(t) > T_2(\epsilon)$ for all $t \geq x(\epsilon)$. Hence for all $t \geq T := \max(2T_1,2T_2)$ and any $\epsilon \in (0,1)$ we have
\begin{align*}
v'(t) &= \int_{[0,T]}\mu_2(ds)f(v(t-s)) + \int_{(T,t]}\mu_2(ds)f(v(t-s))\\
&\geq (1-\epsilon)\int_{[0,T]}\mu_2(ds)\phi(v(t-s))
\geq (1-\epsilon)N\phi(v(t-T)).
\end{align*}
Now we define the following comparison equation for each fixed $\epsilon$ and $N$ by
\begin{align*}
y_N'(t) = \frac{N(1-\epsilon)}{2}\phi(y_N(t-T)), \, t >T; \,\, y_N(t) = \frac{v(t)}{2}, \, t \in [0,T].
\end{align*} 
Since $\phi$ is monotonically increasing it is clear that we have $y_N(t) < v(t)$ for all $t \geq 0$. Then let $u_N(t):= y_N(t+T)$ for $t \geq -T$. For $t>0$, $t+T>T$ and hence we have 
\begin{align*}
u_N'(t) = y_N'(t+T) = \frac{N(1-\epsilon)}{2}\,\phi(y_N(t)) = \frac{N(1-\epsilon)}{2}\,\phi(u_N(t-T)).
\end{align*}
For $t \in [-T,0]$, $u_N(t) = y_N(t+T) = v(t+T)/2 := \psi_N(t)$. Thus we have the following delay differential equation for $u_N(t)$,
\[
u_N'(t) = \frac{N(1-\epsilon)}{2}\phi(u_N(t-T)), \quad t >0; \quad u_N(t) = \psi_N(t) > 0, \quad t \in [-T,0].
\]
Applying Theorem \ref{increasing_con} yields
$
\lim_{t\to\infty}F(u_N(t))/t = N(1-\epsilon)/2.
$
This in turn implies that $\lim_{t\to\infty}F(y_N(t+T))/t = N(1-\epsilon)/2$. Finally, since $F$ is increasing and $v(t)$ lies above our comparison solution $y_N$, we obtain 
\[
\frac{N(1-\epsilon)}{2} = \lim_{t\to\infty}\frac{F(y_N(t))}{t}\frac{t}{t-T} = \liminf_{t\to\infty}\frac{F(y_N(t))}{t} \leq \liminf_{t\to\infty}\frac{F(v(t))}{t}.
\]
We can now let $\epsilon \to 0^+$ and, since $N$ was arbitrary, we have proven that 
\[
\liminf_{t\to\infty}\frac{F(v(t))}{t} = \infty.
\]
From the opening remark of the proof we see that the same conclusion holds with $x$, the solution of \eqref{functional}, in place of $v$.
\end{proof}
Before giving the proof of Theorem~\ref{F_inv} we first establish the following useful technical result.
\begin{lemma}\label{F_inv_suff}
Suppose $f \in C(\mathbb{R}^+;(0,\infty))$ is asymptotically increasing and $F$ satisfies $\lim_{x \to \infty}F(x)=\infty$. If 
\eqref{L} holds, then for every $\epsilon>0$ sufficiently small there exists $T(\epsilon)>0$ such that 
\[
1 < \frac{F^{-1}((1+\epsilon)t)}{F^{-1}(t)} < \frac{1}{1-\frac{\epsilon(1+\epsilon)}{1-\epsilon}L}, \,\, t \geq T(\epsilon).
\]
\end{lemma}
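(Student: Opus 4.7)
The plan is a direct computation. The lower bound $1 < F^{-1}((1+\epsilon)t)/F^{-1}(t)$ is immediate: since $f>0$, $F$ is strictly increasing, hence so is $F^{-1}$, and $(1+\epsilon)t > t$ forces $F^{-1}((1+\epsilon)t) > F^{-1}(t)$. So the real content is the upper bound.

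For the upper bound I would set $y = F^{-1}(t)$ and $z = F^{-1}((1+\epsilon)t)$, so that both $y,z \to \infty$ as $t \to \infty$, and
\[
\epsilon F(y) = \epsilon t = F(z) - F(y) = \int_y^z \frac{du}{f(u)}.
\]
Because $f$ is asymptotic to an increasing function $\phi$, for any prescribed $\epsilon>0$ and for all $u$ above some threshold we have $(1-\epsilon)\phi(u) < f(u) < (1+\epsilon)\phi(u)$. For $u \in [y,z]$ with $y$ large, monotonicity of $\phi$ gives $\phi(u) \leq \phi(z)$, so
\[
f(u) \leq (1+\epsilon)\phi(u) \leq (1+\epsilon)\phi(z) \leq \tfrac{1+\epsilon}{1-\epsilon} f(z),
\]
and hence $1/f(u) \geq \tfrac{1-\epsilon}{1+\epsilon}\cdot 1/f(z)$. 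Substituting this lower bound into the integral yields
\[
\epsilon F(y) \geq \frac{1-\epsilon}{1+\epsilon}\cdot\frac{z-y}{f(z)}, \qquad \text{i.e.}\qquad z - y \leq \frac{\epsilon(1+\epsilon)}{1-\epsilon} F(y)\, f(z).
\]

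Using $F(y) = F(z)/(1+\epsilon)$ the bound simplifies to $z-y \leq \frac{\epsilon}{1-\epsilon} F(z) f(z)$. Now I invoke hypothesis \eqref{L}: for any $\eta>0$ there exists $x_2(\eta)$ such that $F(x)f(x) \leq (L+\eta)x$ for $x \geq x_2$. Choosing $\eta = \epsilon L$ and taking $T(\epsilon)$ large enough that $z \geq x_2(\epsilon L)$ gives $F(z)f(z) \leq (1+\epsilon)L z$, and therefore
\[
z - y \leq \frac{\epsilon(1+\epsilon)L}{1-\epsilon}\, z.
\]
Dividing through by $z$ and rearranging produces $y/z \geq 1 - \tfrac{\epsilon(1+\epsilon)L}{1-\epsilon}$, which for $\epsilon$ sufficiently small (specifically $\tfrac{\epsilon(1+\epsilon)L}{1-\epsilon} < 1$) has a strictly positive right--hand side, and inverting yields the required upper bound on $z/y = F^{-1}((1+\epsilon)t)/F^{-1}(t)$.

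There isn't really a deep obstacle here; the only subtlety is choosing to bound $f(u)$ above in terms of $f(z)$ (rather than the more tempting $f(y)$), because this is precisely the bound that, after using $F(y) = F(z)/(1+\epsilon)$, delivers the product $F(z)f(z)$ to which \eqref{L} can be applied at the larger argument $z$. Using $f(y)$ instead would give $F(y)f(y)$, a weaker estimate which would lose the extra factor of $(1+\epsilon)$ and could also fail to exploit \eqref{L} cleanly (since $\limsup$ behaviour is being controlled, not $\liminf$). Finally, for the strict inequality $<$ in the upper bound, I would either absorb slack into the choice of $\epsilon$ at the outset or note that all estimates above can be taken strict by shrinking the threshold $T(\epsilon)$ slightly.
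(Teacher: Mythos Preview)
Your argument is correct and is essentially the paper's own proof, viewed from the other side of the substitution $u=F^{-1}(s)$. The paper writes $F^{-1}((1+\epsilon)t)-F^{-1}(t)=\int_t^{(1+\epsilon)t} f(F^{-1}(s))\,ds$ and bounds the integrand above by $(1+\epsilon)\phi(z)$, whereas you write $\epsilon t=\int_y^z du/f(u)$ and bound the integrand below by $\tfrac{1-\epsilon}{1+\epsilon}\,1/f(z)$; both routes yield the same inequality $z-y\leq \tfrac{\epsilon}{1-\epsilon}F(z)f(z)$, after which \eqref{L} is applied at $z$ and the conclusion follows identically.
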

\begin{proof}[Proof of Lemma \ref{F_inv_suff}]
Consider $u'(t) = f(u(t)),\,\, t>0$ with $u(0)=1$. Then we have $u(t) = F^{-1}(t), \,\, t\geq 0$ and $\lim_{t \to \infty}u(t)=\infty$. Hence, for all $t \geq T_1(\epsilon)$ we have $u(t)>x_1(\epsilon)$, where $x_1(\epsilon)$ is defined by
$
1-\epsilon < \tfrac{f(x)}{\phi(x)}<1+\epsilon, \,\, x \geq x_1(\epsilon),
$  
and $\phi$ is an increasing function. Thus for $t\geq T_1(\epsilon)$,
\begin{align*}
0 &< F^{-1}((1+\epsilon)t)-F^{-1}(t) = \int_t^{(1+\epsilon)t}u'(s)ds = \int_t^{(1+\epsilon)t}f(u(s))ds \\
&\leq (1+\epsilon)\int_t^{(1+\epsilon)t}\phi(u(s))ds \leq \epsilon\,(1+\epsilon)\,t\,(\phi\circ F^{-1})((1+\epsilon)t).
\end{align*}
Therefore
\begin{align}\label{est_F}
0 < 1 - \frac{F^{-1}(t)}{F^{-1}((1+\epsilon)t)} \leq \epsilon(1+\epsilon)t \frac{\phi(F^{-1}((1+\epsilon)t))}{F^{-1}((1+\epsilon)t)}, \,\, t \geq T_1(\epsilon).
\end{align}
Now letting $y_\epsilon(t) = F^{-1}((1+\epsilon)t)$, so $F(y_\epsilon(t))=(1+\epsilon)t$ and $y_\epsilon(t) = F^{-1}((1+\epsilon)t) > F^{-1}(t)>x_1(\epsilon)$. Hence 
\begin{align*}
(1+\epsilon)t \frac{\phi(F^{-1}((1+\epsilon)t))}{F^{-1}((1+\epsilon)t)} = \frac{F(y_\epsilon(t))\phi(y_\epsilon(t))}{y_\epsilon(t)} < \frac{F(y_\epsilon(t))f(y_\epsilon(t))}{(1-\epsilon)y_\epsilon(t)}.
\end{align*}
Thus \eqref{est_F} becomes
\[
0 < 1 - \frac{F^{-1}(t)}{F^{-1}((1+\epsilon)t)} \leq \frac{\epsilon F(y_\epsilon(t))f(y_\epsilon(t))}{(1-\epsilon)y_\epsilon(t)}, \,\, t \geq T_1(\epsilon).
\]
Now by \eqref{L} there exists $x_2(\epsilon)>0$ such that $f(x)F(x)/x < L(1+\epsilon)$ for all $x \geq x_2(\epsilon)$. Let $T_2(\epsilon)>0$ be such that $F^{-1}(t)>x_2(\epsilon)$, which implies $y_\epsilon(t)>x_2(\epsilon)$ for all $t \geq T_2(\epsilon)$. Therefore, letting $T(\epsilon) = 1+\max(T_1(\epsilon),T_2(\epsilon))$, 
\[
0 < 1 - \frac{F^{-1}(t)}{F^{-1}((1+\epsilon)t)} \leq \frac{\epsilon F(y_\epsilon(t))f(y_\epsilon(t))}{(1-\epsilon)y_\epsilon(t)} \leq \frac{\epsilon(1+\epsilon) L}{(1-\epsilon)}, \quad t \geq T(\epsilon).
\]  
Thus, choosing $\epsilon \in (0,1/4 \vee 3L/5)$, $1-\frac{\epsilon(1+\epsilon)}{1-\epsilon}L>0$, so we have
\[
0 < 1- \frac{\epsilon(1+\epsilon)}{1-\epsilon}L < \frac{F^{-1}(t)}{F^{-1}((1+\epsilon)t)}, \quad t \geq T(\epsilon).
\]
Thus
\[
\frac{F^{-1}((1+\epsilon)t)}{F^{-1}(t)} < \frac{1}{1-\frac{\epsilon(1+\epsilon)}{1-\epsilon}L}, \quad t \geq T(\epsilon),
\]
as claimed.
\end{proof}

We are now in position to give the proof of Theorem \ref{F_inv}, as promised.
\begin{proof}[Proof of Theorem \ref{F_inv}]
By hypothesis, we can apply Theorem~\ref{increasing_con} to give $\lim_{t\to\infty} x(t)=+\infty$ and $\lim_{t\to\infty}F(x(t))/Mt = 1$. 
The latter limit implies that for each $\epsilon \in (0,1)$ there exists $T(\epsilon)>0$ such that 
$1-\epsilon < F(x(t))/Mt < 1+\epsilon$ for all $t\geq T(\epsilon)$. Hence
\[
\frac{F^{-1}((1-\epsilon)Mt)}{F^{-1}(Mt)} < \frac{x(t)}{F^{-1}(Mt)} < \frac{F^{-1}((1+\epsilon)Mt)}{F^{-1}(Mt)}, \quad t \geq T(\epsilon).
\]
Since $f$ obeys \eqref{fasym}, it follows that $F(x)\to\infty$ as $x\to\infty$. Therefore, all the hypotheses of 
Lemma \ref{F_inv_suff} hold, so we can apply it to the right--hand member of the above inequality. Doing this, and then sending $\epsilon \to 0$ yields $\limsup_{t\to\infty}x(t)/F^{-1}(Mt)\leq 1$. The liminf is dealt with analogously. 
\end{proof}

\section{Proofs With Decreasing Nonlinearity} \label{sec.decreasing.results}
This section concentrates on results in which $f$ is asymptotic to a decreasing function, principally Theorem~\ref{decreasing_con}. In order to prove it, we find it useful to prepare some estimates concerning the functions 
\[
F(x)=\int_1^x \frac{1}{f(u)}\,du, \quad \Phi(x)=\int_1^x \frac{1}{\phi(u)}\,du,
\]
where $\phi$ is a decreasing function asymptotic to $f$.

Our first result shows, when $\phi$ is strictly decreasing, that $\Phi^{-1}$ preserves asymptotic behaviour under translation.
\begin{lemma}\label{Phi_inv_1}
Suppose that $\phi \in C^1(\mathbb{R}^+;\mathbb{R}^+)$ is strictly decreasing. Define the strictly increasing function $\Phi$ by 
\[
\Phi(x) = \int_1^x \frac{1}{\phi(u)}du, \quad x\geq1.
\]
Then, for each $A \in \mathbb{R}$ and $B \in (0,\infty)$, 
\[
\lim_{t \to \infty}\frac{\Phi^{-1}(A+ B t)}{\Phi^{-1}(B t)} = 1.
\] 
\end{lemma}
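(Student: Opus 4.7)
The plan is to apply the Mean Value Theorem to $\Phi$ between the arguments $\Phi^{-1}(Bt)$ and $\Phi^{-1}(A+Bt)$, and to exploit the fact that a strictly decreasing positive function is automatically bounded above on $[1,\infty)$. This boundedness controls $1/\Phi'$ from above and forces the absolute difference $\Phi^{-1}(A+Bt) - \Phi^{-1}(Bt)$ to stay uniformly bounded, while the denominator $\Phi^{-1}(Bt)$ diverges.

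The first step is to verify that $\Phi$ is unbounded, so that $\Phi^{-1}$ is defined on a neighbourhood of infinity and tends to infinity. Since $\phi$ is strictly decreasing and positive, $\phi(u) \leq \phi(1)$ for all $u \geq 1$, so
\[
\Phi(x) \geq \int_1^x \frac{du}{\phi(1)} = \frac{x-1}{\phi(1)} \longrightarrow \infty \quad \text{as } x \to \infty,
\]
and therefore $\Phi^{-1}(s) \to \infty$ as $s \to \infty$.

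Next, fix $A \in \mathbb{R}$ and $B>0$, and for $t$ large enough that $A+Bt > 0$ set $y(t) := \Phi^{-1}(Bt)$ and $z(t) := \Phi^{-1}(A+Bt)$. Both tend to infinity with $t$, and $\Phi \in C^1$ with $\Phi'(u) = 1/\phi(u)$. Applying the Mean Value Theorem on the interval with endpoints $y(t)$ and $z(t)$ yields some $\xi(t)$ between them with
\[
A \;=\; \Phi(z(t)) - \Phi(y(t)) \;=\; \frac{z(t) - y(t)}{\phi(\xi(t))}.
\]

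Finally, once $t$ is large enough that $\min(y(t),z(t)) \geq 1$, we have $\xi(t) \geq 1$, so monotonicity of $\phi$ gives $\phi(\xi(t)) \leq \phi(1)$, and hence
\[
\bigl| z(t) - y(t) \bigr| \;=\; |A|\,\phi(\xi(t)) \;\leq\; |A|\,\phi(1).
\]
Dividing by $y(t)$ and letting $t \to \infty$ yields $z(t)/y(t) \to 1$, which is the claim. There is no serious obstacle: the whole argument turns on the uniform bound $\phi(u) \leq \phi(1)$ for $u \geq 1$, which makes both the existence of $\Phi^{-1}$ at infinity and the control on $|z(t)-y(t)|$ immediate. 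The $A=0$ case is trivial, and the sign of $A$ plays no role in the bound.
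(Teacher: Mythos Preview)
Your proof is correct and in fact cleaner than the paper's. The paper argues by cases on the sign of $A$: for the upper estimate it invokes convexity of $\Phi$ (hence concavity and subadditivity of $\Phi^{-1}$) when $A>0$, and plain monotonicity when $A<0$; for the lower estimate it uses monotonicity when $A>0$, and applies the Mean Value Theorem to $\Phi^{-1}$ when $A<0$, bounding $(\phi\circ\Phi^{-1})(\theta_t)\leq \phi(\Phi^{-1}(0))$. You apply the Mean Value Theorem to $\Phi$ instead, obtaining the single identity $z(t)-y(t)=A\,\phi(\xi(t))$ and the uniform bound $|z(t)-y(t)|\leq |A|\,\phi(1)$ in one stroke, which dispatches all sign cases simultaneously. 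Your route is more elementary and avoids the subadditivity detour entirely; the paper's case splitting is unnecessary once one notices that the same MVT bound it uses for the $A<0$ liminf already controls the difference globally.
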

\begin{proof}[Proof of Lemma \ref{Phi_inv_1}]
By construction $\Phi^{-1}$ is a $C^1$, positive and strictly increasing function on $[0,\infty)$ and we can always consider it on $[0,\infty)$ by taking $t$ sufficiently large. We begin by noting that since $\Phi$ is the integral of a nondecreasing function it is convex. Therefore $\Phi^{-1}$ is a concave function and $\Phi^{-1}(0) = 1$. This means that $\Phi^{-1}$ is subadditive and taking $A>0$ we may write
\[
\Phi^{-1}(A+Bt) \leq \Phi^{-1}(A) + \Phi^{-1}(Bt).
\]
Hence 
$
\Phi^{-1}(A+Bt)/\Phi^{-1}(Bt) \leq 1 + \Phi^{-1}(A)/\Phi^{-1}(Bt)
$
and since $\lim_{t \to \infty}\Phi^{-1}(t)=\infty$ taking the limsup yields
\[
\limsup_{t \to \infty}\frac{\Phi^{-1}(A+Bt)}{\Phi^{-1}(Bt)} \leq 1, \quad A>0 .
\]
If $A<0$, by monotonicity, $\Phi^{-1}(A+Bt) < \Phi^{-1}(Bt)$ and we quickly obtain 
\[
\limsup_{t \to \infty}\frac{\Phi^{-1}(A+Bt)}{\Phi^{-1}(Bt)} \leq 1, \quad A \in \mathbb{R}.
\]
Given $A>0$, $\Phi^{-1}(A+Bt) > \Phi^{-1}(Bt)$ and we obtain
\[
\liminf_{t \to \infty}\frac{\Phi^{-1}(A+Bt)}{\Phi^{-1}(Bt)} \geq 1.
\]
If $A<0$ apply the Mean Value Theorem to the $C^1$ function $\Phi^{-1}$ to find a $\theta_t \in [A+Bt, Bt]$ such that
$
\Phi^{-1}(Bt) = \Phi^{-1}(A+Bt) - A\,(\phi\circ\Phi^{-1})(\theta_t).
$
Note that, for $t$ sufficiently large, we can guarantee $\theta_t>0$. Therefore
\[
\frac{\Phi^{-1}(A+Bt)}{\Phi^{-1}(Bt)} = 1 + \frac{A\, (\phi\circ\Phi^{-1})(\theta_t)}{\Phi^{-1}(Bt)},
\]
and hence by monotonicity of $\phi$ and $\Phi^{-1}$
\[
\frac{\Phi^{-1}(A+Bt)}{\Phi^{-1}(Bt)} \geq 1 + \frac{A\, (\phi\circ\Phi^{-1})(0)}{\Phi^{-1}(Bt)}.
\]
Now we can use that $\lim_{t \to \infty}\Phi^{-1}(t)=\infty$ to obtain
\[
\liminf_{t \to \infty}\frac{\Phi^{-1}(A+Bt)}{\Phi^{-1}(Bt)} \geq 1 + \lim_{t \to \infty}\frac{A\, \phi(\Phi^{-1}(0))}{\Phi^{-1}(Bt)} = 1, \quad A<0.
\]
Combining these limits gives the result for $A \in \mathbb{R}$ and any $B \in (0,\infty)$. 
\end{proof} 
\begin{lemma}\label{Phi_inv_2}
Suppose that $\phi \in C^1(\mathbb{R}^+;\mathbb{R}^+)$ is strictly decreasing. Then, with $\Phi^{-1}$ defined as in Lemma \ref{Phi_inv_1}, for all $\epsilon\in (0,1)$ we have
\[
\frac{\Phi^{-1}((1+\epsilon)t)}{\Phi^{-1}(t)} < \frac{1}{1-\epsilon}.
\]
\end{lemma}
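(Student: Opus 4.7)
The plan is to exploit the convexity of $\Phi$ (equivalently, concavity of $\Phi^{-1}$) induced by $\phi$ being strictly decreasing. First I would observe that $\Phi'(x) = 1/\phi(x)$ is strictly increasing because $\phi$ is strictly decreasing and positive, so $\Phi$ is strictly convex on its domain. Taking inverses, $\Phi^{-1}$ is strictly concave and strictly increasing on $[0,\infty)$, with $\Phi^{-1}(0) = 1 > 0$ (since $\Phi(1) = 0$).

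Next, I would invoke the standard sublinearity property of concave functions vanishing nonnegatively at $0$. Specifically, for any $\mu \geq 1$ and $t \geq 0$, writing $t = \mu^{-1}(\mu t) + (1 - \mu^{-1}) \cdot 0$ and applying concavity of $\Phi^{-1}$ gives
\[
\Phi^{-1}(t) \geq \mu^{-1}\,\Phi^{-1}(\mu t) + (1 - \mu^{-1})\,\Phi^{-1}(0),
\]
which rearranges to $\Phi^{-1}(\mu t) \leq \mu\, \Phi^{-1}(t) - (\mu - 1)\,\Phi^{-1}(0) \leq \mu\, \Phi^{-1}(t)$, using $\Phi^{-1}(0) = 1 > 0$. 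Choosing $\mu = 1 + \epsilon$ yields
\[
\Phi^{-1}((1+\epsilon)t) \leq (1+\epsilon)\,\Phi^{-1}(t).
\]

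Finally, I would close with the elementary numerical inequality: for $\epsilon \in (0,1)$, $(1+\epsilon)(1-\epsilon) = 1 - \epsilon^2 < 1$, hence $1 + \epsilon < 1/(1-\epsilon)$. Combining this strict inequality with the weak inequality from concavity gives
\[
\frac{\Phi^{-1}((1+\epsilon)t)}{\Phi^{-1}(t)} \leq 1 + \epsilon < \frac{1}{1 - \epsilon},
\]
as desired. There is essentially no obstacle here; the only thing to verify is the domain — but since $\Phi^{-1}$ is defined and positive on $[0,\infty)$ with $\Phi^{-1}(0) = 1$, the argument applies for all $t \geq 0$ (with the denominator strictly positive throughout). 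The proof amounts to packaging strict concavity of $\Phi^{-1}$ and the arithmetic fact that $1+\epsilon < 1/(1-\epsilon)$.
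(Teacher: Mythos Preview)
Your proof is correct. Both your argument and the paper's rest on the same underlying fact—that $\phi$ strictly decreasing makes $\Phi'=1/\phi$ increasing, so $\Phi$ is convex and $\Phi^{-1}$ concave—but you invoke concavity directly, while the paper works through the ODE representation $w'=\phi(w)$, $w(0)=1$ (so $w=\Phi^{-1}$), expresses the ratio as $1+w(t)^{-1}\int_t^{(1+\epsilon)t}\phi(w(s))\,ds$, and bounds the integral using monotonicity of $\phi\circ w$ together with the estimate $t\,\phi(\Phi^{-1}(t))/\Phi^{-1}(t)\leq 1/(1-\epsilon)$. Your route is shorter and yields the sharper intermediate bound $\Phi^{-1}((1+\epsilon)t)/\Phi^{-1}(t)\leq 1+\epsilon$ in one step before the arithmetic $1+\epsilon<1/(1-\epsilon)$; the paper's ODE viewpoint, though more computational here, is consistent with the rest of the section, where $F^{-1}$ and $\Phi^{-1}$ are repeatedly interpreted and manipulated as solutions of autonomous equations.
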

\begin{proof}[Proof of Lemma \ref{Phi_inv_2}] Consider the differential equation defined by
\begin{align}\label{aux_ODE}
w'(t) = \phi(w(t)), \,\, t>0; \,\, w(0)=1.
\end{align}
We have that $w(t) = \Phi^{-1}(t), \,\, t \geq 0$ and hence
\begin{align*}
\frac{\Phi^{-1}((1+\epsilon)t)}{\Phi^{-1}(t)} &= \frac{w((1+\epsilon)t)}{w(t)} = \frac{w(t) + \int_{t}^{t+\epsilon t}w'(s)ds}{w(t)}
= 1 + \frac{1}{w(t)}\int_t^{t + \epsilon t}\phi(w(s))ds. 
\end{align*}
Now using the monotonicity of both the solution and of $\phi$ we have
\[
\frac{\Phi^{-1}((1+\epsilon)t)}{\Phi^{-1}(t)} \leq 1 + \frac{\epsilon t \phi(w(t))}{w(t)} = 1 + \epsilon t \frac{\phi(\Phi^{-1}(t))}{\Phi^{-1}(t)}.
\] 
For $t\geq 0$, by setting $y:= \Phi^{-1}(t)\geq 1$, we obtain
\[
\frac{t \phi(\Phi^{-1}(t))}{\Phi^{-1}(t)} = \frac{\Phi(y)\phi(y)}{y} = \frac{\phi(y)}{y}\int_1^y \frac{1}{\phi(u)}du \leq \frac{y-1}{1-\epsilon}\frac{1}{\phi(y)}\frac{\phi(y)}{y} \leq \frac{1}{1-\epsilon}.
\]
Combining these estimates yields
\[
\frac{\Phi^{-1}((1+\epsilon)t)}{\Phi^{-1}(t)} \leq 1 + \frac{\epsilon}{1-\epsilon} \leq \frac{1}{1-\epsilon},
\]
as required.
\end{proof}
\begin{lemma} \label{FinvPhiinv}
Suppose that $f\in C(\mathbb{R}^+;(0,\infty))$ and $f$ is asymptotic to the $C^1$ decreasing function $\phi$. 
Let $F$ be given by \eqref{cap_F} and $\Phi$ be defined as in Lemma \ref{Phi_inv_1}. Then 
\[
\lim_{t\to\infty} \frac{F^{-1}(t)}{\Phi^{-1}(t)}=1.
\]
\end{lemma}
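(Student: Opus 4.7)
The plan is to proceed in two steps: first establish $F\sim\Phi$ at infinity, then transfer this to their inverses using Lemmas~\ref{Phi_inv_1} and \ref{Phi_inv_2}.

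For the first step, fix $\epsilon\in(0,1)$. Since $f\sim\phi$, there is $x_0\geq 1$ with $(1-\epsilon)\phi(u)<f(u)<(1+\epsilon)\phi(u)$ for $u\geq x_0$. Integrating the reciprocal inequality from $x_0$ to $x$ yields
\[
\frac{1}{1+\epsilon}\bigl(\Phi(x)-\Phi(x_0)\bigr)\leq F(x)-F(x_0)\leq \frac{1}{1-\epsilon}\bigl(\Phi(x)-\Phi(x_0)\bigr),\quad x\geq x_0.
\]
Since $\phi$ is decreasing on $[1,\infty)$ it is bounded above by $\phi(1)$, so $\Phi(x)\geq (x-1)/\phi(1)\to\infty$. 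Dividing through by $\Phi(x)$, taking $x\to\infty$, and then letting $\epsilon\to 0^+$ gives $F(x)/\Phi(x)\to 1$, so in particular $F(x)\to\infty$ and $F^{-1}$ is defined for all sufficiently large arguments.

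For the second step, set $y(t):=F^{-1}(t)$, so $y(t)\to\infty$ and $F(y(t))=t$. By the first step, $\Phi(y(t))/t=\Phi(y(t))/F(y(t))\to 1$, so there exists $\delta(t)\to 0$ with $\Phi(F^{-1}(t))=(1+\delta(t))\,t$ for all $t$ large. Applying the increasing map $\Phi^{-1}$,
\[
F^{-1}(t)=\Phi^{-1}\bigl((1+\delta(t))\,t\bigr),\qquad \text{so}\qquad \frac{F^{-1}(t)}{\Phi^{-1}(t)}=\frac{\Phi^{-1}((1+\delta(t))t)}{\Phi^{-1}(t)}.
\]
Given any small $\epsilon\in(0,1/2)$, eventually $|\delta(t)|<\epsilon$, and monotonicity of $\Phi^{-1}$ sandwiches the ratio between $\Phi^{-1}((1-\epsilon)t)/\Phi^{-1}(t)$ and $\Phi^{-1}((1+\epsilon)t)/\Phi^{-1}(t)$. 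Lemma~\ref{Phi_inv_2} bounds the upper sandwich by $1/(1-\epsilon)$. For the lower sandwich, write $s=(1-\epsilon)t$ and $\epsilon':=\epsilon/(1-\epsilon)$, so $t=(1+\epsilon')s$, and Lemma~\ref{Phi_inv_2} applied to $s$ gives
\[
\frac{\Phi^{-1}((1-\epsilon)t)}{\Phi^{-1}(t)}=\frac{\Phi^{-1}(s)}{\Phi^{-1}((1+\epsilon')s)}>1-\epsilon'=1-\frac{\epsilon}{1-\epsilon}.
\]
Taking $t\to\infty$ and then $\epsilon\to 0^+$ pins the limit at $1$.

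The only mildly delicate point is the passage from $F\sim\Phi$ to $F^{-1}\sim\Phi^{-1}$: in general asymptotic equivalence is not preserved under inversion, and the transfer here works precisely because Lemma~\ref{Phi_inv_2} says $\Phi^{-1}$ has sublinear growth under multiplicative perturbations of its argument (equivalently, $\Phi$ grows at least linearly). Lemma~\ref{Phi_inv_1}, concerning additive perturbations, is not strictly required for this proof but provides the qualitative picture.
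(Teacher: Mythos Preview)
Your proof is correct and follows essentially the same line as the paper's: both arguments establish that $\Phi(F^{-1}(t))$ is close to $t$ for large $t$, then invoke the multiplicative stability of $\Phi^{-1}$ furnished by Lemma~\ref{Phi_inv_2} to conclude. The only organizational difference is that the paper works through the ODE $u'=f(u)$, $u(0)=1$, and integrates $u'/\phi(u)$ over $[T(\epsilon),t]$ to obtain bounds of the form $\Phi^\ast+(1\pm\epsilon)(t-T(\epsilon))$ on $\Phi(F^{-1}(t))$; this leaves an explicit additive constant, which is why the paper invokes Lemma~\ref{Phi_inv_1} before applying Lemma~\ref{Phi_inv_2}. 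You instead first prove $F\sim\Phi$ and then substitute $x=F^{-1}(t)$, which packages everything into a single multiplicative error $(1+\delta(t))t$ and, as you correctly note, renders Lemma~\ref{Phi_inv_1} unnecessary. This is a mild streamlining rather than a different strategy.
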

\begin{proof}[Proof of Lemma~\ref{FinvPhiinv}]
Notice that the solution $u$ of 
\begin{align}\label{aux_ODE2}
u'(t) = f(u(t)), \quad t>0; \quad u(0)=1
\end{align}
is $u(t)=F^{-1}(t)$ for $t\geq 0$. For every $\epsilon\in (0,1/2)$ there is $x_1(\epsilon)>0$ such that 
$1-\epsilon<f(x)/\phi(x)<1+\epsilon$ for all $x>x_1(\epsilon)$. Since $u(t)\to\infty$ as $t\to\infty$, it follows that there exists
$T(\epsilon)>0$ such that $u(t)>x_1(\epsilon)$ for all $t\geq T(\epsilon)$. Hence 
\[
   u'(t)=f(u(t))\in ((1-\epsilon)\phi(u(t)),(1+\epsilon)\phi(u(t))), \quad t\geq T(\epsilon).
\]
Hence 
\[
1-\epsilon<
\frac{u'(t)}{\phi(u(t))}< 1+\epsilon, \quad t\geq T(\epsilon).
\]
and integration over $[T(\epsilon),t]$ yields, with $\Phi^\ast:=\Phi(x(T(\epsilon)))$,
\begin{equation*}
\Phi^\ast+
(1-\epsilon)(t-T(\epsilon)) < \Phi(u(t)) < \Phi^\ast+(1+\epsilon)(t-T(\epsilon)), \quad t\geq T(\epsilon),
\end{equation*}
and recalling that $u(t)=F^{-1}(t)$, we have
\begin{equation}\label{eq.Phiut}
\Phi^{-1}(\Phi^\ast+
(1-\epsilon)(t-T(\epsilon)) )< F^{-1}(t) < \Phi^{-1}(\Phi^\ast+(1+\epsilon)(t-T(\epsilon))), \quad t\geq T(\epsilon).
\end{equation}
Applying Lemma \ref{Phi_inv_1} to the left and right--hand sides of \eqref{eq.Phiut} shows that 
\begin{gather*}
\liminf_{t\to\infty} \frac{\Phi^{-1}((1-\epsilon)t)}{\Phi^{-1}(t)}
\leq \liminf_{t\to\infty} \frac{F^{-1}(t)}{\Phi^{-1}(t)} \leq
\limsup_{t\to\infty} \frac{F^{-1}(t)}{\Phi^{-1}(t)} \leq \limsup_{t\to\infty} \frac{\Phi^{-1}((1+\epsilon)t)}{\Phi^{-1}(t)}. 
\end{gather*}
By Lemma~\ref{Phi_inv_2}, we have 
\[
\Phi^{-1}((1+\epsilon)t)<\frac{1}{1-\epsilon} \Phi^{-1}(t),
\]
so immediately we see that 
\[
\limsup_{t\to\infty} \frac{F^{-1}(t)}{\Phi^{-1}(t)} \leq \frac{1}{1-\epsilon},
\]
and letting $\epsilon\to 0^+$ gives
\begin{equation} \label{eq.limsupFPhiinv1}
\limsup_{t\to\infty} \frac{F^{-1}(t)}{\Phi^{-1}(t)} \leq 1.
\end{equation}
To deal with the liminf, write $y:=(1-\epsilon)t$ and $\eta:=(1-\epsilon)^{-1}-1$. Note that $\epsilon<1/2$ yields $\eta\in (0,1)$. Then by Lemma~\ref{Phi_inv_2} (with $\eta$ in the role of $\epsilon$), we get
\[
\frac{\Phi^{-1}((1-\epsilon)t)}{\Phi^{-1}(t)} = \frac{\Phi^{-1}(y)}{\Phi^{-1}((1+\eta)y)}>1-\eta=2-\frac{1}{1-\epsilon}.
\]
Hence 
\[
\liminf_{t\to\infty} \frac{F^{-1}(t)}{\Phi^{-1}(t)} \geq 2-\frac{1}{1-\epsilon}.
\]
Letting $\epsilon\to 0^+$ and combining the resulting inequality with \eqref{eq.limsupFPhiinv1} yields the desired limit.
\end{proof}

\begin{proof}[Proof of Theorem \ref{decreasing_con}]
The proof of the first claim is as before since no hypothesis regarding the asymptotic monotonicity of $f$ was used to show that the solution tends to $\infty$ as $t\to\infty$.

\emph{Step 1:} We first establish the required lower bound on the rate of growth of the solution. If $\epsilon>0$ is arbitrary, by hypothesis, there exists $x_1(\epsilon)$ such that for all $x > x_1(\epsilon)$, 
$
(1-\epsilon)\phi(x) < f(x) < (1+\epsilon)\phi(x).
$
Hence by part (\Rn{1}) there exists $T_1(\epsilon)$ such that for $t \geq T_1(\epsilon)$, $x(t) > x_1(\epsilon)$. Now let $T = T_1 + \tau + T_2$, where
\[
\int_{[0,T_2]} \mu_2(ds) > (1-\epsilon) \int_{[0,\infty)} \mu_2(ds), \quad t \geq T_2.
\]
Let $t \geq T(\epsilon)$, then $t-\tau \geq T_1$ and $x(t-s) > x_1(\epsilon)$ for $s \in [0,\tau]$. Hence 
$
f(x(t-s)) < (1+\epsilon)\phi(x(t-s)) < (1+\epsilon)\phi(x(t-\tau)).
$
Therefore
\begin{align}\label{est_1}
\int_{[0,\tau]} \mu_1(ds) f(x(t-s)) < (1+\epsilon)\int_{[0,\tau]} \mu_1(ds) \phi(x(t-\tau)), \quad t \geq T.
\end{align}
For $t \geq T(\epsilon)$ and $s \in [0,\tau]$,  
$
f(x(t-s)) \geq (1-\epsilon) \phi(x(t-s)) \geq (1-\epsilon)\phi(x(t)).
$
Thus
\begin{align}\label{est_2}
\int_{[0,\tau]} \mu_1(ds) f(x(t-s)) \geq (1-\epsilon) \int_{[0,\tau]} \mu_1(ds) \phi(x(t)).
\end{align}
Also, for $t \geq 2T$,
\[
\int_{[0,t]} \mu_2(ds) f(x(t-s)) \geq (1-\epsilon)\int_{[0,T]} \mu_2(ds)\phi(x(t-s)) 
\geq (1-\epsilon)\int_{[0,T]} \mu_2(ds) \phi(x(t)). 
\]
These estimates give us
\[
x'(t) \geq \left[(1-\epsilon)\int_{[0,\tau]} \mu_1(ds) + (1-\epsilon)\int_{[0,T]} \mu_2(ds) \right]\phi(x(t)) := M_\epsilon \phi(x(t)), \,\, t \geq 2T.
\]
Hence, defining $\Phi(x)$ as before and $\Phi_\epsilon := \Phi(x(2T))$, it can be shown by integration and rearrangement that
\begin{align}\label{est_3}
x(t) \geq \Phi^{-1}(\Phi_\epsilon + M_\epsilon(t-2T)), \quad t \geq 2T.
\end{align}
Hence
\[
\liminf_{t \to \infty}\frac{x(t)}{\Phi^{-1}(M_\epsilon t)} \geq \liminf_{t \to \infty}\frac{\Phi^{-1}(M_\epsilon t +\Phi_{T,\epsilon})}{\Phi^{-1}(M_\epsilon t)},
\]
where $\Phi_{T,\epsilon} = \Phi_\epsilon - 2T M_\epsilon$. Now for each fixed $\epsilon > 0$ we may apply Lemma \ref{Phi_inv_1} to obtain
$
\liminf_{t \to \infty}x(t)/\Phi^{-1}(M_\epsilon t) \geq 1.
$
We first note that by construction we have the inequalities 
$
(1-\epsilon)^2 M < M_\epsilon < M,
$
and thus $M_\epsilon \to M$ as $\epsilon \downarrow 0$.
Now consider
\[
\liminf_{t \to \infty}\frac{x(t)}{\Phi^{-1}(Mt)} = \liminf_{t \to \infty}\frac{x(t)}{\Phi^{-1}(M_\epsilon t)}\frac{\Phi^{-1}(M_\epsilon t)}{\Phi^{-1}(Mt)}.
\]
Letting $\theta = M_\epsilon t$ and $\lambda_\epsilon = M / M_\epsilon > 1$ we have
\[
\frac{\Phi^{-1}(M_\epsilon t)}{\Phi^{-1}(Mt)} = \frac{\Phi^{-1}(\theta)}{\Phi^{-1}(\lambda_\epsilon \theta)}> 2 - \lambda_\epsilon,
\]
where the final inequality is obtained using the estimate from Lemma \ref{Phi_inv_2} with $\lambda_\epsilon = 1 + \epsilon$.
As $\epsilon \downarrow 0, \,\, \lambda_\epsilon \to 1$ and hence we conclude that
$
\liminf_{t \to \infty}x(t)/\Phi^{-1}(Mt) \geq 1.
$
The asymptotic equivalence of $f$ and $\phi$ then yields
\[
\liminf_{t \to \infty}\frac{x(t)}{F^{-1}(Mt)} \geq 1.
\]
\emph{Step 2:} We now proceed to supply the corresponding upper bound. $\epsilon>0$ is once again arbitrary and using \eqref{est_1} we have
\begin{align*}
x'(t) &< (1+\epsilon) \int_{[0,\tau]} \mu_1(ds) \phi(x(t-\tau)) 
+ \int_{[0,t-2T]} \mu_2(ds) f(x(t-s)) \\
&\qquad+ \int_{(t-2T,t]} \mu_2(ds) f(x(t-s)), \quad t \geq 2T.
\end{align*}
Now using the monotonicity of $\phi$ and \eqref{est_3} we arrive at
\begin{align}\label{est_4}
x'(t) &\leq (1+\epsilon) \int_{[0,\tau]} \mu_1(ds) \phi(x(t-\tau)) + (1+\epsilon)\int_{[0,t-2T]} \mu_2(ds) \phi(x(t-s)) \nonumber\\
&\qquad + \int_{(t-2T,t]} \mu_2(ds) f(x(t-s)) \nonumber \\
&\leq (1+\epsilon) \int_{[0,\tau]} \mu_1(ds) (\phi \circ \Phi^{-1})(\Phi_\epsilon + M_\epsilon(t - \tau - 2T)) \nonumber\\
&\qquad+ \int_{(t-2T,t]} \mu_2(ds) f(x(t-s)) \nonumber \\ &\qquad\qquad+ (1+\epsilon) \int_{[0,t-2T]} \mu_2(ds) (\phi \circ \Phi^{-1})(\Phi_\epsilon + M_\epsilon(t - s - 2T)) \nonumber \\
&=: a_1(t) + a_2(t) + a_3(t), \quad t \geq 3T.\end{align}
Hence integration yields 
$
x(t) \leq x(3T) + \int_{3T}^t \{a_1(s) + a_2(s) + a_3(s)\}\,ds.
$
We estimate the first term as follows
\begin{align*}
\int_{3T}^t a_1(s)ds &= (1+\epsilon)\int_{[0,\tau]} \mu_1(ds) \int_{3T}^t (\phi \circ \Phi^{-1})(\Phi_\epsilon + M_\epsilon(r-\tau-2T))\,dr \\
&= \frac{(1+\epsilon)\mu_1}{M_\epsilon}\int_{\Phi_\epsilon + M_\epsilon(T-\tau)}^{\Phi_\epsilon + M_\epsilon(t-\tau-2T)}(\phi \circ \Phi^{-1})(u)\,du \\
&=\frac{(1+\epsilon)\mu_1}{M_\epsilon}\int_{\Phi^{-1}(\Phi_\epsilon + M_\epsilon(T-\tau))}^{\Phi^{-1}(\Phi_\epsilon + M_\epsilon(t-\tau-2T))} dv \\
&\leq \frac{(1+\epsilon)\mu_1}{M_\epsilon} \left[\Phi^{-1}(\Phi_\epsilon + M_\epsilon(t-\tau-2T)) \right], \quad t \geq 3T.
\end{align*}
The second term can be estimated as follows
\[
a_2(t) = \int_{(t - 2T,t]}\mu_2(ds)f(x(t-s)) \leq \int_{(t-2T,t]} \mu_2(ds) \cdot \sup_{u \in[0,2T]}f(x(u)).
\]
Integrating and changing the order of integration then yields
\begin{align*}
\int_{3T}^t a_2(s)\,ds &\leq \int_{3T}^t\int_{(s-2T,s]}\mu_2\,(dr)ds \sup_{u \in[0,2T]}f(x(u)) \\
&=\int_{[T,t]} \{t \wedge(2T+r) - (3T \vee r)\}\mu_2(dr) \sup_{u \in[0,2T]}f(x(u)).
\end{align*}
We then take cases and find that this estimate can be reduced to 
\[
\int_{3T}^t a_2(s)\,ds \leq 2T \mu_2 \sup_{u \in[0,2T]}f(x(u)) := A_T, \quad t \geq 3T.
\]
The last term is then estimated as follows
\begin{align*}
\int_{3T}^t a_3(s)ds &= (1+\epsilon)\int_{3T}^t\int_{[0,s-2T]}\mu_2(du)(\phi \circ \Phi^{-1})(\Phi_\epsilon+ M_\epsilon(s-u-2T))\,ds \\
&= (1+\epsilon)\int_{T}^{t-2T}\int_{[0,w]}\mu_2(du)(\phi \circ \Phi^{-1})(\Phi_\epsilon+ M_\epsilon(w-u))\,dw \\
&= (1+\epsilon)\int_{[0,t-2T]}\mu_2(dw) \int_{u \wedge T}^{t-2T}(\phi \circ \Phi^{-1})(\Phi_\epsilon+ M_\epsilon(w-u))\,du \\
&= (1+\epsilon)\int_{[0,T]}\mu_2(dw) \int_{T-u}^{t-2T-u}(\phi \circ \Phi^{-1})(\Phi_\epsilon+ M_\epsilon s)\,ds \\
&+ (1+\epsilon)\int_{(T,t-2T]}\mu_2(dw) \int_{0}^{t-2T-u}(\phi \circ \Phi^{-1})(\Phi_\epsilon+ M_\epsilon s)\,ds \\
&\leq (1+\epsilon)\mu_2 \int_0^{t-2T}(\phi \circ \Phi^{-1})(\Phi_\epsilon + M_\epsilon s)\,ds.
\end{align*}
Finally, a rearrangement of the type performed in the calculation of $\int_{3T}^t a_1(s)ds$, simplifies this estimate to
\[
\int_{3T}^t a_3(s)ds \leq \frac{(1+\epsilon)\mu_2}{M_\epsilon}\left[ \Phi^{-1}(\Phi_\epsilon + M_\epsilon(t-2T)) \right], \quad t \geq 3T.
\]
Combining these three estimates we arrive at
\[
x(t) \leq x(3T) + A_T + \frac{(1+\epsilon)(\mu_1 + \mu_2)}{M_\epsilon}\Phi^{-1}(\Phi_\epsilon + M_\epsilon(t-2T)), \quad t \geq 3T.
\]
Hence
\begin{align*}
\limsup_{t \to \infty}\frac{x(t)}{\Phi^{-1}(Mt)} &\leq  \frac{(1+\epsilon)M}{M_\epsilon}\limsup_{t \to \infty}\frac{\Phi^{-1}(\Phi_\epsilon + M_\epsilon(t-2T))}{\Phi^{-1}(Mt)}.
\end{align*}
We now note that the arguments for the limsup in Lemma \ref{Phi_inv_1} work for the limsup above since $M_\epsilon<M$ and hence
$
\limsup_{t \to \infty}x(t)/\Phi^{-1}(Mt) \leq  (1+\epsilon)M/M_\epsilon.
$
Therefore we may send $\epsilon \downarrow 0$ and the same arguments as before yield
\[
\limsup_{t \to \infty} \frac{x(t)}{F^{-1}(Mt)} \leq 1.
\]
Combining this with our lower bound gives the desired conclusion.
\end{proof}
\section{Proofs of Results with Regularly Varying Nonlinearity} \label{sec.regvarproofs}
Our final section of proofs concern results in which it is assumed that $f$ is regularly varying at infinity.
\begin{proof}[Proof of Theorem \ref{RV_0}]
As before $z(t) \to \infty$ as $t \to \infty$. From \eqref{DDE} we have
\[
\frac{z'(t)}{z(t)} = \int_{[0,\tau]} \mu_1(ds) \frac{f(z(t-s))}{z(t-s)}\frac{z(t-s)}{z(t)} 
\leq \int_{[0,\tau]} \mu_1(ds) \frac{f(z(t-s))}{z(t-s)},
\]
where we have used the fact that $z(t)$ is strictly increasing for $t$ sufficiently large. Since $\lim_{x \to \infty}f(x)/x = 0$ there exists $x_1(\epsilon)$ such that for all $x>x_1(\epsilon)$ we have $f(x)/x < \epsilon$, for some arbitrary $\epsilon \in (0,1/2)$. Similarly, there exists $T(\epsilon)$ such that for all $t \geq T(\epsilon)$, $z(t) > x_1(\epsilon)$. Hence, for all $t \geq T^* := T(\epsilon)+\tau$, we have
\[
\frac{z'(t)}{z(t)} \leq \epsilon \int_{[0,\tau]} \mu_1(ds).
\]
Therefore $\limsup_{t \to \infty}z'(t)/z(t) = 0$. But since $z'(t)>0$ for $t$ sufficiently large and $z(t)>0$ for all $t \geq 0$ we also have $\liminf_{t \to \infty}z'(t)/z(t)\geq 0$ and hence $\lim_{t \to \infty}z'(t)/z(t) = 0$. Writing this as an $\epsilon-\delta$ statement it quickly follows that
\[
\lim_{t \to \infty}\sup_{s \in [0, T_1(\epsilon)]}\left|\frac{z(t-s)}{z(t)} -1 \right|=0,
\]
for some $T_1(\epsilon)$. Thus there exists $T_2(\eta,\epsilon)$ such that for all $t \geq T_2(\eta,\epsilon)$ we have
\[
\sup_{s \in [0,T_2(\eta,\epsilon)]}\left|\frac{z(t-s)}{z(t)} -1 \right| < \eta.
\] 
Therefore 
$
1-\eta < z(t-s)/z(t) < 1, \,\, s \in [0,T_1(\epsilon)], \,\, t \geq T_2(\eta,\epsilon).
$
Taking $\eta = \epsilon$ we have 
$
\lambda_{t,s} := z(t-s)/z(t) \in [1-\epsilon,1],
$
for all $s \in [0,T_1(\epsilon)]$ and $t \geq T_2$. Thus for all $t \geq T_2$
\begin{align*}
\sup_{s \in [0,T_1]}\left|\frac{f(z(t-s))}{f(z(t))} - 1 \right| &= \sup_{s \in [0,T_1]}\left|\frac{f(\lambda_{t,s}z(t))}{f(z(t))} - 1 \right| \\
&\leq \sup_{\lambda \in [1-\epsilon,1]}\left|\frac{f(\lambda z(t))}{f(z(t))} - 1 \right|
\leq \sup_{\lambda \in [0,1/2]}\left|\frac{f(\lambda z(t))}{f(z(t))} - 1 \right|.
\end{align*} 
Since $f \in \text{RV}_\infty(0)$ the Uniform Convergence Theorem for slowly varying functions (cf. \cite[Theorem 1.2.1]{BGT}) allows us to now conclude that 
\[
\lim_{t \to \infty}\sup_{\lambda \in [0,1/2]}\left|\frac{f(\lambda z(t))}{f(z(t))} - 1 \right|=0,
\]
and hence
\begin{align}\label{unilim}
\lim_{t \to \infty}\sup_{s \in [0,T_1]}\left|\frac{f(z(t-s))}{f(z(t))} - 1 \right|=0.
\end{align}
Returning to \eqref{DDE} we now use this to estimate
\[
\frac{z'(t)}{f(z(t))} = \int_{[0,\tau]} \mu_1(ds)\frac{f(z(t-s))}{f(z(t))}.
\]
From \eqref{unilim} we have that
$
1-\epsilon < f(z(t-s))/f(z(t)) < 1+\epsilon,\,\, s \in [0,\tau], \,\, t \geq T_3(\epsilon).
$
Thus for all $t \geq T_3(\epsilon)$
\[
(1-\epsilon)M < \frac{z'(t)}{f(z(t))} < (1+\epsilon)M,
\]
and hence $\lim_{t \to \infty}z'(t)/f(z(t)) = M$. Once more asymptotic integration yields 
\[
\lim_{t \to \infty}\frac{F(z(t))}{Mt}=1.
\]
Therefore for all $\epsilon\in (0,1)$ there exists $T(\epsilon)$ such that for all $t \geq T(\epsilon)$, $Mt(1-\epsilon)<F(z(t))<Mt(1+\epsilon)$. Hence
\[
\frac{F^{-1}(Mt(1-\epsilon))}{F^{-1}(Mt)} < \frac{z(t)}{F^{-1}(Mt)} < \frac{F^{-1}(Mt(1+\epsilon))}{F^{-1}(Mt)}, \quad t \geq T(\epsilon).
\]
Now we note that since $F^{-1}\in \text{RV}_\infty(1)$ sending $t \to \infty$ yields
\[
1-\epsilon\leq \liminf_{t\to\infty}\frac{z(t)}{F^{-1}(Mt)}\leq \limsup_{t\to\infty} \frac{z(t)}{F^{-1}(Mt)}\leq 1+\epsilon. 
\]
Sending $\epsilon \to 0^+$ then gives the result.
\end{proof}
\begin{proof}[Proof of Theorem \ref{RV_0_2}]
By hypothesis there exist positive real numbers $\underline{f}$ and $\bar{f}$ such that $\underline{f} < f(x)< \bar{f}$ for all $x>0$. Hence, for any $t \geq T_2>0$, we have
\begin{align*}
v'(t) &= \int_{[0,T_2]}\mu_2(ds)f(v(t-s))+\int_{(T_2,t]}\mu_2(ds)f(v(t-s))\\
&\leq \int_{[0,T_2]}\mu_2(ds)f(v(t-s))+ \int_{(T_2,t]}\mu_2(ds)\bar{f}.
\end{align*}
Since $f(v(t))>\underline{f}>0$ for all $t \geq 0$, we have
\[
\frac{v'(t)}{f(v(t))} \leq \frac{\bar{f}}{\underline{f}}\int_{[T_2,\infty)}\mu_2(ds) 
+ \frac{1}{f(v(t))}\int_{[0,T_2]}\mu_2(ds)f(v(t-s)).
\]
By the arguments from Theorem \ref{RV_0}, $\lim_{t \to \infty}v'(t)/v(t)=0$ and 
\[
\lim_{t \to \infty}\sup_{0 \leq s \leq T_2}\left| \frac{f(v(t-s))}{f(v(t))}-1 \right|=0.
\]
Hence there exists $T_3(\epsilon)$ such that for all $t \geq T_3(\epsilon)$, $f(v(t-s))/f(v(t))<1+\epsilon$, for all $s \in [0,T_2(\epsilon)]$. Now if $T_2(\epsilon)$ is large enough that $\int_{[t,\infty)}\mu(ds)<\epsilon$ for all $t \geq T_2$, take $T:= T_2(\epsilon)+T_3(\epsilon)$. Thus for all $t \geq T$,
\begin{align*}
\frac{v'(t)}{f(v(t))} &< \epsilon\, \frac{\bar{f}}{\underline{f}}+ (1+\epsilon)\int_{[0,T_2]}\mu_2(ds) 
\leq \epsilon\, \frac{\bar{f}}{\underline{f}}+ (1+\epsilon)M.
\end{align*}
Therefore letting $t \to \infty$ and $\epsilon \to 0^+$ yields $\limsup_{t \to \infty}v'(t)/f(v(t)) \leq M$. The usual considerations and using that $F^{-1}\in \text{RV}_\infty(1)$ then give us the upper bound $\limsup_{t \to \infty}v(t)/F^{-1}(Mt)\leq 1.$ We defer the calculation of the required lower bound to the next Theorem.
\end{proof}
\begin{proof}[Proof of Theorem \ref{RV_0_lower_bound}]
Monotonicity of the solution means that for all $s \in [0,t]$, $v(t-s)/v(t) \leq 1$. Hence, with $T \in [0,t]$ arbitrary, we have
\begin{align*}
\frac{v'(t)}{v(t)} &= \int_{[0,t]} \mu_2(ds) \frac{f(v(t-s))}{v(t-s)}\frac{v(t-s)}{v(t)} \\
&\leq \int_{[0,t-T]}\mu_2(ds) \frac{f(v(t-s))}{v(t-s)} + \int_{(t-T,t]} \mu_2(ds) \frac{f(v(t-s))}{v(t-s)}.
\end{align*}
Now noting that $f(x)/x \to 0$ as $x \to \infty$ and that the measure is finite we may conclude that $\limsup_{t \to \infty}v'(t)/v(t)=0$. Therefore $\lim_{t \to \infty}v'(t)/v(t)=0$. As in the previous Corollary this quickly yields
\[
\lim_{t \to \infty}\sup_{s \in [0, T_1(\epsilon)]}\left|\frac{v(t-s)}{v(t)} -1 \right|=0.
\]
If $\epsilon \in (0,1)$ be arbitrary, by hypothesis, $\int_{[0,T(\epsilon)]}\mu(ds) \geq (1-\epsilon)M$ for some $T(\epsilon)$. Hence
\[
v'(t) \geq \int_{[0,T(\epsilon)]}\mu_2(ds) f(v(t-s)).
\]
As before regular variation can be used to show that 
\[
\lim_{t \to \infty} \sup_{s \in [0,T(\epsilon)]}\left| \frac{f(v(t-s))}{f(v(t))} - 1 \right| = 0.
\]
Therefore we have that
$
v'(t) \geq (1-\epsilon)^2 M f(v(t))
$
for all $t \geq T^*(\epsilon)$, for some $T^*(\epsilon)$. At this point the usual calculation reveals that
$
\liminf_{t \to \infty}F(v(t))/Mt \geq 1.
$
As before the fact that we have $F^{-1} \in \text{RV}_\infty(1)$ means that we can quickly improve this to 
\[
\liminf_{t \to \infty}\frac{v(t)}{F^{-1}(Mt)} \geq 1,
\]
completing the proof.
\end{proof}
\begin{proof}[Proof of Theorem \ref{RV_0_upper}]
Since $f\in \text{RV}_\infty(0)$ we have $\lim_{x\to\infty}f(x)/x=0$. Therefore there exists $X(\epsilon)$ such that for all $\epsilon>0$, $x^{-\epsilon}<f(x)<x^\epsilon$ for all  $x>X(\epsilon)$. SInce $\lim_{t\to\infty}v(t)=\infty$ there exists $T(\epsilon)$ such that for all $t \geq T(\epsilon)$ we have $v(t) > X(\epsilon)$ and hence 
\begin{align*}
v'(t) &= \int_{[0,t-T]}\mu_2(ds)f(v(t-s)) + \int_{(t-T,t]} \mu_2(ds)f(v(t-s)) \\
&\leq \int_{[0,t-T]}\mu_2(ds)v(t-s)^\epsilon + \int_{(t-T,t]} \mu_2(ds)f(v(t-s)).
\end{align*}
Letting $h(t)=\int_{(t-T,t]} \mu_2(ds)f(v(t-s))$, $h(t)\leq \int_{[t-T,t]} \mu_2(ds)\sup_{s\in [0,T]}f(v(s))$. Thus
$v'(t) \leq M v(t)^\epsilon + h(t), \,\, t \geq T(\epsilon)$. Therefore
\[
\frac{v'(t)}{v(t)^\epsilon} \leq M + \frac{h(t)}{v(t)^\epsilon}, \quad t \geq T(\epsilon).
\]
Since $h(t)\to 0$ as $t \to \infty$ taking the limsup yields 
$\limsup_{t\to\infty}v'(t)/v(t)^\epsilon \leq M$. Asymptotic integration of this inequality gives us 
\[
v(t)^{1-\epsilon} \leq (1-\epsilon)M(1+\epsilon)(t-T_1) + v(T_1)^{1-\epsilon}, \quad t \geq T_1,
\]
for an arbitrary $\epsilon \in (0,1)$ and some $T_1(\epsilon) > 0$. Taking logarithms and sending $t\to\infty$ and then $\epsilon \to 0$ we obtain $\limsup_{t\to\infty}\log(v(t))/\log(t) \leq 1$. $f\in \text{RV}_\infty(0)$ implies that $F \in \text{RV}_\infty(1)$ and hence $\lim_{x\to\infty}\log(F(x))/\log(x)=1$. Using the lower bound from Theorem \ref{RV_0_lower_bound} there exists $T_2$ such that for all $\epsilon \in (0,1)$ we have $v(t) > F^{-1}(M(1-\epsilon)t),\,\, t \geq T_2$. Similarly, $\log(v(t))/\log(t) \geq \log(F^{-1}(M(1-\epsilon)t))/\log(t)$. Taking the liminf then gives us that
\[
\liminf_{t\to\infty}\frac{\log(v(t))}{\log(t)} \geq \liminf_{t\to\infty}\frac{\log(F^{-1}(t))}{\log(t)} = 1.
\]
Combining the upper and lower bounds gives the desired result.
\end{proof}
\section{Justification of Examples}\label{sec_exp}
In this section we provide the relevant details to support the examples discussed in Section \ref{sec.examples}. The calculations for both examples are identical except for the final few steps where differing hypotheses are employed.
We begin by stating some formulae which are derived by integrating \eqref{f_prime}.
For $n \in \mathbb{N}$ and $x \in [n, n+ w_n/2)$
\begin{align}\label{f_1}
f(x) = f(n) + (x-n)\eta(n) + \frac{h_n - \eta(n)}{w_n}(x-n)^2.
\end{align} 
Hence $f(n + w_n/2) = f(n) + w_n \eta(n)/4 + (h_n w_n)/4$.
For $n \in \mathbb{N}$ and $x \in (n+ w_n/2,\, w_n+n]$
\begin{align}\label{f_2}
f(x) = f(n+ \tfrac{w_n}{2}) + h_n \left(x-n-\tfrac{w_n}{2}\right) + \frac{\eta(n+w_n) - h_n}{w_n}\left(x-n - \tfrac{w_n}{2}\right)^2.
\end{align}
Therefore $f(n+w_n) = f(n+ w_n/2) + (h_n w_n)/2 + (w_n/4)\left(\eta(n)+\eta(n+w_n)\right)$ . Finally for $x \in (n+w_n,n+1)$
\begin{align}\label{f_3}
f(x) = f(n+w_n) + \int_{n+w_n}^x \eta(u)du.
\end{align}
It follows that 
\begin{align}\label{f_n+1}
f(n+1) = f(n) + \frac{h_n w_n}{2} + \frac{w_n}{4}\left(\eta(n)+\eta(n+w_n)\right) + \int_{n+w_n}^x \eta(u)du.
\end{align}
Hence it can be shown that 
\begin{align}\label{f_sum_form}
f(n+1) = f(n) + \sum_{j=1}^n \left\{\frac{h_j w_j}{2} + \frac{w_j}{4}\left(\eta(j)+\eta(j+w_j)\right) + \int_{w_j+j}^{j+1}\eta(u)du \right\}.
\end{align}
\subsection{Example \ref{first_example}}
By hypothesis $\phi$ grows more quickly than the sums of $\eta(j)w_j$ and $h_j w_j$ so we only need to study the asympotics of the final term of \eqref{f_sum_form}. For $n \in \mathbb{N}$, 
\[
S_n := \sum_{j=1}^n \int_{j+w_j}^{j+1}\eta(u)du \leq \sum_{j=1}^n \int_{j}^{j+1}\eta(u)du.
\]
Thus $S_n \leq \int_1^{n+1}\eta(u)du$. \eqref{f_n+1} can be rewritten as
\begin{align}\label{f_n_ineq}
f(n+1) \leq \int_0^{n+1}\eta(u)du + T_n = \phi(n+1) +T_n,
\end{align}
 where $T_n/\phi(n) \to 0$ as $n\to\infty$. Similarly,
\[
S_n = \int_1^{n+1}\eta(u)du - \sum_{j=1}^n \int_j^{j+w_j}\eta(u)du.
\]
Since $\eta$ is decreasing $\sum_{j=1}^n \int_j^{j+w_j}\eta(u)du \leq \sum_{j=1}^n w_j \eta(j)$ and we have the estimate
\[
S_n \geq \int_1^{n+1}\eta(u)du - \sum_{j=1}^n w_j \eta(j).
\]
Hence \eqref{f_n+1} becomes
\[
f(n+1) \geq \phi(n+1)- \sum_{j=1}^n w_j \eta(j) + T_n,
\]
where $T_n/\phi(n) \to 0$ as $n\to\infty$. Combining our upper and lower estimates for $f(n+1)$ yields $\lim_{n\to\infty}f(n+1)/\phi(n+1)=1$. Since $\lim_{x\to\infty}\eta(x)=0$
\[
\frac{\phi(n+1)-\phi(n)}{\phi(n)} = \frac{\int_n^{n+1}\eta(u)du}{\phi(n)} \leq \frac{\eta(n)}{\phi(n)} \to 0 \mbox{ as }n\to\infty.
\]
Hence $\lim_{n\to\infty}\phi(n+1)/\phi(n) = 1$. Thus for any $x \in [n(x),n(x)+1)$ our previous arguments show that (suppressing $x$--dependence in $n$)
\[
\frac{f(x)}{\phi(x)} \leq \frac{f(n+1)}{\phi(n)} = \frac{f(n+1)}{\phi(n+1)}\frac{\phi(n+1)}{\phi(n)} \to 1 \mbox{ as } x\to\infty.
\]
Likewise 
\[
\frac{f(x)}{\phi(x)} \geq \frac{f(n)}{\phi(n+1)} = \frac{f(n)}{\phi(n)}\frac{\phi(n)}{\phi(n+1)} \to 1 \mbox{ as } x\to\infty.
\]
Note that $\lim_{x\to\infty}\phi'(x)=0$ since $\lim_{x\to\infty}\eta(x)=0$ by hypothesis and hence by Lemma \ref{to_zero} $\phi$ is sublinear. Therefore $f$ is also sublinear. \\\\*
We have chosen $h_n$ so that $h_n > \eta(n)$ for each $n$ and $f'(n+w_n/2)=h_n$. Hence
\[
\limsup_{x\to\infty}f'(x) \geq \lim_{n\to\infty}f'(n+\tfrac{w_n}{2}) = \lim_{n\to\infty}h(n) = L.
\]
Also $\lim_{x\to\infty}\eta(x)=0$ implies that $\liminf_{x\to\infty}f'(x)=0$.
\subsection{Example \ref{second_example}}
All of the arguments from Example \ref{first_example} also apply here with minor changes. In \eqref{f_n_ineq} we now have $T_n \to \bar{L} \in (0,\infty)$ and we can proceed as before. 

\bibliography{references}
\bibliographystyle{abbrv}
\end{document}